\journalname{Numerische Mathematik}
\begin{document}
\title{A posteriori error estimates in $\mathbf{W}^{1,p} \times \mathrm{L}^p$ spaces for the Stokes system with Dirac measures 
\thanks{FL is partially supported by CONICYT through  FONDECYT Postdoctoral project 3190204. EO is partially supported by CONICYT through FONDECYT project 11180193. DQ is partially supported by USM through Programa de Incentivos a la Iniciaci\'on Cient\'ifica (PIIC)}}

\titlerunning{Error estimates for the Stokes system with Dirac measures}        

\author{Francisco Fuica \and Felipe Lepe \and Enrique Ot\'arola \and Daniel Quero
}


\institute{F. Fuica \at
              Departamento de Matem\'atica, Universidad T\'ecnica Federico Santa Mar\'ia, Av. Espa\~{n}a 1680, Valpara\'iso, Chile. \\
              \email{francisco.fuica@sansano.usm.cl}          
           \and
           F. Lepe \at
 			 Departamento de Matem\'atica, Universidad T\'ecnica Federico Santa Mar\'ia, Av. Espa\~{n}a 1680, Valpara\'iso, Chile. \\
 			 \email{felipe.lepe@usm.cl}  
 		   \and
           E. Ot\'arola \at
 			 Departamento de Matem\'atica, Universidad T\'ecnica Federico Santa Mar\'ia, Av. Espa\~{n}a 1680, Valpara\'iso, Chile. \\
 			 \email{enrique.otarola@usm.cl} 
 		   \and
           D. Quero \at
 			 Departamento de Matem\'atica, Universidad T\'ecnica Federico Santa Mar\'ia, Av. Espa\~{n}a 1680, Valpara\'iso, Chile. \\
 			 \email{daniel.quero@alumnos.usm.cl} 
}

\date{Received: date / Accepted: date}

\maketitle

\begin{abstract}
We design and analyze a posteriori error estimators for the Stokes system with singular sources in suitable $\mathbf{W}^{1,p}\times \mathrm{L}^p$ spaces. We consider classical low-order inf-sup stable and stabilized finite element discretizations. We prove, in two and three dimensional Lipschitz, but not necessarily convex polytopal domains, that the devised error estimators are reliable and locally efficient. On the basis of the devised error estimators, we design a simple adaptive strategy that yields optimal experimental rates of convergence for the numerical examples that we perform.
\keywords{Stokes equations \and a posteriori error estimates \and Dirac measures \and adaptive finite elements}
\subclass{35Q35 \and 76D07 \and 35R06 \and 65N15 \and 65N50 \and 76M10}
\end{abstract}

\section{Introduction}\label{sec:intro}
For $d\in\{2,3\}$, we let $\Omega$ be an open and bounded polytopal domain in $\mathbb{R}^d$ with Lipschitz boundary $\partial\Omega$. The purpose of this work is the design and analysis of a posteriori error estimators for classical low-order inf-sup stable and stabilized finite element approximations of the Stokes problem
\begin{equation}\label{def:Stokes_singular_rhs}
\left\{
\begin{array}{rcll}
-\Delta\boldsymbol{u}+\nabla \pi & = & \boldsymbol{f}\delta_{x_0} & \text{ in } \quad \Omega, \\
\text{div }\boldsymbol{u} & = & 0 & \text{ in } \quad \Omega, \\
\boldsymbol{u} & = & \mathbf{0} & \text{ on } \quad \partial\Omega,
\end{array}
\right.
\end{equation}where $\delta_{x_0}$ corresponds to the Dirac delta supported at the interior point $x_0 \in \Omega$ and $\boldsymbol{f}\in\mathbb{R}^d$. As it is customary in fluid mechanics, $\boldsymbol{u}$ represents the velocity of the fluid, $\pi$ the pressure and $\boldsymbol{f}\delta_{x_0} $ is an externally applied force. Notice that, for simplicity, we have taken the viscosity to be equal to one. An instance of \eqref{def:Stokes_singular_rhs} appears in the modeling of active thin structures \cite{LACOUTURE2015187,FULFORD1986381}; there the right hand side is a linear combination of Dirac deltas supported at interior points of $\Omega$. We also mention other applications such as the use of flagella by sessile organisms to generate feeding currents \cite{higdon79}, modeling the flow of a fluid through structures with singular sources \cite{liron78,hasimoto59}, slender body theories \cite{stephen81}, improved models for the movement by cilia \cite{blake_1972}, and optimal control of fluid flows \cite{2019arXiv190711096F,2018arXiv181002415A}.


When the body force acting on the fluid and the mass production rate are smooth, the study of solution techniques for the Stokes and related models within a standard Hilbert space--based setting is well understood \cite{MR851383,MR2050138}. However, recent models have emerged where the motion of an incompressible fluid is described by problem \eqref{def:Stokes_singular_rhs} or a small variation of it.  Due to the singular nature of the body force $\boldsymbol{f}\delta_{x_0}$, the problem must be understood in a completely different setting where the analysis of approximation techniques is scarce. Since $\Omega$ is a Lipschitz polytope, the fact that $\delta_{x_0}\in \mathrm{W}^{-1,p}(\Omega)$, with $p \in (1,d/(d-1))$, yields the existence of a unique solution $(\boldsymbol{u},\pi)\in \mathbf{W}^{1,p}(\Omega)\times \mathrm{L}^p(\Omega)/\mathbb{R}$ with $p\in(2d/(d+1)-\varepsilon,d/(d-1))$ \cite{Larsson_Svensson,MR2987056,MR1386766}. Here, $\varepsilon$ denotes a positive constant that depends on $\Omega$. For a complete treatment of boundary value problems for the Stokes system on Lipschitz domains we refer the reader to \cite{MR2987056}, where the authors prove optimal well--posedness results in all space dimensions and for all major types of boundary conditions.

Regarding the design and analysis of solution techniques for problem \eqref{def:Stokes_singular_rhs}, and to the best of our knowledge, the first work that proposed an scheme is \cite{LACOUTURE2015187}. Later, the authors of \cite{MR3854357} derived quasi--optimal local convergence results in $\mathbf{H}^1 \times \mathrm{L}^2$.  The authors operated under the assumption that the underlying domain $\Omega \subset \mathbb{R}^2$ is an open and bounded $C^{\infty}$ domain, or a square, and considered finite element discretizations based on the mini element and Taylor--Hood approximations. The error is analyzed on a subdomain which does not contain the singularity of the involved solution. On the other hand, in view of the fact that there is a Muckenhoupt weight $\omega$ related  to the distance to $x_0$ such that $\delta_{x_0} \in \mathrm{H}^{-1}(\omega,\Omega)$, the authors of \cite{2019arXiv190500476D,MR3892359} have operated within a weighted Sobolev space setting and derived a priori and a posteriori error estimates for classical low--order inf--sup stable finite element approximations.

Since $\delta_{x_{0}}$ is very singular, it is not expected for the pair $(\boldsymbol{u},\pi)$, solution to \eqref{def:Stokes_singular_rhs}, to have any global regularity properties beyond those 
inherited from the well--posedness of the problem.
As a consequence, optimal error estimates for classical low--order inf--sup stable finite element approximations, such as the mini element and the lowest order Taylor--Hood element, cannot be expected. This motivates the design and analysis of adaptive finite element methods (AFEMs) for the efficient resolution of problem \eqref{def:Stokes_singular_rhs} since they are known to outperform classical FEM in practice and deliver optimal convergence rates when FEM cannot. AFEMs are a fundamental numerical tool in science and engineering that allow for the resolution of PDEs with relatively modest computational resources. An essential ingredient of an AFEM is an a posteriori error estimator. This is a computable quantity that depends on the discrete solution and data, and provides information about the local quality of the approximate solution. Therefore, it can be used for adaptive mesh refinement and coarsening, error control, and equidistribution of the computational effort. The a posteriori error analysis for linear second-order elliptic boundary value problems has attained a mature understanding \cite{MR1885308,MR2648380,MR3059294}.

In contrast to the well-established theory for linear elliptic PDEs with smooth data, the a posteriori error analysis for finite element approximations of problems with singular forcing has not yet been fully understood. The main source of difficulty is the reduced regularity properties exhibited by the underlying solution. Within this context, the first work that provides an a posteriori error analysis for a finite element approximation of a Poisson problem with a Dirac delta as a forcing term is \cite{MR2262756}. The authors of this work utilize suitable $\mathrm{W}^{1,p}$--norms and design, on a two dimensional setting, residual--type a posteriori error estimators. The devised estimators are proven to be reliable and locally efficient. We would like to also mention reference \cite{MR3237857} where the authors consider a posteriori error estimates for an electrostatics problem with a current dipole source and extend some of the results of \cite{MR2262756} to the three dimensional case. This is a singular problem, since the current dipole model involves first--order derivatives of a Dirac delta measure.

To the best of our knowledge, the only work that provides an advance concerning the a posteriori error analysis for the Stokes system \eqref{def:Stokes_singular_rhs} is \cite{MR3892359}. In such a work, the authors propose a posteriori error estimators for classical low--order inf--sup stable and stabilized finite element approximations of the Stokes problem \eqref{def:Stokes_singular_rhs} in two and three dimensional Lipschitz polytopal domains. The authors operate within the setting of Muckenhoupt weighted Sobolev spaces and prove that the devised error estimators are reliable and locally efficient. In contrast, in this work we operate under a complete different setting; we make use of the fact that, since $\Omega \subset \mathbb{R}^d$ is Lipschitz $(d \in \{2,3\})$, there exists $\varepsilon >0$ such that $(\boldsymbol{u},\pi)\in \mathbf{W}^{1,p}(\Omega)\times \mathrm{L}^p(\Omega)/\mathbb{R}$ with $p\in(2d/(d+1)-\varepsilon,d/(d-1))$ and devise a posteriori error estimator based on $\mathrm{L}^p$--norms. We consider the classical saddle point formulation of \eqref{def:Stokes_singular_rhs} and propose approximations based on popular low--order inf--sup stable and stabilized finite elements. For all these schemes, we devise a posteriori error estimators that are proven to be globally reliable and locally efficient when the approximation error is measured in suitable $\mathbf{W}^{1,p}(\Omega) \times \mathrm{L}^p(\Omega)$--norms. With the proposed estimators at hand, we also design simple adaptive strategies that yield optimal rates of convergence for the numerical examples that we perform.

The outline of this manuscript is as follows. In Section \ref{sec:notation_and_preliminaries} we introduce the notation and functional framework we shall work with. In Section \ref{sec:model_problem} we present a saddle point formulation for the Stokes system \eqref{def:Stokes_singular_rhs}. We also review the well--posedness of the system and state regularity properties of its solution. In Section \ref{sec:fem} we introduce classical low--order inf--sup stable finite element approximations of \eqref{def:Stokes_singular_rhs}. The core of our work is Section \ref{sec:a_posteriori_estimates}, where we design a posteriori error estimators and obtain global reliability and local efficiency results. We extend, in Section \ref{sec:stabilized}, the results obtained in Section \ref{sec:a_posteriori_estimates} to the case when stabilized finite element approximations are considered. Finally, in Section \ref{sec:numericos}, we report numerical tests, in two and three dimensions, that illustrate the theory and exhibit the performance of the devised estimators.


\section{Notation and preliminaries}
\label{sec:notation_and_preliminaries}
Let us set notation and describe the setting we shall operate with.

Throughout this work, $d\in\{2,3\}$ and $\Omega$ is an open and bounded polytopal domain  of $\mathbb{R}^{d}$ with Lipschitz boundary $\partial \Omega$. If $\mathscr{X}$ and $\mathscr{Y}$ are normed vector spaces, we write $\mathscr{X} \hookrightarrow \mathscr{Y}$ to denote that $\mathscr{X}$ is continuously embedded in $\mathscr{Y}$. We denote by $\mathscr{X}'$ and $\|\cdot\|_{\mathscr{X}}$ the dual and the norm of $\mathscr{X}$, respectively.

Given $p \in (1,\infty)$, we denote by $p\prime$ the real number such that $1/p + 1/p\prime = 1$, i.e., $p\prime = p/(p-1)$.

The relation $\texttt{a} \lesssim \texttt{b}$ indicates that $\texttt{a} \leq C \texttt{b}$, with a positive constant $C$ which is independent of $\texttt{a}$, $\texttt{b}$, and the size of the elements in the mesh. The value of $C$ might change at each occurrence.


\section{The model problem}\label{sec:model_problem}
We begin with a motivation for the use of the spaces $\mathbf{W}^{1,p}(\Omega)\times \mathrm{L}^p(\Omega)$ with $p< d/(d-1)$.

\subsection{Motivation}
\label{sec:motivation}
Let us assume that $\Omega = \mathbb{R}^d$. If this is the case, the results of \cite[Section IV.2]{MR2808162} yield the following asymptotic behavior, near the point $x_0\in\Omega$, for the solution $(\boldsymbol{u},\pi)$ to problem \eqref{def:Stokes_singular_rhs}:
\begin{equation}
\label{eq:asymptotic_behavior}
|\nabla\boldsymbol{u}(x)|\approx|x-x_0|^{1-d},
\qquad
|\pi(x)|\approx|x-x_0|^{1-d}.
\end{equation}
This immediately implies that  $(\boldsymbol{u},\pi) \notin \mathbf{H}_0^1(\Omega) \times \mathrm{L}^2(\Omega)$. More precisely, a simple computation based on \eqref{eq:asymptotic_behavior} suggests that $|\nabla\boldsymbol{u}| \in \mathrm{L}^p(\Omega)$ and $\pi \in \mathrm{L}^p(\Omega)$ provided $p<d/(d-1)$. 

\subsection{Saddle point formulation}
The motivation presented in Section \ref{sec:motivation} suggests to consider the following saddle point formulation for problem \eqref{def:Stokes_singular_rhs}: Find $(\boldsymbol{u},\pi)\in \mathbf{W}_0^{1,p}(\Omega)\times \mathrm{L}^p(\Omega)/\mathbb{R}$, with $p<d/(d-1)$, such that
\begin{equation}\label{eq:weak_Stokes_system}
\begin{array}{rcll}
 a(\boldsymbol{u},\boldsymbol{v})+b(\boldsymbol{v},\pi) & = &\langle\boldsymbol{f}\delta_{x_0},\boldsymbol{v}\rangle &\quad \forall\ \boldsymbol{v}\in\mathbf{W}^{1,p\prime}_0(\Omega),\\
b(\boldsymbol{u},q) & = & 0 &\quad \forall\ q\in\mathrm{L}^{p\prime}(\Omega)/\mathbb{R},
\end{array}
\end{equation}
where $\langle \cdot, \cdot \rangle$ denotes the duality pairing between the spaces $\mathbf{W}^{-1,p}(\Omega):= \mathbf{W}^{1,p\prime}_0(\Omega)'$ and $\mathbf{W}^{1,p\prime}_0(\Omega)$.
The bilinear forms  $a(\cdot,\cdot)$ and $b(\cdot,\cdot)$ are defined, respectively, by
\begin{equation*}
a(\boldsymbol{w},\boldsymbol{v}):=\int_{\Omega}\nabla\boldsymbol{w}:\nabla\boldsymbol{v},
\qquad 
b(\boldsymbol{v},q):=-\int_{\Omega}q\text{div}\boldsymbol{v}.
\end{equation*}

\subsection{Well--posedness}
Our heuristic argument suggests that the well--posedness of problem  \eqref{eq:weak_Stokes_system} is conditioned to $p < d/ ( d-1)$. To make matters precise, we introduce $\mathbf{G}_D$ as the Green operator for the inhomogeneous problem for the incompressible Stokes system with vanishing Dirichlet boundary conditions. That is, if $(\boldsymbol{\varphi},\xi)$ solves
\[
-\Delta \boldsymbol{\varphi} + \nabla \xi = \mathcal{F} \textrm{ in } \Omega,
\quad
\text{div} \boldsymbol{\varphi} = 0 \textrm{ in } \Omega,
\quad
\boldsymbol{\varphi} = \boldsymbol{0} \textrm{ on } \partial\Omega,
\]
then $\mathbf{G}_D \mathcal{F} :=  \boldsymbol{\varphi}$.  The regularity results of \cite[Corollary 1.7]{MR2987056} (with $\alpha = -1$ and $q=2$) guarantee that the operator $\mathbf{G}_D :\mathbf{W}^{-1,p}(\Omega)\rightarrow \mathbf{W}^{1,p}(\Omega)$ is bounded if
\begin{equation}\label{eq:rangep}
\frac{2d}{d+1}-\varepsilon<p<\frac{2d}{d-1}+\varepsilon
\end{equation}
for some $\varepsilon = \varepsilon(\Omega)>0$. As a consequence, for every bounded and Lipschitz domain $\Omega \subset \mathbb{R}^{3}$, there exist $p = p(\Omega) > 3$ such that $\mathbf{G}_D$ is well--defined and bounded. When $\Omega \subset \mathbb{R}^2$ is a bounded and Lipschitz domain, the same conclusion holds for some $p = p(\Omega)>4$.

We present the following result.

\begin{theorem}[well--posedness]\label{thm:well-posedness}
Let $d \in \{2,3\}$ and $\Omega \subset \mathbb{R}^d$ be an open and bounded Lipschitz polytope. There exists $\varepsilon= \varepsilon(\Omega) > 0$ such that, if $p \in (2d/(d+1) - \varepsilon, d/(d-1))$, then, problem \eqref{eq:weak_Stokes_system} is well--posed. In addition,
\[
\| \nabla \boldsymbol{u} \|_{\mathbf{L}^{p}(\Omega)} + \|  \pi \|_{\mathrm{L}^p(\Omega)} \lesssim | \boldsymbol{f} |  \| \delta_{x_0} \|_{\mathrm{W}^{-1,p}(\Omega)}, 
\]
where the hidden constant is independent of the solution and data.
\end{theorem}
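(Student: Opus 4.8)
The plan is to reduce problem \eqref{eq:weak_Stokes_system} to the abstract well--posedness of the Stokes operator encoded in the boundedness of $\mathbf{G}_D$, so the first task is to locate the datum $\boldsymbol{f}\delta_{x_0}$ in the correct dual space. The crucial elementary observation is that the condition $p < d/(d-1)$ is equivalent to $p\prime > d$, so Morrey's embedding yields $\mathbf{W}^{1,p\prime}_0(\Omega) \hookrightarrow \mathbf{C}(\overline{\Omega})$. Consequently, the functional $\boldsymbol{v} \mapsto \langle \boldsymbol{f}\delta_{x_0}, \boldsymbol{v}\rangle = \boldsymbol{f}\cdot\boldsymbol{v}(x_0)$ is well defined and bounded on $\mathbf{W}^{1,p\prime}_0(\Omega)$, with
\[
| \langle \boldsymbol{f}\delta_{x_0}, \boldsymbol{v}\rangle | \leq |\boldsymbol{f}|\, \| \boldsymbol{v} \|_{\mathbf{C}(\overline{\Omega})} \lesssim |\boldsymbol{f}|\, \| \nabla \boldsymbol{v} \|_{\mathbf{L}^{p\prime}(\Omega)}.
\]
This shows $\boldsymbol{f}\delta_{x_0} \in \mathbf{W}^{-1,p}(\Omega)$ and, by definition of the dual norm, $\| \boldsymbol{f}\delta_{x_0}\|_{\mathbf{W}^{-1,p}(\Omega)} \lesssim |\boldsymbol{f}|\, \|\delta_{x_0}\|_{\mathrm{W}^{-1,p}(\Omega)}$.

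The second step is to reconcile the ranges of admissibility. The boundedness of $\mathbf{G}_D$ provided by \cite[Corollary 1.7]{MR2987056} holds on the interval \eqref{eq:rangep}, namely $2d/(d+1) - \varepsilon < p < 2d/(d-1) + \varepsilon$, whereas membership of $\delta_{x_0}$ in $\mathbf{W}^{-1,p}(\Omega)$ requires $p < d/(d-1)$. Since $d/(d-1) < 2d/(d-1)+\varepsilon$, the intersection of these two constraints is precisely $2d/(d+1) - \varepsilon < p < d/(d-1)$, which is exactly the range claimed in the statement; I fix $p$ in this interval from now on. With $\boldsymbol{f}\delta_{x_0} \in \mathbf{W}^{-1,p}(\Omega)$ at hand, I would invoke the isomorphism property of the Stokes system underlying \cite[Corollary 1.7]{MR2987056}: it produces a unique pair $(\boldsymbol{u},\pi) \in \mathbf{W}^{1,p}_0(\Omega)\times \mathrm{L}^p(\Omega)/\mathbb{R}$ solving \eqref{eq:weak_Stokes_system}, with $\boldsymbol{u} = \mathbf{G}_D(\boldsymbol{f}\delta_{x_0})$, and simultaneously delivers the velocity bound $\|\nabla\boldsymbol{u}\|_{\mathbf{L}^p(\Omega)} \lesssim \|\boldsymbol{f}\delta_{x_0}\|_{\mathbf{W}^{-1,p}(\Omega)}$. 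Chaining this with the first step closes the velocity half of the stability estimate.

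The delicate point I anticipate is the pressure bound, since the operator $\mathbf{G}_D$ as defined returns only the velocity component $\boldsymbol{\varphi}$. To recover and control $\pi$, I would use the generalized inf--sup (Ne\v{c}as) condition for the divergence form $b$ in the $\mathbf{L}^p$ setting: once $\boldsymbol{u}$ is fixed, the first equation of \eqref{eq:weak_Stokes_system} reads $b(\boldsymbol{v},\pi) = \langle\boldsymbol{f}\delta_{x_0},\boldsymbol{v}\rangle - a(\boldsymbol{u},\boldsymbol{v})$ for all $\boldsymbol{v}\in\mathbf{W}^{1,p\prime}_0(\Omega)$, and the surjectivity of $\operatorname{div}:\mathbf{W}^{1,p}_0(\Omega)\to\mathrm{L}^p(\Omega)/\mathbb{R}$, equivalently the inf--sup stability of $b$ on Lipschitz polytopes, yields $\|\pi\|_{\mathrm{L}^p(\Omega)/\mathbb{R}} \lesssim |\boldsymbol{f}|\,\|\delta_{x_0}\|_{\mathrm{W}^{-1,p}(\Omega)}$ through the bounds already obtained for $\boldsymbol{f}\delta_{x_0}$ and for $\nabla\boldsymbol{u}$. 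Verifying this inf--sup condition in the genuinely Banach regime $p\neq 2$ on a possibly nonconvex Lipschitz domain is the substantive ingredient, but it is exactly what is encapsulated in the cited $\mathrm{L}^p$--regularity theory; so in practice the whole statement, including uniqueness, follows by assembling these pieces.
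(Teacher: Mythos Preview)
Your argument is correct and follows essentially the same route as the paper: both verify $\delta_{x_0}\in\mathrm{W}^{-1,p}(\Omega)$ via the Sobolev embedding $\mathbf{W}^{1,p\prime}_0(\Omega)\hookrightarrow\mathbf{C}(\overline{\Omega})$ (which holds since $p<d/(d-1)$ forces $p\prime>d$) and then invoke the boundedness of $\mathbf{G}_D$ from \cite[Corollary~1.7]{MR2987056} on the intersection of the admissible ranges. The only cosmetic differences are that the paper splits into the cases $d=2$ and $d=3$ rather than treating them uniformly, and it leaves the pressure estimate implicit in the well--posedness statement whereas you spell it out via the $\mathrm{L}^p$ inf--sup condition for $b$.
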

\begin{proof} We proceed on the basis of two cases.
\begin{itemize}
\item[i)] $d=2$. Since $p\prime>2$, the Sobolev embedding $\mathbf{W}_0^{1,p\prime}(\Omega)\hookrightarrow\mathbf{C}(\overline{\Omega})$ guarantees that the forcing term $\langle \boldsymbol{f}\delta_{x_0},\boldsymbol{v}\rangle = \boldsymbol{f} \cdot \boldsymbol{v}(x_0)$ is well--defined and that $\delta_{x_0} \in \mathrm{W}^{-1,p}(\Omega)$. Since $\mathbf{G}_D$ is bounded when $p$ is restricted to \eqref{eq:rangep}, we conclude that problem \eqref{eq:weak_Stokes_system} is well--posed for $p\in\left(4/3-\varepsilon, 2\right)$.
\item[ii)] $d=3$. Notice that $p\prime>3$. Analogous arguments to the ones presented in the previous case allow us to conclude that \eqref{eq:weak_Stokes_system} is well--posed for $p\in\left(3/2-\varepsilon, 3/2 \right)$.
\end{itemize}
This concludes the proof. \qed
\end{proof}

\subsection{Inf--sup condition}
Let us introduce the product spaces 
\[
\mathcal{X}:=\mathbf{W}_0^{1,p}(\Omega)\times \mathrm{L}^{p}(\Omega)/\mathbb{R}, \quad \mathcal{Y}:=\mathbf{W}_0^{1,p\prime}(\Omega)\times \mathrm{L}^{p\prime}(\Omega)/\mathbb{R}.
\]
With these spaces at hand, we define the bilinear form $c:\mathcal{X}\times\mathcal{Y}\rightarrow \mathbb{R}$ by
\begin{equation}\label{eq:stokes_bilinear_c}
c((\boldsymbol{w},r),(\boldsymbol{v},q)):=a(\boldsymbol{w},\boldsymbol{v})+b(\boldsymbol{v},r)-b(\boldsymbol{w},q),
\end{equation}
with norm
\begin{equation}\label{eq:stokes_bilinear_c_norm}
\|c\|=\sup_{(\boldsymbol{0},0)\neq(\boldsymbol{w},r)\in\mathcal{X}}\sup_{(\boldsymbol{0},0)\neq(\boldsymbol{v},q)\in\mathcal{Y}}\frac{c((\boldsymbol{w},r),(\boldsymbol{v},q))}{\|(\boldsymbol{w},r)\|_{\mathcal{X}}\|(\boldsymbol{v},q)\|_{\mathcal{Y}}}.
\end{equation}

We introduce the following alternative weak formulation for problem \eqref{def:Stokes_singular_rhs}: Find $(\boldsymbol{u},\pi)\in\mathcal{X}$ such that
\begin{equation*}
c((\boldsymbol{u},\pi),(\boldsymbol{v},q))=\langle\boldsymbol{f}\delta_{x_0},\boldsymbol{v}\rangle\qquad \forall (\boldsymbol{v},q)\in\mathcal{Y}.
\end{equation*}
With the well--posedness of system \eqref{eq:weak_Stokes_system} for $p \in (2d/(d+1) - \varepsilon, d/(d-1))$ at hand, we conclude the existence of a constant $\beta>0$ such that bilinear form $c(\cdot,\cdot)$ satisfies the following inf--sup condition \cite[Theorem 2.1 and Remark 2.1]{MR972452}
\begin{multline}\label{eq:beta_infsup}
\inf_{(\boldsymbol{0},0)\neq(\boldsymbol{w},r)\in\mathcal{X}}\sup_{(\boldsymbol{0},0)\neq(\boldsymbol{v},q)\in\mathcal{Y}}\frac{c((\boldsymbol{w},r),(\boldsymbol{v},q))}{\|(\boldsymbol{w},r)\|_{\mathcal{X}}\|(\boldsymbol{v},q)\|_{\mathcal{Y}}}= \\ 
\inf_{(\boldsymbol{0},0)\neq(\boldsymbol{v},q)\in\mathcal{Y}}\sup_{(\boldsymbol{0},0)\neq(\boldsymbol{w},r)\in\mathcal{X}}\frac{c((\boldsymbol{w},r),(\boldsymbol{v},q))}{\|(\boldsymbol{w},r)\|_{\mathcal{X}}\|(\boldsymbol{v},q)\|_{\mathcal{Y}}}=\beta.
\end{multline}

\section{Finite element approximation}
\label{sec:fem}

We now introduce the discrete setting in which we will operate. We first introduce some terminology and a few basic ingredients and assumptions that will be common to all our methods.


\subsection{Triangulation and finite element spaces}
We consider $\mathscr{T}=\{T\}$ to be a conforming partition  of $\overline{\Omega}$ into closed simplices $T$ with size $h_T=\text{diam}(T)$. Define $h_{\mathscr{T}}:=\max_{T\in\mathscr{T}}h_T$. We denote by $\mathbb{T}$ the collection of conforming and shape regular meshes that are refinements of an initial mesh $\mathscr{T}_0$. 

Let $\mathscr{S}$ be the set of internal $(d-1)-$dimensional interelement boundaries $S$ of $\mathscr{T}$. For $S \in \mathscr{S}$, we denote by $h_S$ the diameter of $S$. For $T \in \mathscr{T}$, let $\mathscr{S}_T$ denote the subset of $\mathscr{S}$ which contains the sides in $\mathscr{S}$ which are sides of $T$. We also denote by $\mathcal{N}_S$ the subset of $\mathscr{T}$ that contains the two elements that have $S$ as a side, in other words, $\mathcal{N}_S=\{T^+,T^-\}$, where $T^+, T^- \in \mathscr{T}$ are such that $S = T^+ \cap T^-$. For $T \in \mathscr{T}$, we define the \emph{stars} or \emph{patches} associated with an element $T$ as
\begin{equation}\label{def:patch}
\mathcal{N}_T:=\bigcup_{T^{\prime}\in\mathscr{T}:T\cap T^\prime\neq\emptyset}T^\prime, \qquad \qquad 
\mathcal{N}_T^{*}:=\bigcup_{T^{\prime}\in\mathscr{T}:\mathscr{S}_{T}\cap \mathscr{S}_{T^\prime}\neq\emptyset}T^\prime.
\end{equation}

For a discrete tensor valued function $\mathbf{W}_{\mathscr{T}}$, we define the jump or interelement
residual on the internal side $S\in\mathscr{S}$, shared by the distinct elements $T^+, T^-\in\mathcal{N}_S$, by $\llbracket{\mathbf{W}_{\mathscr{T}}\cdot\boldsymbol{\nu}} \rrbracket=\mathbf{W}_{\mathscr{T}}|_{T^+}\cdot\boldsymbol{\nu}^+ +\mathbf{W}_{\mathscr{T}}|_{T^-}\cdot\boldsymbol{\nu}^-$. Here, $\boldsymbol{\nu}^+$ and $\boldsymbol{\nu}^-$ are unit normal on $S$ pointing towards $T^+$ and $T^-$, respectively.

\subsection{Inf--sup stable finite element spaces}
\label{sec:infsup_fem}

We now introduce the inf--sup stable finite element spaces that will be considered in our work. Given a mesh $\mathscr{T}\in\mathbb{T}$, we denote by $\mathbf{V}(\mathscr{T})$ and $\mathcal{P}(\mathscr{T})$ the finite element spaces that approximate the velocity field and the pressure, respectively. The following
elections are popular:
\begin{itemize}
\item[(a)] The mini element \cite[Section 4.2.4]{MR2050138}: Here,
\begin{align*}
&\mathbf{V}(\mathscr{T})=\{\boldsymbol{v}_{\mathscr{T}}\in\mathbf{C}(\overline{\Omega})\ :\ \boldsymbol{v}_{\mathscr{T}}|_T\in[\mathbb{P}_1(T)\oplus\mathbb{B}(T)]^{d} \ \forall \ T\in\mathscr{T}\}\cap\mathbf{W}_0^{1,p\prime}(\Omega),\\
&\mathcal{P}(\mathscr{T})=\{ q_{\mathscr{T}}\in C(\overline{\Omega})\ :\ q_{\mathscr{T}}|_T\in\mathbb{P}_1(T) \ \forall \ T\in\mathscr{T} \}\cap \mathrm{L}^{p\prime}(\Omega)/\mathbb{R},
\end{align*}
where $\mathbb{B}(T)$ denotes the space spanned by local bubble functions.
\item[(b)] The lowest order Taylor--Hood element \cite[Section 4.2.5]{MR2050138}: In this case,
\begin{align}
\label{eq:V_TH}
&\mathbf{V}(\mathscr{T})=\{\boldsymbol{v}_{\mathscr{T}}\in\mathbf{C}(\overline{\Omega})\ :\ \boldsymbol{v}_{\mathscr{T}}|_T\in[\mathbb{P}_2(T)]^{d} \ \forall \ T\in\mathscr{T}\}\cap\mathbf{W}_0^{1,p\prime}(\Omega),\\
\label{eq:P_TH}
&\mathcal{P}(\mathscr{T})=\{ q_{\mathscr{T}}\in C(\overline{\Omega})\ :\ q_{\mathscr{T}}|_T\in\mathbb{P}_1(T) \ \forall\ T\in\mathscr{T} \}\cap \mathrm{L}^{p\prime}(\Omega)/\mathbb{R}.
\end{align}
\end{itemize}
We observe that, for the values of $p$ provided in the statement of Theorem \ref{thm:well-posedness}, we have $\mathbf{V}(\mathscr{T})\subset\mathbf{W}^{1,p\prime}(\Omega)\subset\mathbf{W}^{1,p}(\Omega)$ and $\mathcal{P}(\mathscr{T})\subset \mathrm{L}^{p\prime}(\Omega) / \mathbb{R} \subset \mathrm{L}^{p}(\Omega) / \mathbb{R}$.

In the analysis that follows, the pair $(\mathbf{V}(\mathscr{T}),\mathcal{P}(\mathscr{T}))$ will represent indistinctly both the mini element and the lowest order Taylor--Hood element. An important property that these pairs of finite element spaces satisfy is the following compatibility condition: Let $1 < p < \infty$ and let $p\prime$ be the conjugate of $p$. Then, there exists $\gamma>0$, independent of $h_{\mathscr{T}}$, such that
\begin{equation}\label{eq:infsup_div}
\inf_{0\neq q_{\mathscr{T}}\in\mathcal{P}(\mathscr{T})}\sup_{\boldsymbol{0}\neq\boldsymbol{v}_{\mathscr{T}}\in\mathbf{V}(\mathscr{T})}\frac{b(\boldsymbol{v}_{\mathscr{T}},q_{\mathscr{T}})}{\|\nabla\boldsymbol{v}_{\mathscr{T}}\|_{\mathbf{L}^p(\Omega)}\|q_{\mathscr{T}}\|_{\mathrm{L}^{p\prime}(\Omega)}}\geq\gamma.
\end{equation}
We refer the reader to \cite[Lemma 4.20 and Lemma 4.24]{MR2050138} for a proof.

We consider the following finite element approximation of problem \eqref{eq:weak_Stokes_system}:
Find $(\boldsymbol{u}_{\mathscr{T}},\pi_{\mathscr{T}})\in\mathbf{V}(\mathscr{T})\times\mathcal{P}(\mathscr{T})$ such that 
\begin{equation}\label{eq:disc_Stokes_system}
\begin{array}{rcll}
 a(\boldsymbol{u}_{\mathscr{T}},\boldsymbol{v}_{\mathscr{T}})+b(\boldsymbol{v}_{\mathscr{T}},\pi_{\mathscr{T}}) & = &\langle\boldsymbol{f}\delta_{x_0},\boldsymbol{v}_{\mathscr{T}}\rangle &\quad \forall\ \boldsymbol{v}_{\mathscr{T}}\in\mathbf{V}(\mathscr{T}),\\
b(\boldsymbol{u}_{\mathscr{T}},q_{\mathscr{T}}) & = & 0 &\quad \forall\ q_{\mathscr{T}}\in\mathcal{P}(\mathscr{T}).
\end{array}
\end{equation}
Notice that, since $\mathbf{V}(\mathscr{T}) \hookrightarrow \mathbf{C}(\bar \Omega)$, the term  $\langle\boldsymbol{f}\delta_{x_0},\boldsymbol{v}_{\mathscr{T}}\rangle$ is well--defined. In fact $\langle\boldsymbol{f}\delta_{x_0},\boldsymbol{v}_{\mathscr{T}}\rangle = \boldsymbol{f} \cdot \boldsymbol{v}_{\mathscr{T}}(x_0)$. We thus conclude,  in view of the compatibility condition \eqref{eq:infsup_div}, the existence and uniqueness of a discrete solution; see \cite[Corollary 2.2]{MR972452}.

\subsection{Interpolation error estimates}
\label{sec:interpolation}
For $\mathscr{T} \in \mathbb{T}$ and $v \in \mathrm{W}_0^{1,p\prime}(\Omega)$, with $p\prime>d$, we define $\mathcal{I}_{\mathscr{T}} v$ as the Lagrange interpolation operator onto continuous piecewise polynomials of degree $k \in \{1,2\}$ over $\mathscr{T}$, that vanish on $\partial \Omega$. We will consider $k=1$ for approximation based on mini element and $k=2$ for Taylor--Hood approximation. For $\boldsymbol{v} \in \mathbf{W}_0^{1,p\prime}(\Omega)$, we set $\mathcal{I}_{\mathscr{T}}\boldsymbol{v}$ to be the Lagrange interpolation operator applied componentwise.

The following result provides interpolation error estimates.

\begin{lemma}[interpolation error estimates]
\label{lemma:interp_proper}
Let $T \in \mathscr{T}$. If $\boldsymbol{v}\in\mathbf{W}^{1,{p\prime}}(T)$, with $p\prime>d$, then
\begin{equation}\label{eq:estimate_element}
\|\boldsymbol{v}-\mathcal{I}_{\mathscr{T}}\boldsymbol{v}\|_{\mathbf{L}^{p\prime}(T)}\lesssim h_T\|\nabla\boldsymbol{v}\|_{\mathbf{L}^{p\prime}(T)}.
\end{equation} 
Let $T\in\mathscr{T}$ and $S \subset\mathscr{S}_T$. If $\boldsymbol{v}\in\mathbf{W}^{1,{p\prime}}(\mathcal{N}_S)$, with $p\prime>d$, then
\begin{equation}\label{eq:estimate_side}
\|\boldsymbol{v}-\mathcal{I}_{\mathscr{T}}\boldsymbol{v}\|_{\mathbf{L}^{p\prime}(S)}\lesssim h_T^{1-1/{p\prime}}\|\nabla\boldsymbol{v}\|_{\mathbf{L}^{p\prime}(\mathcal{N}_S)}.
\end{equation}
\end{lemma}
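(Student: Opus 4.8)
The plan is to establish the two Lagrange interpolation estimates \eqref{eq:estimate_element} and \eqref{eq:estimate_side} by combining the standard local approximation theory for the Lagrange interpolant with a scaling argument, paying attention to the fact that we work in $\mathrm{W}^{1,p\prime}$ with $p\prime > d$ rather than in the Hilbertian setting $p\prime = 2$. The hypothesis $p\prime > d$ is exactly what guarantees, via the Sobolev embedding $\mathbf{W}^{1,p\prime}(T) \hookrightarrow \mathbf{C}(\overline{T})$, that point values are well-defined and hence that $\mathcal{I}_{\mathscr{T}}\boldsymbol{v}$ makes sense for $\boldsymbol{v} \in \mathbf{W}^{1,p\prime}$; this is the structural reason the estimates hold in this range. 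Since the Lagrange interpolant acts componentwise, it suffices to prove the scalar versions and then sum over components, so I would reduce immediately to a scalar $v \in \mathrm{W}^{1,p\prime}$.

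First I would prove \eqref{eq:estimate_element}. The key observation is that the Lagrange interpolant reproduces constants: $\mathcal{I}_{\mathscr{T}} c = c$ for any constant $c$ (for $k \in \{1,2\}$ the interpolant is exact on $\mathbb{P}_0 \subset \mathbb{P}_k$). Hence for any constant $c$ one has $v - \mathcal{I}_{\mathscr{T}} v = (v-c) - \mathcal{I}_{\mathscr{T}}(v-c)$, and by stability of the interpolation operator in $\mathrm{L}^{p\prime}$ (which again uses $p\prime > d$ so that the $\mathrm{L}^\infty$ point-value bound controlling $\mathcal{I}_{\mathscr{T}}$ is dominated by the $\mathrm{W}^{1,p\prime}$ norm through the embedding), I can bound $\|v - \mathcal{I}_{\mathscr{T}} v\|_{\mathrm{L}^{p\prime}(T)} \lesssim \inf_{c} \|v - c\|_{\mathrm{W}^{1,p\prime}(T)}$ in a scaled sense. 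The standard route is to pass to the reference element $\hat{T}$ via an affine map, invoke the Bramble--Hilbert lemma (or the Deny--Lions theorem) to bound the reference-element interpolation error by $|\hat{v}|_{\mathrm{W}^{1,p\prime}(\hat{T})}$, and then scale back, tracking the powers of $h_T$ produced by the Jacobian of the affine map. The factor $h_T$ on the right-hand side is precisely the gap between the $\mathrm{L}^{p\prime}$ norm on the left and the $\mathrm{W}^{1,p\prime}$ seminorm on the right under this scaling.

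For the trace estimate \eqref{eq:estimate_side} I would argue analogously but now measure the interpolation error on a side $S$, which introduces the fractional power $h_T^{1-1/p\prime}$. The natural tool is a scaled trace inequality of the form $\|w\|_{\mathrm{L}^{p\prime}(S)}^{p\prime} \lesssim h_T^{-1}\|w\|_{\mathrm{L}^{p\prime}(\mathcal{N}_S)}^{p\prime} + h_T^{p\prime - 1}\|\nabla w\|_{\mathrm{L}^{p\prime}(\mathcal{N}_S)}^{p\prime}$ applied to $w = v - \mathcal{I}_{\mathscr{T}} v$; inserting the element estimate \eqref{eq:estimate_element} (applied on each of the two elements of $\mathcal{N}_S$) into the first term and bounding the interpolation error gradient by $\|\nabla v\|_{\mathrm{L}^{p\prime}(\mathcal{N}_S)}$ in the second term yields the stated $h_T^{1-1/p\prime}$ scaling. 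The main obstacle, and the one point I would be careful about, is justifying the $\mathrm{L}^{p\prime}$- and trace-stability of the Lagrange interpolant in the non-Hilbertian regime: unlike in $\mathrm{L}^2$, one must invoke the Sobolev embedding $p\prime > d$ to control the nodal values, and one must verify that the hidden constants depend only on the shape-regularity of the mesh and on $p\prime$, not on $h_T$ itself. Everything else is routine affine scaling and an application of Bramble--Hilbert on the fixed reference configuration.
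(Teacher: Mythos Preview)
Your proposal is correct and follows essentially the same route as the paper. For \eqref{eq:estimate_element} the paper simply cites the standard Lagrange interpolation theory (reference element, Bramble--Hilbert), which is exactly what you sketch; for \eqref{eq:estimate_side} the paper invokes the same scaled trace inequality you use, applied to $\boldsymbol{w}=\boldsymbol{v}-\mathcal{I}_{\mathscr{T}}\boldsymbol{v}$ on a single element $T\in\mathcal{N}_S$ rather than on the whole patch, and then inserts \eqref{eq:estimate_element} together with the $\mathbf{W}^{1,p\prime}$-stability of $\mathcal{I}_{\mathscr{T}}$---the only difference from your version being that working on one element already suffices, since $T\subset\mathcal{N}_S$.
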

\begin{proof}
The estimate \eqref{eq:estimate_element} is standard; see, for instance, \cite[Theorem 1.103]{MR2050138}. The estimate \eqref{eq:estimate_side} follows from the scaled--trace inequality
\[
\| \boldsymbol{w} \|_{\mathbf{L}^{p\prime}(S)} \lesssim h_T^{-1/p\prime}\| \boldsymbol{w} \|_{\mathbf{L}^{p\prime}(T)} + h_T^{1-1/p\prime}\| \nabla \boldsymbol{w} \|_{\mathbf{L}^{p\prime}(T)} \quad \forall\ \boldsymbol{w} \in\mathrm{W}^{1,{p\prime}}(T),
\]
which follows, for instance, from the trace identity in \cite[Lemma 6.2]{MR2648380} and standard interpolation error estimates for the Lagrange interpolation operator \cite[Theorem 1.103]{MR2050138}. This concludes the proof.\qed
\end{proof} 

\section{A posteriori error estimates}\label{sec:a_posteriori_estimates}

We begin our analysis by introducing the so--called residual. Let $(\boldsymbol{u},\pi)\in\mathcal{X}$ and $(\boldsymbol{u}_{\mathscr{T}},\pi_{\mathscr{T}})\in\mathbf{V}(\mathscr{T})\times\mathcal{P}(\mathscr{T})$ be the solutions to problems \eqref{eq:weak_Stokes_system} and \eqref{eq:disc_Stokes_system}, respectively. We define the residual $\mathcal{R}:=\mathcal{R}(\boldsymbol{u}_{\mathscr{T}},\pi_{\mathscr{T}},\boldsymbol{f}\delta_{x_0})\in\mathcal{Y}^\prime$ as follows:
\begin{equation}\label{def:residual}
\langle\mathcal{R},(\boldsymbol{v},q)\rangle_{\mathcal{Y}^\prime\times\mathcal{Y}}:=\langle\boldsymbol{f}\delta_{x_0},\boldsymbol{v}\rangle-c((\boldsymbol{u}_{\mathscr{T}},\pi_{\mathscr{T}}),(\boldsymbol{v},q)) \quad \forall\ (\boldsymbol{v},q)\in\mathcal{Y}.
\end{equation}
Notice that the residual depends on the approximated solution $(\boldsymbol{u}_{\mathscr{T}},\pi_{\mathscr{T}}) \in \mathbf{V}(\mathscr{T})\times\mathcal{P}(\mathscr{T})$ and the data $\boldsymbol{f}$ and $\delta_{x_0}$.

\subsection{Error and residual}
Let us define the error $(\mathbf{e}_{\boldsymbol{u}}, e_\pi):=(\boldsymbol{u}-\boldsymbol{u}_{\mathscr{T}},\pi-\pi_{\mathscr{T}})$. The residual and the error are related by the following identity:
\begin{equation}\label{eq:identity_residual_error}
\langle\mathcal{R},(\boldsymbol{v},q)\rangle_{\mathcal{Y}^\prime\times\mathcal{Y}}=c((\mathbf{e}_{\boldsymbol{u}},e_\pi),(\boldsymbol{v},q)) \quad \forall\ (\boldsymbol{v},q)\in\mathcal{Y}.
\end{equation}

The next result guarantees that the residual and the error are equivalent.
\begin{lemma}[equivalence result]\label{lemma:eqErrorRes}
Let $(\boldsymbol{u},\pi)$ and $(\boldsymbol{u}_{\mathscr{T}},\pi_{\mathscr{T}})$ be the solutions to \eqref{eq:weak_Stokes_system} and \eqref{eq:disc_Stokes_system}, respectively. If $p \in (2d/(d+1) - \varepsilon, d/(d-1))$, then
\begin{equation*}
\beta\|(\mathbf{e}_{\boldsymbol{u}},e_\pi)\|_{\mathcal{X}}\leq\|\mathcal{R}\|_{\mathcal{Y}\prime}\leq\|c\|\|(\mathbf{e}_{\boldsymbol{u}},e_\pi)\|_{\mathcal{X}},
\end{equation*}
where $\beta>0$ is the inf--sup constant associated to the bilinear form $c(\cdot,\cdot)$, given in \eqref{eq:beta_infsup}, and $\|c\| \geq \beta$ corresponds to the norm of $c(\cdot,\cdot)$, which is defined in \eqref{eq:stokes_bilinear_c_norm}.
\end{lemma}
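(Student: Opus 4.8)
The plan is to derive both inequalities directly from the error--residual identity \eqref{eq:identity_residual_error}, combining it with the definition of the dual norm on $\mathcal{Y}'$, the inf--sup condition \eqref{eq:beta_infsup} for the lower bound, and the definition \eqref{eq:stokes_bilinear_c_norm} of $\|c\|$ for the upper bound. No machinery beyond these three ingredients should be required; the argument is the standard abstract equivalence between a Galerkin residual and the approximation error in a Banach--space inf--sup framework, so the work lies entirely in reading off the constants correctly.

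First I would establish the upper bound. Writing out the dual norm as $\|\mathcal{R}\|_{\mathcal{Y}'} = \sup_{(\boldsymbol{0},0)\neq(\boldsymbol{v},q)\in\mathcal{Y}} \langle \mathcal{R},(\boldsymbol{v},q)\rangle / \|(\boldsymbol{v},q)\|_{\mathcal{Y}}$, I substitute the identity \eqref{eq:identity_residual_error} to replace the numerator by $c((\mathbf{e}_{\boldsymbol{u}},e_\pi),(\boldsymbol{v},q))$. For every admissible test pair the definition \eqref{eq:stokes_bilinear_c_norm} yields $c((\mathbf{e}_{\boldsymbol{u}},e_\pi),(\boldsymbol{v},q)) \leq \|c\|\,\|(\mathbf{e}_{\boldsymbol{u}},e_\pi)\|_{\mathcal{X}}\,\|(\boldsymbol{v},q)\|_{\mathcal{Y}}$, so dividing by $\|(\boldsymbol{v},q)\|_{\mathcal{Y}}$ and passing to the supremum gives $\|\mathcal{R}\|_{\mathcal{Y}'} \leq \|c\|\,\|(\mathbf{e}_{\boldsymbol{u}},e_\pi)\|_{\mathcal{X}}$.

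For the lower bound I would specialize the inf--sup condition \eqref{eq:beta_infsup} to the particular element $(\boldsymbol{w},r) = (\mathbf{e}_{\boldsymbol{u}},e_\pi) \in \mathcal{X}$. Since the infimum over $\mathcal{X}$ equals $\beta$, the inner supremum evaluated at this fixed element is bounded below by $\beta$, that is, $\beta \leq \sup_{(\boldsymbol{0},0)\neq(\boldsymbol{v},q)\in\mathcal{Y}} c((\mathbf{e}_{\boldsymbol{u}},e_\pi),(\boldsymbol{v},q)) / (\|(\mathbf{e}_{\boldsymbol{u}},e_\pi)\|_{\mathcal{X}}\|(\boldsymbol{v},q)\|_{\mathcal{Y}})$. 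Multiplying through by $\|(\mathbf{e}_{\boldsymbol{u}},e_\pi)\|_{\mathcal{X}}$ and invoking \eqref{eq:identity_residual_error} once more to recognize the remaining supremum as $\|\mathcal{R}\|_{\mathcal{Y}'}$ yields $\beta\,\|(\mathbf{e}_{\boldsymbol{u}},e_\pi)\|_{\mathcal{X}} \leq \|\mathcal{R}\|_{\mathcal{Y}'}$.

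I do not anticipate a genuine obstacle here, as the well--posedness established in Theorem \ref{thm:well-posedness} already furnishes the constant $\beta$ in \eqref{eq:beta_infsup} for the admissible range of $p$. The only points demanding mild care are the degenerate case $(\mathbf{e}_{\boldsymbol{u}},e_\pi)=(\boldsymbol{0},0)$, for which the lower bound holds trivially so that the supremum need only be taken over nonzero errors, and the correct reading of \eqref{eq:beta_infsup} as providing the \emph{uniform} lower bound $\beta$ for the test supremum at every fixed element of $\mathcal{X}$, rather than merely at the minimizer.
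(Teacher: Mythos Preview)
Your proof is correct and follows essentially the same approach as the paper: both use the identity \eqref{eq:identity_residual_error} together with the inf--sup condition \eqref{eq:beta_infsup} for the lower bound and the definition \eqref{eq:stokes_bilinear_c_norm} of $\|c\|$ for the upper bound. The only cosmetic difference is that the paper treats the lower bound first and the upper bound second, whereas you reverse the order and add an explicit remark on the trivial case $(\mathbf{e}_{\boldsymbol{u}},e_\pi)=(\boldsymbol{0},0)$.
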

\begin{proof}
In view of the inf--sup condition \eqref{eq:beta_infsup} and the relation \eqref{eq:identity_residual_error}, we immediately arrive at
\begin{equation*}
\beta\|(\mathbf{e}_{\boldsymbol{u}},e_\pi)\|_{\mathcal{X}} \leq \sup_{(\boldsymbol{0},0)\neq(\boldsymbol{v},q)\in\mathcal{Y}}\frac{c((\mathbf{e}_{\boldsymbol{u}},e_\pi),(\boldsymbol{v},q))}{\|(\boldsymbol{v},q)\|_{\mathcal{Y}}}=\|\mathcal{R}\|_{\mathcal{Y}^\prime}.
\end{equation*}
On the other hand, invoking again \eqref{eq:identity_residual_error}, and then \eqref{eq:stokes_bilinear_c_norm}, we conclude that
\begin{equation*}
\|\mathcal{R}\|_{\mathcal{Y}^\prime}=\sup_{(\boldsymbol{0},0)\neq(\boldsymbol{v},q)\in\mathcal{Y}}\frac{c((\mathbf{e}_{\boldsymbol{u}},e_\pi),(\boldsymbol{v},q))}{\|(\boldsymbol{v},q)\|_{\mathcal{Y}}}
\leq \|c\|\|(\mathbf{e}_{\boldsymbol{u}},e_\pi)\|_{\mathcal{X}}.
\end{equation*}
This concludes the proof.\qed
\end{proof}

\subsection{A posteriori error estimators}\label{sec:error_estimators}
We now introduce a posteriori error estimators for the finite element approximation \eqref{eq:disc_Stokes_system} on the basis of the low--order inf--sup stable finite element pairs introduced in Section \ref{sec:infsup_fem}. 

Let $T \in \mathscr{T}$. If $x_0 \in T$ is such that
\begin{enumerate}[label=(\roman*)]
\item $x_0$ is not a vertex of $T$ or a midpoint of a side of $T$, when Taylor--Hood approximation is considered, or
\label{i}
\item $x_0$ is not a vertex of $T$, when the approximation based on the mini element is considered, then
\label{ii}
\end{enumerate}
we define the element error indicators
\begin{multline}
\eta_{p,T}:=\Big( 
h_T^p \|\Delta\boldsymbol{u}_{\mathscr{T}}-\nabla\pi_{\mathscr{T}}\|_{\mathbf{L}^p(T)}^p 
+
h_T \|\llbracket{(\nabla\boldsymbol{u}_{\mathscr{T}}-\mathbb{I}_{d}\pi_{\mathscr{T}})\cdot\boldsymbol{\nu}}\rrbracket \|_{\mathbf{L}^p(\partial T \setminus \partial \Omega)}^p 
\\
+
\|\text{div}\boldsymbol{u}_{\mathscr{T}}\|_{\mathrm{L}^p(T)}^p+h_T^{d-p(d-1)} |\boldsymbol{f}|^p \Big)^{\frac{1}{p}}.
\label{eq:local_indicator}
\end{multline}
If $x_0 \in T$ and \ref{i} or \ref{ii} do not hold, then
\begin{multline}
\eta_{p,T}:=\Big(
h_T^p \|\Delta\boldsymbol{u}_{\mathscr{T}}-\nabla\pi_{\mathscr{T}}\|_{\mathbf{L}^p(T)}^p 
+
h_T \|\llbracket{(\nabla\boldsymbol{u}_{\mathscr{T}}-\mathbb{I}_{d}\pi_{\mathscr{T}})\cdot\boldsymbol{\nu}}\rrbracket \|_{\mathbf{L}^p(\partial T \setminus \partial \Omega)}^p 
\\
+
\|\text{div}\boldsymbol{u}_{\mathscr{T}}\|_{\mathrm{L}^p(T)}^p
\Big)^{\frac{1}{p}}.
\label{eq:local_indicator2}
\end{multline}
If $x_0 \notin T$, then the indicator $\eta_{p,T}$ is defined as in \eqref{eq:local_indicator2}. Here, $(\boldsymbol{u}_{\mathscr{T}},\pi_{\mathscr{T}})$ denotes the solution to the discrete problem \eqref{eq:disc_Stokes_system} and $\mathbb{I}_{d}$ denotes the identity matrix in $\mathbb{R}^{d\times d}$. 
We recall that we consider our elements $T$ to be closed sets. Notice that, when Taylor--Hood approximation is considered, for functions $\boldsymbol{v} \in \mathbf{W}^{1,p\prime}(\Omega)$, with $p\prime>d$, we have that $(\boldsymbol{v} - \mathcal{I}_{\mathscr{T}} \boldsymbol{v})(x_0)$ vanishes when $x_0$ is a vertex of $T$ or a midpoint of a side of $T$. This motivates \ref{i}. Similar arguments motivate \ref{ii}; see also the proof of Theorem \ref{thm:reliability_estimate} below.

The a posteriori error estimators are thus defined by
\begin{equation}\label{eq:error_estimator}
\eta_p:=\left( \sum_{T\in\mathscr{T}} \eta_{p,T}^p\right)^{\frac{1}{p}}.
\end{equation}

\subsection{Reliability}
The main objective of this section is to obtain a global reliability property for the a posteriori error estimators $\eta_{p}$.

\begin{theorem}[global reliability]\label{global_reliability}
Let $p \in (2d/(d+1) - \varepsilon, d/(d-1))$. Let $(\boldsymbol{u},\pi)\in \mathbf{W}^{1,p}(\Omega)\times \mathrm{L}^{p}(\Omega)/\mathbb{R}$ be the solution to \eqref{eq:weak_Stokes_system} and $(\boldsymbol{u}_\mathscr{T},\pi_\mathscr{T})\in \mathbf{V}(\mathscr{T})\times \mathcal{P}(\mathscr{T})$ its finite element approximation obtained as the solution to \eqref{eq:disc_Stokes_system}. Then
\begin{equation*}\label{eq:reliability_estimate}
\|(\mathbf{e}_{\boldsymbol{u}},e_\pi)\|_{\mathcal{X}}
\lesssim
\eta_p,
\end{equation*}
where $\eta_p$ is defined as in \eqref{eq:error_estimator}. The hidden constant is independent of the solution $(\boldsymbol{u},\pi)$, its finite element approximation $(\boldsymbol{u}_\mathscr{T},\pi_\mathscr{T})$, the size of the elements in the mesh $\mathscr{T}$, and $\#\mathscr{T}$.
\label{thm:reliability_estimate}
\end{theorem}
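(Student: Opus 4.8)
The plan is to combine the equivalence of Lemma~\ref{lemma:eqErrorRes} with a residual--based estimate of $\|\mathcal{R}\|_{\mathcal{Y}\prime}$. Since that lemma gives $\beta\|(\mathbf{e}_{\boldsymbol{u}},e_\pi)\|_{\mathcal{X}}\leq\|\mathcal{R}\|_{\mathcal{Y}\prime}$, it suffices to prove $\|\mathcal{R}\|_{\mathcal{Y}\prime}\lesssim\eta_p$. I therefore fix an arbitrary $(\boldsymbol{v},q)\in\mathcal{Y}$ with $\|(\boldsymbol{v},q)\|_{\mathcal{Y}}=1$ and bound $\langle\mathcal{R},(\boldsymbol{v},q)\rangle$. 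The first step exploits Galerkin orthogonality: testing the first equation of the discrete problem \eqref{eq:disc_Stokes_system} with $\mathcal{I}_{\mathscr{T}}\boldsymbol{v}\in\mathbf{V}(\mathscr{T})$ (which is admissible since $p\prime>d$ ensures $\boldsymbol{v}$ is continuous and the Lagrange interpolant lands in $\mathbf{V}(\mathscr{T})$ for both the mini and Taylor--Hood pairs) allows the velocity test function $\boldsymbol{v}$ to be replaced by $\boldsymbol{w}:=\boldsymbol{v}-\mathcal{I}_{\mathscr{T}}\boldsymbol{v}$ in the velocity part of the residual. The pressure test function $q$ enters only through $b(\boldsymbol{u}_{\mathscr{T}},q)=-\int_\Omega q\,\mathrm{div}\,\boldsymbol{u}_{\mathscr{T}}$, which is left untouched as it already has the right structure.

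The second step is an element--wise integration by parts. On each $T\in\mathscr{T}$ the terms $a(\boldsymbol{u}_{\mathscr{T}},\boldsymbol{w})$ and $b(\boldsymbol{w},\pi_{\mathscr{T}})$ produce a volume contribution involving $\Delta\boldsymbol{u}_{\mathscr{T}}-\nabla\pi_{\mathscr{T}}$ together with boundary contributions; because $\boldsymbol{w}$ vanishes on $\partial\Omega$, summation over $\mathscr{T}$ collapses the boundary terms into the interior jumps $\llbracket(\nabla\boldsymbol{u}_{\mathscr{T}}-\mathbb{I}_{d}\pi_{\mathscr{T}})\cdot\boldsymbol{\nu}\rrbracket$. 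This yields a representation of $\langle\mathcal{R},(\boldsymbol{v},q)\rangle$ as the sum of an interior--residual term, a jump term, the divergence term $-\int_\Omega q\,\mathrm{div}\,\boldsymbol{u}_{\mathscr{T}}$, and the singular term $\boldsymbol{f}\cdot(\boldsymbol{v}-\mathcal{I}_{\mathscr{T}}\boldsymbol{v})(x_0)$.

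The third step bounds the first three contributions. For the interior and jump terms I apply Hölder's inequality with exponents $p$ and $p\prime$ together with the interpolation estimates \eqref{eq:estimate_element} and \eqref{eq:estimate_side} (recalling $1-1/p\prime=1/p$), followed by a discrete Hölder inequality over the elements; the finite overlap of the patches $\mathcal{N}_S$ guaranteed by shape regularity lets me absorb the factor $\|\nabla\boldsymbol{v}\|_{\mathbf{L}^{p\prime}(\Omega)}$. The divergence term is controlled directly by $\|\mathrm{div}\,\boldsymbol{u}_{\mathscr{T}}\|_{\mathrm{L}^p(\Omega)}\|q\|_{\mathrm{L}^{p\prime}(\Omega)}$. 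These three pieces reproduce exactly the first three terms of $\eta_{p,T}$ in \eqref{eq:local_indicator}--\eqref{eq:local_indicator2}.

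The hard part, and the reason for the case distinction \ref{i}--\ref{ii}, is the singular term $\boldsymbol{f}\cdot(\boldsymbol{v}-\mathcal{I}_{\mathscr{T}}\boldsymbol{v})(x_0)$. Let $T$ be the element with $x_0\in T$. If $x_0$ is a Lagrange node of $\mathcal{I}_{\mathscr{T}}$ (a vertex for the mini element, a vertex or a side midpoint for Taylor--Hood), then $(\boldsymbol{v}-\mathcal{I}_{\mathscr{T}}\boldsymbol{v})(x_0)=0$ and the term vanishes, which is precisely why \eqref{eq:local_indicator2} carries no $|\boldsymbol{f}|$ contribution. Otherwise I estimate the pointwise value through the local embedding $\mathbf{W}^{1,p\prime}(T)\hookrightarrow\mathbf{L}^{\infty}(T)$, valid since $p\prime>d$; a scaling argument on the reference element gives
\[
|(\boldsymbol{v}-\mathcal{I}_{\mathscr{T}}\boldsymbol{v})(x_0)|\leq\|\boldsymbol{v}-\mathcal{I}_{\mathscr{T}}\boldsymbol{v}\|_{\mathbf{L}^{\infty}(T)}\lesssim h_T^{1-d/p\prime}\|\nabla\boldsymbol{v}\|_{\mathbf{L}^{p\prime}(T)}.
\]
Since $1-d/p\prime=d/p-(d-1)$, multiplying by $|\boldsymbol{f}|$ produces exactly the weight $(h_T^{d-p(d-1)}|\boldsymbol{f}|^p)^{1/p}$ in \eqref{eq:local_indicator}. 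Collecting all four bounds, taking the supremum over $(\boldsymbol{v},q)$, and invoking Lemma~\ref{lemma:eqErrorRes} yields $\|(\mathbf{e}_{\boldsymbol{u}},e_\pi)\|_{\mathcal{X}}\lesssim\eta_p$. The one delicate point I expect to watch carefully is verifying the nonstandard exponent $d-p(d-1)$ and the node--vanishing property underlying \ref{i}--\ref{ii}; the remaining manipulations are standard residual analysis.
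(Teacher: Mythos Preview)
Your proposal is correct and follows essentially the same route as the paper: reduce to bounding $\|\mathcal{R}\|_{\mathcal{Y}'}$ via Lemma~\ref{lemma:eqErrorRes}, insert $\mathcal{I}_{\mathscr{T}}\boldsymbol{v}$ by Galerkin orthogonality, integrate by parts elementwise, and estimate the four resulting pieces with H\"older and the interpolation bounds of Lemma~\ref{lemma:interp_proper}, treating the Dirac term through the $\mathbf{L}^\infty$ estimate $\|\boldsymbol{v}-\mathcal{I}_{\mathscr{T}}\boldsymbol{v}\|_{\mathbf{L}^\infty(T)}\lesssim h_T^{1-d/p'}\|\nabla\boldsymbol{v}\|_{\mathbf{L}^{p'}(T)}$ and the node--vanishing observation behind \ref{i}--\ref{ii}. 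The only cosmetic difference is that the paper performs integration by parts before invoking Galerkin orthogonality, whereas you reverse the order; the substance is identical.
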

\begin{proof}
We begin the proof by invoking the basic estimate $\beta \|(\mathbf{e}_{\boldsymbol{u}},e_\pi)\|_{\mathcal{X}} \leq \| \mathcal{R}\|_{\mathcal{Y}'}$, which follows immediately from Lemma \ref{lemma:eqErrorRes}. It thus suffices to bound $\| \mathcal{R}\|_{\mathcal{Y}'}$. To accomplish this task, we notice that, in view of definitions \eqref{def:residual} and \eqref{eq:stokes_bilinear_c}, we have for $(\boldsymbol{v},q)\in\mathcal{Y}$ arbitrary,
\begin{equation*}\label{eq:identity_1}
\langle\mathcal{R},(\boldsymbol{v},q)\rangle_{\mathcal{Y}^\prime\times\mathcal{Y}}=\langle\boldsymbol{f}\delta_{x_0},\boldsymbol{v}\rangle-a(\boldsymbol{u}_{\mathscr{T}},\boldsymbol{v})-b(\boldsymbol{v},\pi_\mathscr{T})+b(\boldsymbol{u}_{\mathscr{T}},q).
\end{equation*}
Next, we write $a(\boldsymbol{u}_{\mathscr{T}},\boldsymbol{v})$, $b(\boldsymbol{v},\pi_\mathscr{T})$, and $b(\boldsymbol{u}_\mathscr{T},q)$ as integrals over elements $T \in \mathscr{T}$ and utilize elementwise integration by parts to arrive at
\begin{multline}
\label{eq:identity_2aux}
\langle\mathcal{R},(\boldsymbol{v},q)\rangle_{\mathcal{Y}^\prime\times\mathcal{Y}}
=
\langle\boldsymbol{f}\delta_{x_0},\boldsymbol{v} \rangle
-
\sum_{T\in\mathscr{T}}\int_T(-\Delta \boldsymbol{u}_\mathscr{T} + \nabla \pi_\mathscr{T})\cdot \boldsymbol{v} 
\\
-\sum_{T\in\mathscr{T}}\int_T \text{div}\boldsymbol{u}_{\mathscr{T}} q 
+\sum_{S \in \mathscr{S}}\int_S \llbracket{(\nabla \boldsymbol{u}_\mathscr{T}-\pi_\mathscr{T}\mathbb{I}_{d})\cdot\boldsymbol\nu}\rrbracket\cdot \boldsymbol{v}.
\end{multline}
We invoke the relation $c((\mathbf{e}_{\boldsymbol{u}},e_{\pi}),(\mathcal{I}_\mathscr{T}\boldsymbol{v},0)) = 0$, which follows from Galerkin orthogonality, to arrive at
\begin{multline}\label{eq:identity_2}
\langle\mathcal{R},(\boldsymbol{v},q)\rangle_{\mathcal{Y}^\prime\times\mathcal{Y}}=\langle\boldsymbol{f}\delta_{x_0},\boldsymbol{v}-\mathcal{I}_{\mathscr{T}}\boldsymbol{v}\rangle-\sum_{T\in\mathscr{T}}\int_T(-\Delta \boldsymbol{u}_\mathscr{T}+\nabla \pi_\mathscr{T})\cdot(\boldsymbol{v}-\mathcal{I}_{\mathscr{T}}\boldsymbol{v}) \\-\sum_{T\in\mathscr{T}}\int_T \text{div}\boldsymbol{u}_\mathscr{T} q 
+\sum_{S \in \mathscr{S}}\int_S \llbracket{(\nabla \boldsymbol{u}_\mathscr{T}-\pi_\mathscr{T}\mathbb{I}_{d})\cdot\boldsymbol\nu}\rrbracket\cdot (\boldsymbol{v}-\mathcal{I}_{\mathscr{T}}\boldsymbol{v})
\\
=: \mathbf{I}- \sum_{T \in \mathscr{T}} \mathbf{II}_T - \sum_{T \in \mathscr{T}} \mathbf{III}_T + \sum_{S \in \mathscr{S}} \mathbf{IV}_S.
\end{multline}
Here, $\mathcal{I}_\mathscr{T}$ denotes the Lagrange interpolation operator; see Section \ref{sec:interpolation} for details. We must immediately mention that, since $\boldsymbol{v} \in  \mathbf{W}^{1,p\prime}(\Omega)$, with $p\prime> d$, we have that $\mathbf{W}^{1,p\prime}(\Omega) \hookrightarrow \mathbf{C}(\bar \Omega)$. Consequently, $\mathcal{I}_\mathscr{T} \boldsymbol{v}$ is well--defined. We now bound each term on the right--hand side of \eqref{eq:identity_2} separately.

We estimate $\mathbf{I}$. Let $T \in \mathscr{T}$ such that $x_0 \in T$. Notice that, if conditions \ref{i} or \ref{ii} do not hold, then $\mathbf{I} = \langle\boldsymbol{f}\delta_{x_0},\boldsymbol{v}-\mathcal{I}_{\mathscr{T}}\boldsymbol{v}\rangle = \boldsymbol{f} \cdot (\boldsymbol{v}-\mathcal{I}_{\mathscr{T}}\boldsymbol{v}) (x_0)$ vanishes. Assume that $x_0 \in T$ and conditions  \ref{i} or \ref{ii} hold. If this is the case, standard interpolation error estimates for the Lagrange operator $\mathcal{I}_{\mathscr{T}}$ yields
\begin{equation*}
\mathbf{I}
\lesssim 
|\boldsymbol f|\|\boldsymbol{v}-\mathcal{I}_{\mathscr{T}}\boldsymbol{v}\|_{\mathbf{L}^{\infty}(T)} 
\lesssim 
h_{T}^{1-d/p\prime}|\boldsymbol f|\|\nabla\boldsymbol{v}\|_{\mathbf{L}^{p\prime}(T)}.
\end{equation*}

The control of the term $\mathbf{II}_T$ follows from H\"older's inequality and \eqref{eq:estimate_element}. In fact, for $T \in \mathscr{T}$, we have
\begin{equation*}
\mathbf{II}_T \lesssim  h_T \|\Delta\boldsymbol{u}_{\mathscr{T}}-\nabla\pi_{\mathscr{T}}\|_{\mathbf{L}^p(T)}\|\nabla\boldsymbol{v}\|_{\mathbf{L}^{p\prime}(T)}.
\end{equation*}

The control of the term $\mathbf{III}_T$ follows from a basic application of H\"older's inequality: If $T \in \mathscr{T}$, then
\begin{equation*}
\mathbf{III}_T
\leq \|\text{div}\boldsymbol{u}_{\mathscr{T}}\|_{\mathrm{L}^{p}(T)}  \|q\|_{\mathrm{L}^{p\prime}(T)}.
\end{equation*}

We now estimate the term $\mathbf{IV}_S$. To accomplish this task, we first apply H\"older's inequality and then the estimate \eqref{eq:estimate_side}. These arguments yield, for $S \in \mathscr{S}$,
\begin{align*}
\mathbf{IV}_S
& \leq 
\| \llbracket{(\nabla \boldsymbol{u}_\mathscr{T}-\mathbb{I}_{d}\pi_\mathscr{T})\cdot\boldsymbol\nu}\rrbracket\|_{\mathbf{L}^p(S)}\| \boldsymbol{v}-\mathcal{I}_{\mathscr{T}}\boldsymbol{v}\|_{\mathbf{L}^{p\prime}(S)} 
\\ 
& \lesssim
h_T^{\frac{1}{p}} \| \llbracket{(\nabla \boldsymbol{u}_\mathscr{T}-\mathbb{I}_{d}\pi_\mathscr{T})\cdot\boldsymbol\nu}\rrbracket\|_{\mathbf{L}^p(S)} 
\|\nabla\boldsymbol{v}\|_{\mathbf{L}^{p\prime}(\mathcal{N}_S)}.
\end{align*}

Consequently, replacing the estimates  obtained for $\mathbf{I}$, $\mathbf{II}_T$, $\mathbf{III}_T$, and $\mathbf{IV}_S$ into  \eqref{eq:identity_2}, we obtain
\begin{multline*}
\label{eq:reli_estimate_aux}
\langle\mathcal{R},(\boldsymbol{v},q)\rangle_{\mathcal{Y}^\prime\times\mathcal{Y}}  \lesssim 
\sum_{T \ni x_0} h_{T}^{1-d/p\prime}|\boldsymbol f|\|\nabla\boldsymbol{v}\|_{\mathbf{L}^{p\prime}(T)}
+\sum_{T \in \mathscr{T}} \|\text{div}\boldsymbol{u}_{\mathscr{T}}\|_{\mathrm{L}^{p}(T)}  \| q\|_{\mathrm{L}^{p\prime}(T)}
\\
+ \sum_{T \in \mathscr{T} } h_T^{\frac{1}{p}} \| \llbracket{(\nabla \boldsymbol{u}_\mathscr{T}-\mathbb{I}_{d}\pi_\mathscr{T})\cdot\boldsymbol\nu}\rrbracket\|_{\mathbf{L}^p(\partial T \setminus \partial \Omega)}  \| \nabla \boldsymbol{v}\|_{\mathbf{L}^{p\prime}(\mathcal{N}_T^{*})} 
\\
+ \sum_{T \in \mathscr{T} } h_T \|\Delta\boldsymbol{u}_{\mathscr{T}}-\nabla\pi_{\mathscr{T}}\|_{\mathbf{L}^p(T)}  \| \nabla \boldsymbol{v}\|_{\mathbf{L}^{p\prime}(\mathcal{N}_T^{*})}
\\
\lesssim \eta_{p} \left( \sum_{T \in \mathscr{T}} ( \| q\|_{\mathrm{L}^{p\prime}(T)} + \| \nabla \boldsymbol{v}\|_{\mathbf{L}^{p\prime}(\mathcal{N}_T)} )^{p\prime} \right)^{\frac{1}{p\prime}}
\lesssim \eta_{p} \| (\boldsymbol{v},q) \|_{\mathcal{Y}}
\end{multline*}
where we have used H\"older's inequality, the definition of the local error indicators $\eta_{p,T}$, given in \eqref{eq:local_indicator} and \eqref{eq:local_indicator2} and the finite overlapping property of stars. This concludes the proof.\qed
\end{proof}

\subsection{Efficiency}
In this section we study the efficiency properties of the a posteriori error estimators $\eta_p$, defined in \eqref{eq:error_estimator}, by examining each of their contributions separately. To accomplish this task, we will invoke standard residual estimation techniques based in suitable bubble functions. Before proceeding with such analysis, we introduce the following notation: for an edge/face or triangle/tetrahedron $G$, let $\mathcal{V}(G)$ be the set of vertices of $G$. With this notation at hand, we define, for  $T\in\mathscr{T}$ and $S\in\mathscr{S}$, the standard element and edge bubble functions \cite{MR993474,MR3059294,MR1885308}
\begin{equation*}\label{def:standard_bubbles}
\varphi^{}_{T}=
(d+1)^{(d+1)}\prod_{\textsc{v} \in \mathcal{V}(T)} \lambda^{}_{\textsc{v}},\quad
\varphi^{}_{S}=
d^{d} \prod_{\textsc{v} \in \mathcal{V}(S)}\lambda^{}_{\textsc{v}}|^{}_{T'}\quad \text{with } T' \subset \mathcal{N}_{S},
\end{equation*}
respectively, where $\lambda_{\textsc{v}}$ are the barycentric coordinates of $T$. We recall that $\mathcal{N}_{S}$ corresponds to the patch composed of the two elements of $\mathscr{T}$ sharing $S$.

We also introduce, inspired in \cite[Section 3]{MR2262756} and \cite[Section 3]{MR3237857}, the following bubble functions. Given $T\in\mathscr{T}$, we define $\phi_T$ as
\begin{equation}\label{def:bubble_element}
\phi_T(x):=
\begin{cases}
\varphi_T(x) \frac{|x-x_0|^{2}}{h_T^2} &\text{ if } x_0\in T,\\
\varphi_T(x) &\text{ if } x_0\not\in T.
\end{cases}
\end{equation}
Given $S\in \mathscr{S}$, we define $\phi_S$ as
\begin{equation}\label{def:bubble_side}
\phi_S(x):=
\begin{cases}
\varphi_S(x) \frac{|x-x_0|^{2}}{h_S^2} &\text{ if } x_0\in \mathring{\mathcal{N}}_S,\\
\varphi_S(x) &\text{ if } x_0\not\in \mathring{\mathcal{N}}_S,
\end{cases}
\end{equation}
where $\mathring{\mathcal{N}}_S$ denotes the interior of $\mathcal{N}_S$. We recall that the Dirac measure $\delta_{x_0}$ is supported at $x_0 \in \Omega$: it can thus be supported on the interior, an edge, or a vertex of an element $T$ of the triangulation $\mathscr{T}$.

Given $S\in\mathscr{S}$, we introduce the continuation operator $\Pi:\mathrm{L}^\infty(S)\rightarrow \mathrm{L}^\infty(\mathcal{N}_S)$ as defined in \cite[Section 3]{MR1650051}. This operator maps polynomials onto piecewise polynomials of the same degree. With this operator at hand, we provide the following result. 

\begin{lemma}[bubble function properties]\label{lemma:bubble_properties}
Let $T\in\mathscr{T}$, $S\in\mathscr{S}$, $m \in \mathbb{N}$, and $r\in(1,\infty)$. Then, the bubble functions $\phi_T$ and $\phi_S$ introduced in \eqref{def:bubble_element} and \eqref{def:bubble_side}, respectively, satisfy
\begin{equation}\label{eq:bubble_property_1}
\|\phi_T\|_{\mathrm{W}^{m,r}(T)}  \lesssim h_T^{d/r-m}.
\end{equation}

In addition, if $\boldsymbol{v}_{\mathscr{T}}|^{}_T\in[\mathbb{P}_2(T)]^d$ and $\boldsymbol{w}_\mathscr{T}|^{}_S\in[\mathbb{P}_3(S)]^d$, then
\begin{align}\label{eq:bubble_property_3}
\|\boldsymbol{v}_{\mathscr{T}}\|_{\mathbf{L}^r(T)} &\lesssim \|\boldsymbol{v}_{\mathscr{T}} \phi_T^{\frac{1}{r}}\|_{\mathbf{L}^r(T)} \lesssim \|\boldsymbol{v}_{\mathscr{T}}\|_{\mathbf{L}^r(T)}, 
\\
\|\boldsymbol{w}_\mathscr{T}\|_{\mathbf{L}^r(S)} &\lesssim \|\boldsymbol{w}_\mathscr{T} \phi_S^{\frac{1}{r}}\|_{\mathbf{L}^r(S)} \lesssim \|\boldsymbol{w}_\mathscr{T}\|_{\mathbf{L}^r(S)},\label{eq:bubble_property_4}
\\
\|\phi_S \Pi \boldsymbol{w}_\mathscr{T}\|_{\mathbf{L}^r(T)} & \lesssim h_T^{\frac{1}{r}}\|\boldsymbol{w}_\mathscr{T}\|_{\mathbf{L}^r(S)}.
\label{eq:bubble_property_2}
\end{align}
\end{lemma}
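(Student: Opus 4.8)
The plan is to treat separately the two regimes in the definitions \eqref{def:bubble_element} and \eqref{def:bubble_side}. When $x_0 \notin T$ (respectively $x_0 \notin \mathring{\mathcal{N}}_S$) we have $\phi_T = \varphi_T$ (respectively $\phi_S = \varphi_S$), so every assertion collapses to the classical properties of the standard element and side bubbles $\varphi_T$, $\varphi_S$; see \cite{MR993474,MR3059294,MR1885308}. Hence the whole difficulty lies in the bubbles carrying the quadratic factor $|x-x_0|^2/h_T^2$ (resp. $|x-x_0|^2/h_S^2$), and I would concentrate on those. I first record the pointwise bounds $0 \le \phi_T \le 1$ and $0 \le \phi_S \lesssim 1$: indeed $\varphi_T, \varphi_S \le 1$, while $|x-x_0| \le h_T$ for $x \in T \ni x_0$ and $|x - x_0| \lesssim h_S$ for $x \in \mathcal{N}_S$ when $x_0 \in \mathring{\mathcal{N}}_S$, by shape regularity. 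These bounds immediately give the upper estimates in \eqref{eq:bubble_property_3} and \eqref{eq:bubble_property_4}, since then $\phi_T^{1/r} \le 1$ and $\phi_S^{1/r} \lesssim 1$. For \eqref{eq:bubble_property_1} with $x_0 \in T$ I would use the Leibniz rule: writing $g(x) := |x-x_0|^2/h_T^2$, this is a quadratic polynomial with $|D^\gamma g| \lesssim h_T^{-|\gamma|}$ for $|\gamma| \le 2$ and $D^\gamma g \equiv 0$ for $|\gamma| \ge 3$; combined with the scaling bound $|D^\beta \varphi_T| \lesssim h_T^{-|\beta|}$, the product rule yields $|D^\alpha \phi_T| \lesssim h_T^{-|\alpha|}$ pointwise for every $|\alpha| \le m$, and integration over $T$, whose measure is comparable to $h_T^d$, delivers $\|\phi_T\|_{\mathrm{W}^{m,r}(T)} \lesssim h_T^{d/r - m}$.

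The core of the lemma is the lower bound in \eqref{eq:bubble_property_3}, which I would obtain by a scaling-plus-compactness argument. Mapping $T$ to the reference simplex $\hat T$ through the affine transformation $F_T$, the function $\boldsymbol{v}_{\mathscr{T}}$ pulls back to a polynomial $\hat{\boldsymbol{v}} \in [\mathbb{P}_2(\hat T)]^d$, the bubble $\varphi_T$ to the fixed reference bubble $\hat\varphi$, and, since $x_0 = F_T(\hat x_0)$ with $\hat x_0 \in \hat T$, shape regularity gives $|x-x_0|^2/h_T^2 \sim |\hat x - \hat x_0|^2$. After the change of variables the claim becomes
\[
\int_{\hat T} |\hat{\boldsymbol{v}}|^r \lesssim \int_{\hat T} |\hat{\boldsymbol{v}}|^r\, \hat\varphi(\hat x)\,|\hat x - \hat x_0|^2 \, \mathrm{d}\hat x,
\]
uniformly in $\hat{\boldsymbol{v}}$ and in $\hat x_0 \in \hat T$. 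I would view the quotient of the two sides as a function on the product of the unit sphere of $[\mathbb{P}_2(\hat T)]^d$ (compact, by finite dimensionality) and of $\hat T$. It is continuous and strictly positive, because the weight $\hat\varphi(\hat x)|\hat x - \hat x_0|^2$ vanishes only on $\partial \hat T$ and at the single point $\hat x_0$, hence is positive almost everywhere, while $|\hat{\boldsymbol{v}}|^r$ vanishes on a null set. Compactness then furnishes a uniform positive lower bound, and undoing the scaling gives \eqref{eq:bubble_property_3}.

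The main obstacle, and the only genuinely new point, arises in \eqref{eq:bubble_property_4} when $x_0 \in \mathring{\mathcal{N}}_S$ but $x_0 \notin S$, so the singularity sits off the face. Decomposing $x_0$ into its orthogonal projection onto the affine hull of $S$ plus its signed distance, the pulled-back weight on the reference $(d-1)$-simplex $\hat S$ becomes comparable to $|\hat x - \hat x_0|^2 + t^2$, where now $\hat x_0 \in \mathbb{R}^{d-1}$ and $t \ge 0$ range over a \emph{non-compact} parameter set. I would handle this by observing that the weight, and therefore the quotient functional, tends to $+\infty$ as $|\hat x_0| + t \to \infty$ uniformly on $\hat S$; hence the infimum over the parameters is attained on a bounded set, where the previous compactness and almost-everywhere positivity argument applies and again yields a uniform positive constant. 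This delivers the lower bound in \eqref{eq:bubble_property_4}.

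Finally, for \eqref{eq:bubble_property_2} I would combine $\phi_S \lesssim 1$ with the $\mathrm{L}^\infty$-stability of the continuation operator $\Pi$, namely $\|\Pi \boldsymbol{w}_{\mathscr{T}}\|_{\mathbf{L}^\infty(\mathcal{N}_S)} \lesssim \|\boldsymbol{w}_{\mathscr{T}}\|_{\mathbf{L}^\infty(S)}$. Estimating crudely,
\[
\|\phi_S \Pi \boldsymbol{w}_{\mathscr{T}}\|_{\mathbf{L}^r(T)} \lesssim \|\Pi \boldsymbol{w}_{\mathscr{T}}\|_{\mathbf{L}^\infty(T)}\, |T|^{1/r} \lesssim h_T^{d/r}\, \|\boldsymbol{w}_{\mathscr{T}}\|_{\mathbf{L}^\infty(S)},
\]
and an inverse estimate on the finite-dimensional space $[\mathbb{P}_3(S)]^d$, namely $\|\boldsymbol{w}_{\mathscr{T}}\|_{\mathbf{L}^\infty(S)} \lesssim h_S^{-(d-1)/r}\|\boldsymbol{w}_{\mathscr{T}}\|_{\mathbf{L}^r(S)}$, together with $h_S \sim h_T$, collapse the powers of $h$ to $h_T^{1/r}$, which is \eqref{eq:bubble_property_2}.
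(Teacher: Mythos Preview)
Your proof is correct and in fact supplies considerably more detail than the paper, which for \eqref{eq:bubble_property_3}--\eqref{eq:bubble_property_2} merely writes ``follow standard arguments; for brevity, we skip the details.'' Your scaling-plus-compactness argument is precisely the standard one being alluded to.

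For \eqref{eq:bubble_property_1} the paper takes a slightly different route: rather than expanding via the Leibniz rule, it observes that $\phi_T$ is a polynomial of fixed degree (degree $d+3$ when $x_0\in T$) and invokes the polynomial inverse estimate $\|\phi_T\|_{\mathrm{W}^{m,r}(T)}\lesssim h_T^{-m}\|\phi_T\|_{\mathrm{L}^r(T)}$ from \cite[Lemma~4.5.3]{MR2373954}, then bounds the $\mathrm{L}^r$ norm directly via $0\le\phi_T\le 1$ and $|T|\approx h_T^d$. Your pointwise Leibniz approach is equally valid and arguably more elementary, since it does not rely on recognising $\phi_T$ as a polynomial of bounded degree.

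One minor simplification in your treatment of \eqref{eq:bubble_property_4}: the non-compactness concern is unnecessary. Since $x_0\in\mathring{\mathcal{N}}_S$, shape regularity already confines the scaled parameters $(\hat x_0,t)$ to a bounded set (of diameter controlled by the shape-regularity constant of the family $\mathbb{T}$), so the plain compactness argument used for \eqref{eq:bubble_property_3} applies directly without the detour through behaviour at infinity.
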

\begin{proof}
We derive \eqref{eq:bubble_property_1}. If $x_0 \notin T$, then $\phi_T=\varphi_T$. As a consequence, \eqref{eq:bubble_property_1} follows from standard arguments. If $x_0\in T$, then it follows from \cite[Lemma 4.5.3]{MR2373954} that
\begin{equation*}
\|\phi_T\|_{\mathrm{W}^{m,r}(T)}\lesssim h_T^{-m} \|\phi_T \|_{\mathrm{L}^r(T)}.
\end{equation*}
In view of the definition of $\phi_T$, we invoke properties of the standard bubble function $\varphi_T$ to conclude that
\begin{align*}
\|\phi_T\|_{\mathrm{W}^{m,r}(T)} 
\lesssim h_T^{-m} \left\{\int_T\left(\varphi_T\frac{|x-x_0|^{2}}{h_T^2}\right)^r \right\}^{\frac{1}{r}}
\lesssim h_T^{-m}|T|^{1/r} \lesssim h_T^{d/r-m},
\end{align*}
where we have also used that $|T|\approx h_T^d$. This yields \eqref{eq:bubble_property_1}. 

The estimates \eqref{eq:bubble_property_3}--\eqref{eq:bubble_property_2} follow standard arguments. For brevity, we skip the details.\qed
\end{proof}

We now provide local efficiency estimates for the indicator $\eta_{p,T}$ defined in \eqref{eq:local_indicator}--\eqref{eq:local_indicator2}.

\begin{theorem}[local efficiency]\label{thm:efficiency}
Let $p \in (2d/(d+1) - \varepsilon, d/(d-1))$. Let $(\boldsymbol{u},\pi)\in \mathbf{W}^{1,p}(\Omega)\times \mathrm{L}^{p}(\Omega)/\mathbb{R}$ be the solution to \eqref{eq:weak_Stokes_system} and $(\boldsymbol{u}_\mathscr{T},\pi_\mathscr{T})\in \mathbf{V}(\mathscr{T})\times \mathcal{P}(\mathscr{T})$ its finite element approximation obtained as the solution to \eqref{eq:disc_Stokes_system}. Then, for $T\in \mathscr{T}$, the local error indicator $\eta_{p,T}$, defined in \eqref{eq:local_indicator}--\eqref{eq:local_indicator2}, satisfies that
\begin{equation}\label{eq:local_efficiency}
\eta_{p,T}^{p}
\lesssim
\|\nabla\mathbf{e}_{\boldsymbol{u}}\|_{\mathbf{L}^p(\mathcal{N}_T)}^p+\|e_\pi\|_{\mathrm{L}^p(\mathcal{N}_T)}^p,
\end{equation}
where $\mathcal{N}_T$ is defined in \eqref{def:patch}. The hidden constant is independent of the solution $(\boldsymbol{u},\pi)$, its approximation $(\boldsymbol{u}_\mathscr{T},\pi_\mathscr{T})$, the size of the elements in the mesh $\mathscr{T}$, and $\#\mathscr{T}$.
\end{theorem}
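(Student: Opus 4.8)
The plan is to bound each of the (at most four) contributions to $\eta_{p,T}^p$ separately by the local error, using localized residual identities together with the tailored bubble functions of Lemma \ref{lemma:bubble_properties}. Combining the error--residual identity \eqref{eq:identity_residual_error} with the elementwise integration by parts leading to \eqref{eq:identity_2aux}, I obtain, for every $(\boldsymbol v,q)\in\mathcal Y$,
\[
a(\mathbf e_{\boldsymbol u},\boldsymbol v)+b(\boldsymbol v,e_\pi)-b(\mathbf e_{\boldsymbol u},q)=\langle\boldsymbol f\delta_{x_0},\boldsymbol v\rangle-\sum_{T\in\mathscr T}\int_T\boldsymbol R_T\cdot\boldsymbol v-\sum_{T\in\mathscr T}\int_T\mathrm{div}\,\boldsymbol u_{\mathscr T}\,q+\sum_{S\in\mathscr S}\int_S\boldsymbol J_S\cdot\boldsymbol v,
\]
where $\boldsymbol R_T:=-\Delta\boldsymbol u_{\mathscr T}+\nabla\pi_{\mathscr T}$ and $\boldsymbol J_S:=\llbracket(\nabla\boldsymbol u_{\mathscr T}-\pi_{\mathscr T}\mathbb I_d)\cdot\boldsymbol\nu\rrbracket$. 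Choosing $(\boldsymbol v,q)$ supported on a single element or patch will isolate each term of the indicator.

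The divergence contribution is immediate: since the exact velocity is divergence free, $\mathrm{div}\,\boldsymbol u_{\mathscr T}=-\mathrm{div}\,\mathbf e_{\boldsymbol u}$ on each $T$, whence $\|\mathrm{div}\,\boldsymbol u_{\mathscr T}\|_{\mathrm L^p(T)}\lesssim\|\nabla\mathbf e_{\boldsymbol u}\|_{\mathbf L^p(T)}$. For the interior residual, I would test the localized identity with $\boldsymbol v=\phi_T\boldsymbol R_T$ and $q=0$, using the modified element bubble \eqref{def:bubble_element}. Since $\phi_T$ vanishes on $\partial T$, only the element integrals survive; and, crucially, since $\phi_T(x_0)=0$ whenever $x_0\in T$, the singular term $\langle\boldsymbol f\delta_{x_0},\boldsymbol v\rangle=\boldsymbol f\cdot\phi_T(x_0)\boldsymbol R_T(x_0)$ drops out. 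This reduces matters to $\int_T\phi_T|\boldsymbol R_T|^2=-a(\mathbf e_{\boldsymbol u},\boldsymbol v)-b(\boldsymbol v,e_\pi)$. Using the lower bound in \eqref{eq:bubble_property_3}, the inverse estimate together with \eqref{eq:bubble_property_1} to control $\|\nabla\boldsymbol v\|_{\mathbf L^{p'}(T)}\lesssim h_T^{-1}\|\boldsymbol R_T\|_{\mathbf L^{p'}(T)}$, and the scaling equivalence $\|w\|_{\mathrm L^2(T)}\approx h_T^{d/2-d/p}\|w\|_{\mathrm L^p(T)}$ valid on the finite--dimensional space of elementwise polynomials, a bookkeeping of the powers of $h_T$ yields $h_T\|\boldsymbol R_T\|_{\mathbf L^p(T)}\lesssim\|\nabla\mathbf e_{\boldsymbol u}\|_{\mathbf L^p(T)}+\|e_\pi\|_{\mathrm L^p(T)}$, which is the bound for the first contribution to $\eta_{p,T}^p$.

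The jump contribution is handled analogously over the patch $\mathcal N_S$, testing with $\boldsymbol v=\phi_S\Pi\boldsymbol J_S$ and $q=0$, where $\Pi$ is the continuation operator and $\phi_S$ the modified side bubble \eqref{def:bubble_side}. Again $\phi_S(x_0)=0$ when $x_0\in\mathring{\mathcal N}_S$, so the Dirac term disappears, and the identity reduces to $\int_S\phi_S|\boldsymbol J_S|^2=a(\mathbf e_{\boldsymbol u},\boldsymbol v)+b(\boldsymbol v,e_\pi)+\sum_{T\subset\mathcal N_S}\int_T\boldsymbol R_T\cdot\boldsymbol v$. Invoking \eqref{eq:bubble_property_4}, the estimate \eqref{eq:bubble_property_2}, inverse estimates on each $T\subset\mathcal N_S$, the bound on $\|\boldsymbol R_T\|_{\mathbf L^p(T)}$ just derived, and the corresponding $\mathrm L^r(S)$ scaling equivalences, the same bookkeeping gives $h_S^{1/p}\|\boldsymbol J_S\|_{\mathbf L^p(S)}\lesssim\|\nabla\mathbf e_{\boldsymbol u}\|_{\mathbf L^p(\mathcal N_S)}+\|e_\pi\|_{\mathrm L^p(\mathcal N_S)}$, i.e., the bound for the second contribution.

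The genuinely new and most delicate term is the data contribution $h_T^{d-p(d-1)}|\boldsymbol f|^p$, present precisely when $x_0\in T$ and condition \ref{i} or \ref{ii} holds; this is where I expect the main obstacle. Here the strategy is reversed: rather than annihilating the source, I would isolate it. Since $x_0$ is an interior point of $\Omega$, shape regularity provides a ball $B(x_0,\rho)$ with $\rho\approx h_T$, contained in $\mathcal N_T$ and covered by the elements containing $x_0$. I would take $\boldsymbol v=\boldsymbol f\,\psi$ with $\psi$ a fixed cutoff satisfying $\psi(x_0)=1$, $\operatorname{supp}\psi\subset B(x_0,\rho)$ and $\|\nabla\psi\|_{\mathrm L^\infty}\lesssim h_T^{-1}$, so that $\boldsymbol v\in\mathbf W_0^{1,p'}(\Omega)$ and $\langle\boldsymbol f\delta_{x_0},\boldsymbol v\rangle=|\boldsymbol f|^2$. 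Feeding $(\boldsymbol v,0)$ into the localized identity gives $|\boldsymbol f|^2=a(\mathbf e_{\boldsymbol u},\boldsymbol v)+b(\boldsymbol v,e_\pi)+\sum_{T}\int_T\boldsymbol R_T\cdot\boldsymbol v-\sum_{S}\int_S\boldsymbol J_S\cdot\boldsymbol v$. Using $\|\nabla\boldsymbol v\|_{\mathbf L^{p'}(\mathcal N_T)}\lesssim|\boldsymbol f|\,h_T^{d/p'-1}$ and $\|\boldsymbol v\|_{\mathbf L^{p'}(\mathcal N_T)}\lesssim|\boldsymbol f|\,h_T^{d/p'}$, together with the residual bounds already established (which absorb the $\boldsymbol R_T$ and $\boldsymbol J_S$ terms), I arrive at $|\boldsymbol f|\lesssim h_T^{d/p'-1}(\|\nabla\mathbf e_{\boldsymbol u}\|_{\mathbf L^p(\mathcal N_T)}+\|e_\pi\|_{\mathrm L^p(\mathcal N_T)})$. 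Since $(1-d/p')p=d-p(d-1)$, raising to the power $p$ produces exactly $h_T^{d-p(d-1)}|\boldsymbol f|^p\lesssim\|\nabla\mathbf e_{\boldsymbol u}\|_{\mathbf L^p(\mathcal N_T)}^p+\|e_\pi\|_{\mathrm L^p(\mathcal N_T)}^p$. The delicate points are precisely that the cutoff must live in $\mathcal N_T$ so that only the local error appears, that the modified bubbles $\phi_T,\phi_S$ cannot be used here because the source must be probed by a function not vanishing at $x_0$, and that the $h_T$ exponents must combine to the nonstandard value $d-p(d-1)$. Collecting the four bounds and using the finite overlap of the patches $\mathcal N_T$ then yields \eqref{eq:local_efficiency}.
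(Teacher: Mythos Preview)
Your proof is correct and follows essentially the same five--step strategy as the paper: the element and side residuals are handled with the modified bubbles $\phi_T,\phi_S$ that annihilate the Dirac source, the divergence term via $\mathrm{div}\,\boldsymbol u=0$, and the data term $h_T^{d-p(d-1)}|\boldsymbol f|^p$ by testing with a cutoff concentrated at $x_0$. The only cosmetic difference is that the paper tests the source term with $\mu\,|\boldsymbol f|^{p-1}\mathrm{sign}(\boldsymbol f)$ (yielding $|\boldsymbol f|^p$ directly) rather than your $\boldsymbol f\,\psi$ (yielding $|\boldsymbol f|^2$ and then dividing); the resulting $h_T$ bookkeeping is identical.
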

\begin{proof}
We proceed in five steps.

\underline{Step 1.} Let $T\in\mathscr{T}$. In this step we bound the term $h_T^p\|\Delta  \boldsymbol{u}_\mathscr{T}-\nabla \pi_\mathscr{T}\|_{\mathbf{L}^p(T)}$ in \eqref{eq:local_indicator}--\eqref{eq:local_indicator2}. To accomplish this task and also to simplify the presentation of the material, we define $\mathbf{R}_T:=(\Delta  \boldsymbol{u}_\mathscr{T}-\nabla \pi_\mathscr{T})|^{}_T$ and $\boldsymbol\Phi_T:=\phi_T \mathbf{R}_T$. We recall that $\phi_T$ is given as in \eqref{def:bubble_element}. Now, set $\boldsymbol{v}=\boldsymbol\Phi_T$ and $q=0$ in \eqref{eq:identity_2aux}. This yields
\begin{equation*}\label{eq:eff_1}
|\langle\mathcal{R},(\boldsymbol\Phi_T,0)\rangle_{\mathcal{Y}^\prime\times\mathcal{Y}}| = 
\left|\int_T(\Delta \boldsymbol{u}_\mathscr{T}-\nabla \pi_\mathscr{T})\cdot \boldsymbol\Phi_T\right|=\|\mathbf{R}_T\phi_T^{\frac{1}{2}}\|_{\mathbf{L}^2(T)}^2.
\end{equation*}
Observe that $\langle\boldsymbol{f}\delta_{x_0},\boldsymbol\Phi_T \rangle = 0$. Now, set $\boldsymbol{v}=\boldsymbol\Phi_T$ and $q=0$ in \eqref{eq:identity_residual_error} and conclude that $\langle\mathcal{R},(\boldsymbol\Phi_T,0)\rangle_{\mathcal{Y}^\prime\times\mathcal{Y}} =c((\mathbf{e}_{\boldsymbol{u}},e_\pi),(\boldsymbol\Phi_T,0))$. We thus use \eqref{eq:bubble_property_3} to derive
\begin{multline}\label{eq:eff_2}
\|\mathbf{R}_T\|_{\mathbf{L}^2(T)}^2 \lesssim \|\mathbf{R}_T\phi_T^{\frac{1}{2}}\|_{\mathbf{L}^2(T)}^2
\lesssim 
|a(\mathbf{e}_{\boldsymbol{u}},\boldsymbol\Phi_T)+b(\boldsymbol\Phi_T,e_\pi)| \\
\leq \|\nabla\mathbf{e}_{\boldsymbol{u}}\|_{\mathbf{L}^p(T)}\|\nabla \boldsymbol\Phi_T\|_{\mathbf{L}^{p\prime}(T)}+ \|\text{div }\boldsymbol\Phi_T\|_{\mathrm{L}^{p\prime}(T)}\|e_\pi\|_{\mathrm{L}^p(T)},
\end{multline}
where, to obtain the last inequality, we have used H\"older's inequality.

On the other hand, notice that
\begin{equation*}\label{eq:grad_bubble_element}
\nabla \boldsymbol \Phi_T
=
\begin{bmatrix}
\nabla \phi_T \mathbf{R}_{1,T}+\phi_T \nabla\mathbf{R}_{1,T}, \ldots, \nabla \phi_T \mathbf{R}_{d,T}+\phi_T \nabla\mathbf{R}_{d,T}
\end{bmatrix}^{\intercal},
\end{equation*}
where $\intercal$ denotes the transpose operator. We invoke the properties that $\phi_T$ satisfies, which are stated in Lemma \ref{lemma:bubble_properties}, and standard inverse inequalities \cite[Lemma 4.5.3]{MR2373954} to arrive at
\begin{align*}
\|\nabla \boldsymbol\Phi_T\|_{\mathbf{L}^{p\prime}(T)}
&\lesssim
\|\nabla \phi_T \mathbf{R}_{T}\|_{\mathbf{L}^{p\prime}(T)}+\|\phi_T \nabla\mathbf{R}_{T}\|_{\mathbf{L}^{p\prime}(T)}\\
&\lesssim
\|\nabla \phi_T\|_{\mathbf{L}^\infty(T)}\|\mathbf{R}_{T}\|_{\mathbf{L}^{p\prime}(T)}+\|\nabla\mathbf{R}_{T}\|_{\mathbf{L}^{p\prime}(T)}\\
&\lesssim h_T^{-1}\|\mathbf{R}_{T}\|_{\mathbf{L}^{p\prime}(T)}\lesssim h_T^{-1}h_T^{d/p\prime-d/2}\|\mathbf{R}_{T}\|_{\mathbf{L}^{2}(T)}.
\end{align*}
Replacing this estimate into \eqref{eq:eff_2} yields
\begin{equation}\label{eq:eff_2_1}
\|\mathbf{R}_T\|_{\mathbf{L}^2(T)}^2 
\lesssim
\left(\|\nabla\mathbf{e}_{\boldsymbol{u}}\|_{\mathbf{L}^p(T)}+ \|e_\pi\|_{\mathrm{L}^p(T)}\right)h_T^{-1}h_T^{d/p\prime-d/2}\|\mathbf{R}_{T}\|_{\mathbf{L}^{2}(T)}.
\end{equation}
Now, on the basis of the inverse estimate $\|\mathbf{R}_T\|_{\mathbf{L}^{p}(T)}\lesssim h_T^{d/p-d/2} \|\mathbf{R}_T\|_{\mathbf{L}^{2}(T)}$, \eqref{eq:eff_2_1} reveals that
\begin{equation}\label{eq:residual_estimate_aux}
\|\mathbf{R}_T\|_{\mathbf{L}^{p}(T)}
\lesssim
h_T^{-1}\left(\|\nabla\mathbf{e}_{\boldsymbol{u}}\|_{\mathbf{L}^p(T)}+ \|e_\pi\|_{\mathrm{L}^p(T)}\right).
\end{equation}
This allows us to conclude that
\begin{equation}\label{eq:residual_estimate}
h_T^p\|\mathbf{R}_T\|_{\mathbf{L}^p(T)}^p 
\lesssim
\|\nabla\mathbf{e}_{\boldsymbol{u}}\|_{\mathbf{L}^p(T)}^p + \|e_\pi\|_{\mathrm{L}^p(T)}^p.
\end{equation}

\underline{Step 2.} Let $T\in\mathscr{T}$. The control of the term $\|\text{div }\boldsymbol{u}_\mathscr{T}\|_{\mathrm{L}^p(T)}$ in \eqref{eq:local_indicator}--\eqref{eq:local_indicator2} follows from the incompressibility condition
$\text{div }\boldsymbol{u}=0$. In fact,
\begin{equation}
\label{eq:divergence_estimate}
\|\text{div }\boldsymbol{u}_\mathscr{T}\|_{\mathrm{L}^p(T)}^p
=
\|\text{div }\mathbf{e}_{\boldsymbol{u}}\|_{\mathrm{L}^p(T)}^p
\lesssim
\|\nabla\mathbf{e}_{\boldsymbol{u}}\|_{\mathbf{L}^p(T)}^p.
\end{equation} 

\underline{Step 3.} Let $T\in\mathscr{T}$ and $S\in \mathscr{S}_T$. We bound $h_T\|\llbracket{(\nabla\boldsymbol{u}_{\mathscr{T}}-\mathbb{I}_{d}\pi_{\mathscr{T}})\cdot\boldsymbol{\nu}}\rrbracket \|_{\mathbf{L}^p(S)}^p$ in \eqref{eq:local_indicator}--\eqref{eq:local_indicator2}. To simplify the presentation of the material, we define
\[
 \mathbf{J}_S=\llbracket{(\nabla\boldsymbol{u}_{\mathscr{T}}-\mathbb{I}_{d}\pi_{\mathscr{T}})\cdot\boldsymbol{\nu}}\rrbracket, \quad 
 \boldsymbol\Phi_S:=\phi_S  \mathbf{J}_S.
\]
Set $\boldsymbol{v}=\boldsymbol\Phi_S$ and $q=0$ in \eqref{eq:identity_residual_error} and invoke \eqref{eq:identity_2aux}. This yields
\begin{multline*}
c((\mathbf{e}_{\boldsymbol{u}},e_\pi),(\boldsymbol\Phi_S,0)) 
= \langle\mathcal{R},(\boldsymbol\Phi_S,0)\rangle_{\mathcal{Y}^\prime\times\mathcal{Y}} =  
\sum_{{T^\prime}\in\mathcal{N}_S}\int_{T^\prime}(\Delta \boldsymbol{u}_\mathscr{T}-\nabla \pi_\mathscr{T})\cdot\boldsymbol\Phi_S
\\
+\int_S \llbracket{(\nabla \boldsymbol{u}_\mathscr{T}-\pi_\mathscr{T}\mathbb{I}_{d})\cdot\boldsymbol\nu}\rrbracket\cdot \boldsymbol\Phi_S
=
\sum_{{T^\prime}\in\mathcal{N}_S}\int_{T^\prime}\mathbf{R}_{T^\prime}\cdot\boldsymbol\Phi_S 
+\int_S \mathbf{J}_S\cdot \boldsymbol\Phi_S.
\end{multline*}
We recall that $\mathbf{R}_T:=(\Delta  \boldsymbol{u}_\mathscr{T}-\nabla \pi_\mathscr{T})|^{}_T$. We now use that $\boldsymbol\Phi_S:=\phi_S  \mathbf{J}_S$ and invoke estimate \eqref{eq:bubble_property_4} and H\"older's inequality, to arrive at
\begin{multline}\label{eq:eff_3}
\|\mathbf{J}_S\|_{\mathbf{L}^2(S)}^2
\lesssim 
\|\mathbf{J}_S \phi_S^{\frac{1}{2}}\|_{\mathbf{L}^2(S)}^2
\lesssim 
\sum_{{T^\prime}\in\mathcal{N}_S}\left(\|\nabla\mathbf{e}_{\boldsymbol{u}}\|_{\mathbf{L}^p({T^\prime})}\|\nabla\boldsymbol\Phi_S\|_{\mathbf{L}^{p\prime}({T^\prime})} \right.\\
\left.+ \|e_\pi\|_{\mathrm{L}^p({T^\prime})} \|\text{div }\boldsymbol\Phi_S\|_{\mathrm{L}^{p\prime}({T^\prime})} + \|\mathbf{R}_{T^\prime}\|_{\mathbf{L}^p({T^\prime})}\|\boldsymbol \Phi_S\|_{\mathbf{L}^{p\prime}({T^\prime})}  \right).
\end{multline}
This estimate in conjunction with \eqref{eq:residual_estimate_aux} and an inverse inequality yield
\begin{equation*}\label{eq:eff_4aux}
\|\mathbf{J}_S\|_{\mathbf{L}^2(S)}^2
\lesssim 
\sum_{{T^\prime}\in\mathcal{N}_S} h_{T'}^{-1}\left(\|\nabla\mathbf{e}_{\boldsymbol{u}}\|_{\mathbf{L}^p({T^\prime})}+\|e_\pi\|_{\mathrm{L}^p({T^\prime})} \right) \|\boldsymbol \Phi_S\|_{\mathbf{L}^{p\prime}({T^\prime})}.
\end{equation*}
We now notice that $ \|\boldsymbol \Phi_S\|_{\mathbf{L}^{p\prime}({T^\prime})} \approx h_{T'}^{1/p'} \|\boldsymbol \Phi_S\|_{\mathbf{L}^{p\prime}({S})}$. This implies that
\begin{equation*}\label{eq:eff_4}
\|\mathbf{J}_S\|_{\mathbf{L}^2(S)}^2
\lesssim 
\sum_{{T^\prime}\in\mathcal{N}_S} h_{T'}^{-1/p} \left( \|\nabla\mathbf{e}_{\boldsymbol{u}}\|_{\mathbf{L}^p({T^\prime})} +\|e_\pi\|_{\mathrm{L}^p({T^\prime})} \right) \|\mathbf{J}_S\|_{\mathbf{L}^{p\prime}(S)}.
\end{equation*}
We thus invoke the estimate $\|\mathbf{J}_S\|_{\mathbf{L}^{p\prime}(S)} \lesssim h_{T'}^{(d-1)(1/{p\prime}-1/2)} \|\mathbf{J}_S\|_{\mathbf{L}^{2}(S)}$, which follows from a scaled--trace inequality and an inverse estimate, to arrive at
\begin{equation*}
\|\mathbf{J}_S\|_{\mathbf{L}^2(S)}^2
\lesssim \hspace{-0.2cm}
\sum_{{T^\prime}\in\mathcal{N}_S}\left(\|\nabla\mathbf{e}_{\boldsymbol{u}}\|_{\mathbf{L}^p({T^\prime})}+\|e_\pi\|_{\mathrm{L}^p({T^\prime})}
\right)h_{T'}^{-1/p  + (d-1)(1/{p\prime}-1/2)} \|\mathbf{J}_S\|_{\mathbf{L}^{2}(S)}.
\end{equation*}
This and
$
\|\mathbf{J}_S\|_{\mathbf{L}^{p}(S)} \lesssim h_{T'}^{(d-1)(1/p-1/2)} \|\mathbf{J}_S\|_{\mathbf{L}^{2}(S)},
$
allow us to arrive at
\begin{equation}\label{eq:jump_estimate}
h_T\|\mathbf{J}_S\|_{\mathbf{L}^p(S)}^p
\lesssim 
\sum_{{T^\prime}\in\mathcal{N}_S}\left(\|\nabla\mathbf{e}_{\boldsymbol{u}}\|_{\mathbf{L}^p({T^\prime})}^p+\|e_\pi\|_{\mathrm{L}^p({T^\prime})}^p\right).
\end{equation}

\underline{Step 4.} Let $T\in\mathscr{T}$. In this step we bound the remaining term $h_T^{d-p(d-1)} |\boldsymbol{f}|^p$ in \eqref{eq:local_indicator}. If $T\cap \{x_0\}=\emptyset$, then the desired estimate \eqref{eq:local_efficiency} follows directly from \eqref{eq:residual_estimate}, \eqref{eq:divergence_estimate}, and \eqref{eq:jump_estimate}. If $T \cap \{x_0\} \neq \emptyset$, and \ref{i} and \ref{ii} hold, the indicator $\eta_{p,T}$ contains the term $h_{_T}^{d-p(d-1)} |\boldsymbol{f}|^p$.  To control this term, we invoke the smooth function $\mu$, whose construction we owe to \cite[Section 3]{MR2262756} and is such that
\begin{equation*}
\mathcal{S}_\mu:=\text{supp}(\mu)\subset \mathcal{N}_T, \quad
\mu(x_0)=1,\quad \|\mu\|_{\mathrm{L}^\infty(\mathcal{S}_\mu)}=1,\quad \|\nabla \mu \|_{\mathrm{L}^\infty(\mathcal{S}_\mu)}\lesssim h_T^{-1}.
\end{equation*}
We also have the properties
\begin{equation}\label{eq:properties_bubble_mu}
\|\mu\|_{\mathrm{L}^{p\prime}(\mathcal{N}_T)}\lesssim h_T^{d/{p\prime}}, \qquad 
\|\mu\|_{\mathrm{L}^{p\prime}(S)}\lesssim h_T^{(d-1)/{p\prime}}.
\end{equation}

With this smooth function at hand, we define $\mathbf{F}=\mu| \boldsymbol{f}|^{p-1}\text{sign}( \boldsymbol{f})$. Here, the involved operations, i.e., the power, the absolute value and the sign function, must be understood componentwise. Define 
\begin{equation*}
 \mathscr{S}(\mathcal{N}_T):=\{S\in\mathscr{S}: S\in \partial T^\prime, S\not\in \partial\mathcal{N}_T, T^\prime\in \mathcal{N}_T\}.
\end{equation*}

Set $\boldsymbol{v}=\mathbf{F}$ and $q=0$ in \eqref{eq:identity_residual_error}. Invoke the identity \eqref{eq:identity_2aux} to arrive at
\begin{equation*}
c((\mathbf{e}_{\boldsymbol{u}},e_\pi),(\mathbf{F},0))
=
|\boldsymbol{f}|^p+\sum_{{T^\prime}\in\mathcal{N}_T}\int_{T^\prime}\mathbf{R}_{T^\prime}\cdot\mathbf{F}
+\sum_{S\in\mathscr{S}(\mathcal{N}_T)}\int_S \mathbf{J}_S\cdot \mathbf{F}.
\end{equation*}
Invoking H\"older’s inequality and suitable estimates for the function $\mu$, which are stated in \eqref{eq:properties_bubble_mu}, we obtain that
\begin{multline*}
|\boldsymbol{f}|^p \lesssim 
\sum_{{T^\prime}\in\mathcal{N}_T}\left(\|\mathbf{R}_{T^\prime}\|_{\mathbf{L}^{p}(T^\prime)}\|\mathbf{F}\|_{\mathbf{L}^{p\prime}(T^\prime)}+\|\nabla\mathbf{e}_{\boldsymbol{u}}\|_{\mathbf{L}^{p}(T^\prime)}\|\nabla\mathbf{F}\|_{\mathbf{L}^{p\prime}(T^\prime)}\right.\\
\left.+\|e_\pi\|_{\mathrm{L}^{p}(T^\prime)}\|\text{div}\mathbf{F}\|_{\mathrm{L}^{p\prime}(T^\prime)}\right) + \sum_{S\in\mathscr{S}(\mathcal{N}_T)}\|\mathbf{J}_S\|_{\mathbf{L}^{p}(S)}\|\mathbf{F}\|_{\mathbf{L}^{p\prime}(S)}\\
\lesssim
 \left[
 \sum_{{T^\prime}\in\mathcal{N}_T} 
 \left( h_{T^\prime}^{d/p\prime}\|\mathbf{R}_{T^\prime}\|_{\mathbf{L}^{p}(T^\prime)}+ h_{T^\prime}^{d/p\prime-1}\|\nabla\mathbf{e}_{\boldsymbol{u}}\|_{\mathbf{L}^{p}(T^\prime)} +h_{T^\prime}^{d/p\prime-1}\|e_\pi\|_{\mathrm{L}^{p}(T^\prime)} \right)
\right.\\
\left. +  \sum_{S\in\mathscr{S}(\mathcal{N}_T)}h_T^{(d-1)/{p\prime}}\|\mathbf{J}_S\|_{\mathbf{L}^{p}(S)}
\right]
|\boldsymbol{f}|^{p-1}.
\end{multline*}
In view of \eqref{eq:residual_estimate_aux} and \eqref{eq:jump_estimate}, we arrive at the desired estimate
\begin{equation}\label{eq:vector_estimate}
h_T^{d-p(d-1)}|\boldsymbol{f}|^p
\lesssim
\sum_{{T^\prime}\in\mathcal{N}_T}\left(\|\nabla\mathbf{e}_{\boldsymbol{u}}\|_{\mathbf{L}^p({T^\prime})}^p+\|e_\pi\|_{\mathrm{L}^p({T^\prime})}^p\right).
\end{equation}
\underline{Step 5.} Finally, by gathering the estimates \eqref{eq:residual_estimate}, \eqref{eq:divergence_estimate}, \eqref{eq:jump_estimate}, and \eqref{eq:vector_estimate} we arrive at the desired local lower bound \eqref{eq:local_efficiency}.\qed
\end{proof}


\section{A stabilized scheme}\label{sec:stabilized}
In Section \ref{sec:a_posteriori_estimates} we have designed and analyzed a posteriori error estimators for classical low--order inf--sup stable finite element approximations of problem \eqref{eq:weak_Stokes_system}; the involved pairs of finite elements satisfy the compatibility condition \eqref{eq:infsup_div}, which comes with a cost. This condition requires to increase the polynomial degree of the discrete spaces beyond what is required for conformity. If lowest order possible is desired, it is thus necessary to modify the discrete problem to circumvent the need of satisfying condition \eqref{eq:infsup_div}. This gives rise to the so--called stabilized finite element methods. Several stabilized techniques are available in the literature. For an extensive review of different stabilized finite element methods we refer the reader to \cite[Part IV, Section 3]{MR2454024}, \cite[Chapter 7]{MR2490235} and \cite[Chapter 4]{MR3561143}.

We now describe the low--order stabilized schemes that we will consider in our work. To present them, we introduce the finite element spaces
\begin{equation}\label{eq:V_estab}
\mathbf{V}_{\text{stab}}(\mathscr{T})=\{\boldsymbol{v}_{\mathscr{T}}\in\mathbf{C}(\overline{\Omega}) : \boldsymbol{v}_{\mathscr{T}}|_T\in\mathbb{P}_1(T)^{d} \ \forall\ T\in\mathscr{T}\}\cap\mathbf{W}_0^{1,p\prime}(\Omega),
\end{equation}
and
\begin{equation}\label{eq:P_estab}
\mathcal{P}_{\ell,\text{stab}}(\mathscr{T})=\{q_{\mathscr{T}}\in \mathrm{L}^{p\prime}(\Omega)/\mathbb{R} : q_{\mathscr{T}}|_T\in\mathbb{P}_{\ell}(T) \ \forall \ T\in\mathscr{T}\},
\end{equation}
where $\ell\in\{0,1\}$. With these spaces at hand, we propose the following stabilized finite element method: Find $(\boldsymbol{u}_{\mathscr{T}},\pi_{\mathscr{T}})\in\mathbf{V}_{\text{stab}}(\mathscr{T})\times\mathcal{P}_{\ell,\text{stab}}(\mathscr{T})$ such  that
\begin{equation}\label{eq:stab_Stokes_system}
\begin{array}{rcll}
 a(\boldsymbol{u}_{\mathscr{T}},\boldsymbol{v}_{\mathscr{T}})+b(\boldsymbol{v}_{\mathscr{T}},\pi_{\mathscr{T}}) +s(\boldsymbol{u}_{\mathscr{T}},\boldsymbol{v}_{\mathscr{T}})& = &\langle\boldsymbol{f}\delta_{x_0},\boldsymbol{v}_{\mathscr{T}}\rangle &\forall\ \boldsymbol{v}_{\mathscr{T}}\in\mathbf{V}_{\text{stab}}(\mathscr{T}), \\
-b(\boldsymbol{u}_{\mathscr{T}},q_{\mathscr{T}})+m(\pi_{\mathscr{T}},q_{\mathscr{T}}) & = & 0 &\forall\ q_{\mathscr{T}}\in\mathcal{P}_{\ell,\text{stab}}(\mathscr{T}),
\end{array}\hspace{-0.4cm}
\end{equation}
where $s:\mathbf{V}_{\text{stab}}(\mathscr{T})\times \mathbf{V}_{\text{stab}}(\mathscr{T})\rightarrow\mathbb{R}$ and  $m:\mathcal{P}_{\ell,\text{stab}}(\mathscr{T}) \times \mathcal{P}_{\ell,\text{stab}}(\mathscr{T})\rightarrow\mathbb{R}$ are defined by
\begin{equation*}\label{eq:form_s}
\displaystyle s(\boldsymbol{u}_{\mathscr{T}},\boldsymbol{v}_{\mathscr{T}}):=\sum_{T\in\mathscr{T}}\tau_{\text{div}}\int_T\text{div}\boldsymbol{u}_{\mathscr{T}}\text{div}\boldsymbol{v}_{\mathscr{T}},
\end{equation*}
and
\begin{equation*}\label{eq:form_m}
m(\pi_{\mathscr{T}},q_{\mathscr{T}}):=\sum_{T\in\mathscr{T}}\tau_T\int_T\nabla\pi_{\mathscr{T}}\cdot\nabla q_{\mathscr{T}}+\sum_{S\in\mathscr{S}}\tau_S h_S\int_S\llbracket{\pi_{\mathscr{T}}}\rrbracket\llbracket{q_{\mathscr{T}}}\rrbracket,
\end{equation*}
respectively. Here, $\tau_{\text{div}}\geq 0$, $\tau_T\geq 0$, and $\tau_S>0$ correspond to stabilization parameters. The well--posedness of problem \eqref{eq:stab_Stokes_system} follows from \cite[Lemma 3.4, Section 3.1]{MR2454024}, when $\tau_{T}>0$, and \cite[Section 2.1]{MR1740398}, when $\tau_T = 0$ and $\ell = 0$, in conjunction with the equivalence of norms on discrete spaces.

We must immediately notice that, in view of the stabilization terms $s(\cdot,\cdot)$ and $m(\cdot,\cdot)$ in problem \eqref{eq:stab_Stokes_system}, the Galerkin orthogonality is no longer valid. Instead, we have the following relation for $(\boldsymbol{v}_{\mathscr{T}},q_{\mathscr{T}})\in\mathbf{V}_{\text{stab}}(\mathscr{T})\times\mathcal{P}_{\ell,\text{stab}}(\mathscr{T})$:
\begin{equation*}
\langle\mathcal{R},(\boldsymbol{v}_{\mathscr{T}},q_{\mathscr{T}})\rangle_{\mathcal{Y}\prime\times\mathcal{Y}}=s(\boldsymbol{u}_{\mathscr{T}},\boldsymbol{v}_{\mathscr{T}})+m(\pi_{\mathscr{T}},q_{\mathscr{T}}).
\end{equation*}
We recall that $\mathcal{R}$ denotes the residual and is defined in \eqref{eq:identity_residual_error}. For $(\boldsymbol{v}_{\mathscr{T}},q_{\mathscr{T}})\in\mathbf{V}_{\text{stab}}(\mathscr{T})\times\mathcal{P}_{\ell,\text{stab}}(\mathscr{T})$, the previous relation can be rewritten as
\begin{equation}\label{eq:rel_1}
0=\langle\boldsymbol{f}\delta_{x_0},\boldsymbol{v}_{\mathscr{T}}\rangle-c((\boldsymbol{u}_\mathscr{T},\pi_\mathscr{T}),(\boldsymbol{v}_\mathscr{T},q_\mathscr{T}))-s(\boldsymbol{u}_\mathscr{T},\boldsymbol{v}_\mathscr{T})-m(\pi_\mathscr{T},q_\mathscr{T}).
\end{equation}

We now introduce local error indicators and a posteriori error estimators. Let $T \in \mathscr{T}$. If $x_0 \in T$ is such that $x_0$ is not a vertex of $T$, we define the element error indicators
\begin{multline}
\eta_{\text{stab},p,T}:=\Big( 
h_T^p \|\Delta\boldsymbol{u}_{\mathscr{T}}-\nabla\pi_{\mathscr{T}}\|_{\mathbf{L}^p(T)}^p 
+
h_T \|\llbracket{(\nabla\boldsymbol{u}_{\mathscr{T}}-\mathbb{I}_{d}\pi_{\mathscr{T}})\cdot\boldsymbol{\nu}}\rrbracket \|_{\mathbf{L}^p(\partial T \setminus \partial \Omega)}^p 
\\
+
(1+\tau_{\text{div}}^p)\|\text{div}\boldsymbol{u}_{\mathscr{T}}\|_{\mathrm{L}^p(T)}^p+h_T^{d-p(d-1)} |\boldsymbol{f}|^p \Big)^{\frac{1}{p}}.
\label{eq:local_indicator_stab}
\end{multline}
If $x_0\in T$ is a vertex of $T$, then
\begin{multline}
\eta_{\text{stab},p,T}:=\Big( 
h_T^p \|\Delta\boldsymbol{u}_{\mathscr{T}}-\nabla\pi_{\mathscr{T}}\|_{\mathbf{L}^p(T)}^p 
+
h_T \|\llbracket{(\nabla\boldsymbol{u}_{\mathscr{T}}-\mathbb{I}_{d}\pi_{\mathscr{T}})\cdot\boldsymbol{\nu}}\rrbracket \|_{\mathbf{L}^p(\partial T \setminus \partial \Omega)}^p 
\\
+
(1+\tau_{\text{div}}^p)\|\text{div}\boldsymbol{u}_{\mathscr{T}}\|_{\mathrm{L}^p(T)}^p\Big)^{\frac{1}{p}}.
\label{eq:local_indicator_stab_not_vertex}
\end{multline}
If $x_0 \notin T$, then the indicator $\eta_{\text{stab},p,T}$ is defined as in \eqref{eq:local_indicator_stab_not_vertex}. Here, $(\boldsymbol{u}_{\mathscr{T}},\pi_{\mathscr{T}})$ denotes the solution to the stabilized discrete problem \eqref{eq:stab_Stokes_system} and $\mathbb{I}_{d}$ denotes the identity matrix in $\mathbb{R}^{d\times d}$. We recall that we consider our elements $T$ to be closed sets. 

The a posteriori error estimators are thus defined by
\begin{equation}\label{eq:error_estimator_global}
\eta_{\text{stab},p}:=\left( \sum_{T\in\mathscr{T}} \eta_{\text{stab},p,T}^p\right)^{\frac{1}{p}}.
\end{equation}

We now derive global reliability and local efficiency properties for the error estimators $\eta_{\text{stab},p}$.

\begin{theorem}[reliability and local efficiency]\label{thm:rel_eff_stab}
Let $p \in (2d/(d+1) - \varepsilon, d/(d-1))$. Let $(\boldsymbol{u},\pi)\in\mathbf{W}_0^{1,p}(\Omega)\times\mathrm{L}^p(\Omega)/\mathbb{R}$ be the solution to \eqref{eq:weak_Stokes_system} and $(\boldsymbol{u}_{\mathscr{T}},\pi_{\mathscr{T}})\in\mathbf{V}_{\text{stab}}(\mathscr{T})\times\mathcal{P}_{\ell,\text{stab}}(\mathscr{T})$ its stabilized finite element approximation obtained as the solution to \eqref{eq:stab_Stokes_system}. Then
\begin{equation}\label{eq:rel_stab}
\|(\mathbf{e}_{\boldsymbol{u}},e_\pi)\|_{\mathcal{X}}
\lesssim
\eta_{\textnormal{stab},p},
\end{equation} 
and 
\begin{equation}\label{eq:local_efficiency_stab}
\eta_{\textnormal{stab},p,T}^{p}
\lesssim
\|\nabla\mathbf{e}_{\boldsymbol{u}}\|_{\mathbf{L}^p(\mathcal{N}_T)}^p+\|e_\pi\|_{\mathrm{L}^p(\mathcal{N}_T)}^p.
\end{equation}
The hidden constants are independent of the continuous and discrete solutions, the size of the elements of the mesh $\mathscr{T}$, and $\#\mathscr{T}$.
\end{theorem}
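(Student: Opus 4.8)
The plan is to establish the reliability bound \eqref{eq:rel_stab} and the local efficiency bound \eqref{eq:local_efficiency_stab} separately, recycling as much as possible from the inf--sup stable analysis of Theorems \ref{thm:reliability_estimate} and \ref{thm:efficiency}. The key observation is that the residual--error identity \eqref{eq:identity_residual_error} and the integration--by--parts identity \eqref{eq:identity_2aux} both persist verbatim for the stabilized discrete solution $(\boldsymbol{u}_{\mathscr{T}},\pi_{\mathscr{T}})$ of \eqref{eq:stab_Stokes_system}, since they rely only on the definition \eqref{def:residual} of $\mathcal{R}$ and on elementwise integration by parts, neither of which sees the stabilization forms $s$ and $m$. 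Likewise, the equivalence $\beta\|(\mathbf{e}_{\boldsymbol{u}},e_\pi)\|_{\mathcal{X}}\leq\|\mathcal{R}\|_{\mathcal{Y}'}$ of Lemma \ref{lemma:eqErrorRes} remains valid, because it rests solely on the continuous inf--sup condition \eqref{eq:beta_infsup}. The one genuine novelty is that Galerkin orthogonality is no longer available and must be replaced by the perturbed relation \eqref{eq:rel_1}.

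For reliability, I would begin from $\beta\|(\mathbf{e}_{\boldsymbol{u}},e_\pi)\|_{\mathcal{X}}\leq\|\mathcal{R}\|_{\mathcal{Y}'}$ and, for arbitrary $(\boldsymbol{v},q)\in\mathcal{Y}$, split $\langle\mathcal{R},(\boldsymbol{v},q)\rangle_{\mathcal{Y}^\prime\times\mathcal{Y}}=\langle\mathcal{R},(\boldsymbol{v}-\mathcal{I}_{\mathscr{T}}\boldsymbol{v},q)\rangle_{\mathcal{Y}^\prime\times\mathcal{Y}}+\langle\mathcal{R},(\mathcal{I}_{\mathscr{T}}\boldsymbol{v},0)\rangle_{\mathcal{Y}^\prime\times\mathcal{Y}}$, where $\mathcal{I}_{\mathscr{T}}$ is now the $\mathbb{P}_1$ Lagrange interpolant onto $\mathbf{V}_{\text{stab}}(\mathscr{T})$. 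The first summand is treated exactly as in Theorem \ref{thm:reliability_estimate} through \eqref{eq:identity_2aux}, producing the residual, divergence, jump, and $|\boldsymbol{f}|$ contributions that constitute the inf--sup stable estimator. The second summand is where the argument departs: since $(\mathcal{I}_{\mathscr{T}}\boldsymbol{v},0)$ is a discrete pair, relation \eqref{eq:rel_1} with $q_{\mathscr{T}}=0$ (so that $m(\pi_{\mathscr{T}},0)=0$) gives $\langle\mathcal{R},(\mathcal{I}_{\mathscr{T}}\boldsymbol{v},0)\rangle_{\mathcal{Y}^\prime\times\mathcal{Y}}=s(\boldsymbol{u}_{\mathscr{T}},\mathcal{I}_{\mathscr{T}}\boldsymbol{v})$. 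Bounding this term by H\"older's inequality together with the $\mathbf{W}^{1,p\prime}$--stability of $\mathcal{I}_{\mathscr{T}}$ (valid because $p\prime>d$) yields $|s(\boldsymbol{u}_{\mathscr{T}},\mathcal{I}_{\mathscr{T}}\boldsymbol{v})|\lesssim\tau_{\text{div}}\sum_{T\in\mathscr{T}}\|\text{div}\,\boldsymbol{u}_{\mathscr{T}}\|_{\mathrm{L}^p(T)}\|\nabla\boldsymbol{v}\|_{\mathbf{L}^{p\prime}(T)}$, which is precisely the extra $\tau_{\text{div}}^p\|\text{div}\,\boldsymbol{u}_{\mathscr{T}}\|_{\mathrm{L}^p(T)}^p$ accounted for by the weight $(1+\tau_{\text{div}}^p)$ in \eqref{eq:local_indicator_stab}. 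Collecting terms and taking the supremum over $(\boldsymbol{v},q)$ then delivers $\|\mathcal{R}\|_{\mathcal{Y}'}\lesssim\eta_{\text{stab},p}$, hence \eqref{eq:rel_stab}. I note that no pressure--stabilization term $m$ enters, consistently with the absence of pressure--jump contributions in $\eta_{\text{stab},p}$.

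For efficiency, the plan is to reproduce the five steps of Theorem \ref{thm:efficiency} essentially unchanged. Each bubble--function test pair $(\boldsymbol\Phi_T,0)$, $(\boldsymbol\Phi_S,0)$, and $(\mathbf{F},0)$ is non--discrete, so the pertinent tool is the exact identity \eqref{eq:identity_residual_error}, which, as noted, holds for the stabilized solution and never involves $s$ or $m$. Consequently the bounds \eqref{eq:residual_estimate}, \eqref{eq:divergence_estimate}, \eqref{eq:jump_estimate}, and \eqref{eq:vector_estimate} for the residual, divergence, jump, and $|\boldsymbol{f}|$ contributions carry over verbatim. The only additional feature is the weight $(1+\tau_{\text{div}}^p)$ on the divergence term, which is harmless: the incompressibility condition gives $\|\text{div}\,\boldsymbol{u}_{\mathscr{T}}\|_{\mathrm{L}^p(T)}=\|\text{div}\,\mathbf{e}_{\boldsymbol{u}}\|_{\mathrm{L}^p(T)}\lesssim\|\nabla\mathbf{e}_{\boldsymbol{u}}\|_{\mathbf{L}^p(T)}$, so the factor $(1+\tau_{\text{div}}^p)$ is absorbed into the hidden constant (permitted, since the constant may depend on the fixed stabilization parameters). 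Summing the local contributions over $\mathcal{N}_T$ produces \eqref{eq:local_efficiency_stab}.

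The main obstacle, and essentially the only step requiring care, is the reliability argument: the loss of Galerkin orthogonality forces the use of \eqref{eq:rel_1} and the clean estimation of the stabilization residual $s(\boldsymbol{u}_{\mathscr{T}},\mathcal{I}_{\mathscr{T}}\boldsymbol{v})$ it generates. The choice $q_{\mathscr{T}}=0$ is what keeps the pressure stabilization $m$ out of the estimate and explains why $\eta_{\text{stab},p}$ needs only the enlarged divergence weight and no further terms.
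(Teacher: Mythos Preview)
Your proposal is correct and follows essentially the same approach as the paper: for reliability you split the residual via $(\boldsymbol{v},q)\mapsto(\boldsymbol{v}-\mathcal{I}_{\mathscr{T}}\boldsymbol{v},q)+(\mathcal{I}_{\mathscr{T}}\boldsymbol{v},0)$, invoke \eqref{eq:rel_1} with $q_{\mathscr{T}}=0$ on the discrete part to produce the single extra term $s(\boldsymbol{u}_{\mathscr{T}},\mathcal{I}_{\mathscr{T}}\boldsymbol{v})$, and bound it by H\"older plus stability of the Lagrange interpolant; for efficiency you observe that the bubble--function arguments of Theorem \ref{thm:efficiency} rest only on \eqref{eq:identity_residual_error} and \eqref{eq:identity_2aux}, which are scheme--independent, so the bounds carry over and the $(1+\tau_{\text{div}}^p)$ weight is absorbed via \eqref{eq:divergence_estimate}. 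This is precisely the paper's argument.
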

\begin{proof}
We first derive the reliability estimate \eqref{eq:rel_stab}. To accomplish this task, we first observe, from \eqref{eq:rel_1}, that
\begin{multline*}\label{eq:identity_stab}
\langle\mathcal{R},(\boldsymbol{v},q)\rangle_{\mathcal{Y}^\prime\times\mathcal{Y}}=\langle\boldsymbol{f}\delta_{x_0},\boldsymbol{v}-\mathcal{I}_{\mathscr{T}}\boldsymbol{v}\rangle-\sum_{T\in\mathscr{T}}\int_T(-\Delta \boldsymbol{u}_\mathscr{T}+\nabla \pi_\mathscr{T})\cdot(\boldsymbol{v}-\mathcal{I}_{\mathscr{T}}\boldsymbol{v}) \\-\sum_{T\in\mathscr{T}}\int_T \text{div}\boldsymbol{u}_\mathscr{T} q 
+\sum_{S \in \mathscr{S}}\int_S \llbracket{(\nabla \boldsymbol{u}_\mathscr{T}-\pi_\mathscr{T}\mathbb{I}_{d})\cdot\boldsymbol\nu}\rrbracket\cdot (\boldsymbol{v}-\mathcal{I}_{\mathscr{T}}\boldsymbol{v})+s(\boldsymbol{u}_\mathscr{T},\mathcal{I}_{\mathscr{T}}\boldsymbol{v}),
\end{multline*}
where $\mathcal{I}_{\mathscr{T}}$ denote the Lagrange interpolation operator of Section \ref{sec:interpolation}. 

A simple inspection of the right--hand side of the previous expression reveals that, with the exception $s(\boldsymbol{u}_\mathscr{T},\mathcal{I}_{\mathscr{T}}\boldsymbol{v})$, all the involved terms have been estimated in the proof of Theorem \ref{global_reliability}. To control $s(\boldsymbol{u}_\mathscr{T},\mathcal{I}_{\mathscr{T}}\boldsymbol{v})$ we proceed as follows:
\begin{align*}
|s(\boldsymbol{u}_\mathscr{T},\mathcal{I}_{\mathscr{T}}\boldsymbol{v})|&\leq\sum_{T\in\mathscr{T}}\int_{T}\tau_{\text{div}}|\text{div}\boldsymbol{u}_{\mathscr{T}}\text{div}\mathcal{I}_{\mathscr{T}}\boldsymbol{v}|\\
                                  &\leq\sum_{T\in\mathscr{T}}\tau_{\text{div}}\|\text{div}\boldsymbol{u}_{\mathscr{T}}\|_{\mathrm{L}^p(T)}\|\text{div}\mathcal{I}_{\mathscr{T}}\boldsymbol{v}\|_{\mathrm{L}^{p\prime}(T)}.
\end{align*}
We thus invoke the stability of the Lagrange interpolation operator \cite[Theorem 1.103]{MR2050138} to arrive at
\begin{equation*}
\displaystyle |s(\boldsymbol{u}_\mathscr{T},\mathcal{I}_{\mathscr{T}}\boldsymbol{v})| \lesssim \| \nabla\boldsymbol{v}\|_{\mathbf{L}^{p\prime}(\Omega)}\sum_{T\in\mathscr{T}}\left(\tau^p_{\text{div}} \|\text{div}\boldsymbol{u}_\mathscr{T}\|_{\mathrm{L}^p(T)}^p\right)^{\frac{1}{p}}.
\end{equation*}
This estimate combined with the estimates obtained in the proof of Theorem \ref{thm:reliability_estimate} yield \eqref{eq:rel_stab}.

The local efficiency \eqref{eq:local_efficiency_stab} is a direct consequence of the results of Theorem \ref{thm:efficiency} since the lower bound does not contain any consistency terms.\qed
\end{proof}

\section{Numerical Experiments}\label{sec:numericos}
We conduct a series of numerical examples that illustrate the performance of the devised a posteriori error estimators. To explore the performance of the estimators $\eta_p$, defined in \eqref{eq:error_estimator}, we consider the discrete system \eqref{eq:disc_Stokes_system} with the discrete spaces \eqref{eq:V_TH} and \eqref{eq:P_TH}. This setting will be referred to as \emph{Taylor--Hood approximation}. The performance of the error estimators $\eta_{\text{stab},p}$, defined in \eqref{eq:error_estimator_global}, is explored by solving the stabilized discrete system \eqref{eq:stab_Stokes_system} with the following finite element setting: the discrete spaces are \eqref{eq:V_estab} and \eqref{eq:P_estab}, with $\ell=0$, and the stabilization parameters are $\tau_{\text{div}}=0$, $\tau_T=0$, and $\tau_S=1/12$. This setting will be referred to as \emph{low--order stabilized approximation}.


\subsection{Implementation}\label{sec:implementation}

All the experiments have been carried out with the help of a code that we implemented using \texttt{C++}. All matrices have been assembled exactly. The right hand sides of the assembled systems, the local indicators, and the approximation errors, are computed by a quadrature formula which is exact for polynomials of degree $19$ for two dimensional domains and degree $14$ for three dimensional domains. The linear systems were solved using the multifrontal massively parallel sparse direct solver (MUMPS) \cite{MUMPS1,MUMPS2}. 

For a given partition $\mathscr{T}$ we seek $(\boldsymbol{u}_{\mathscr{T}},\pi_{\mathscr{T}})$ that solves the discrete system \eqref{eq:disc_Stokes_system} or the stabilized discrete scheme \eqref{eq:stab_Stokes_system}. We thus compute the local error indicators $\eta_{p,T}$ or $\eta_{\text{stab},p,T}$ to drive the adaptive procedure described in \textbf{Algorithm} \ref{Algorithm} and compute the global error estimators $\eta_p$ or $\eta_{\text{stab},p}$ in order to assess the accuracy of the approximation. A sequence of adaptively refined meshes is thus generated from the initial meshes shown in Figure \ref{fig:initial_meshes}. For \emph{Taylor--Hood approximation}, the total number of degrees of freedom is $\mathsf{Ndof} := \dim(\mathbf{V}(\mathscr{T}))+\dim(\mathcal{P}(\mathscr{T}))$,
where $(\mathbf{V}(\mathscr{T}),\mathcal{P}(\mathscr{T}))$ is given by \eqref{eq:V_TH}--\eqref{eq:P_TH}. For \emph{low--order stabilized approximation}, $\mathsf{Ndof} := \dim(\mathbf{V}_{\text{stab}}(\mathscr{T}))+\dim(\mathcal{P}_{\ell,\text{stab}}(\mathscr{T}))$, where $(\mathbf{V}_{\text{stab}}(\mathscr{T}),\mathcal{P}_{\ell,\text{stab}}(\mathscr{T}))$ is given by \eqref{eq:V_estab}--\eqref{eq:P_estab} with $\ell = 0$. The error is measured in the norm $\|(\mathbf{e}_{\boldsymbol{u}},e_\pi)\|_\mathcal{X}$.

\begin{figure}[!ht]
\centering
\begin{minipage}{0.3\textwidth}\centering
\includegraphics[trim={0 0 0 0},clip,width=3.5cm,height=3.0cm,scale=0.2]{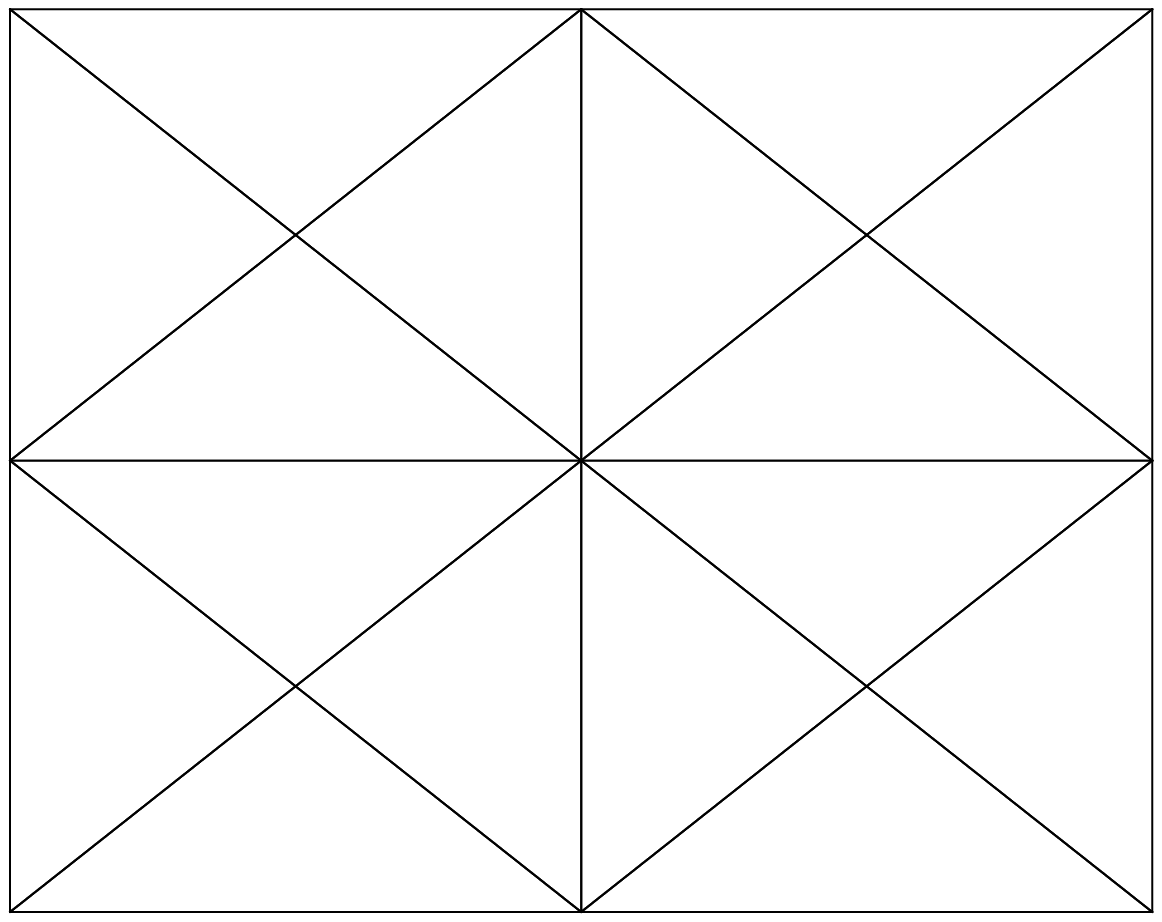}
\end{minipage}
\begin{minipage}{0.32\textwidth}\centering
\includegraphics[trim={0 0 0 0},clip,width=3.6cm,height=3.0cm,scale=0.2]{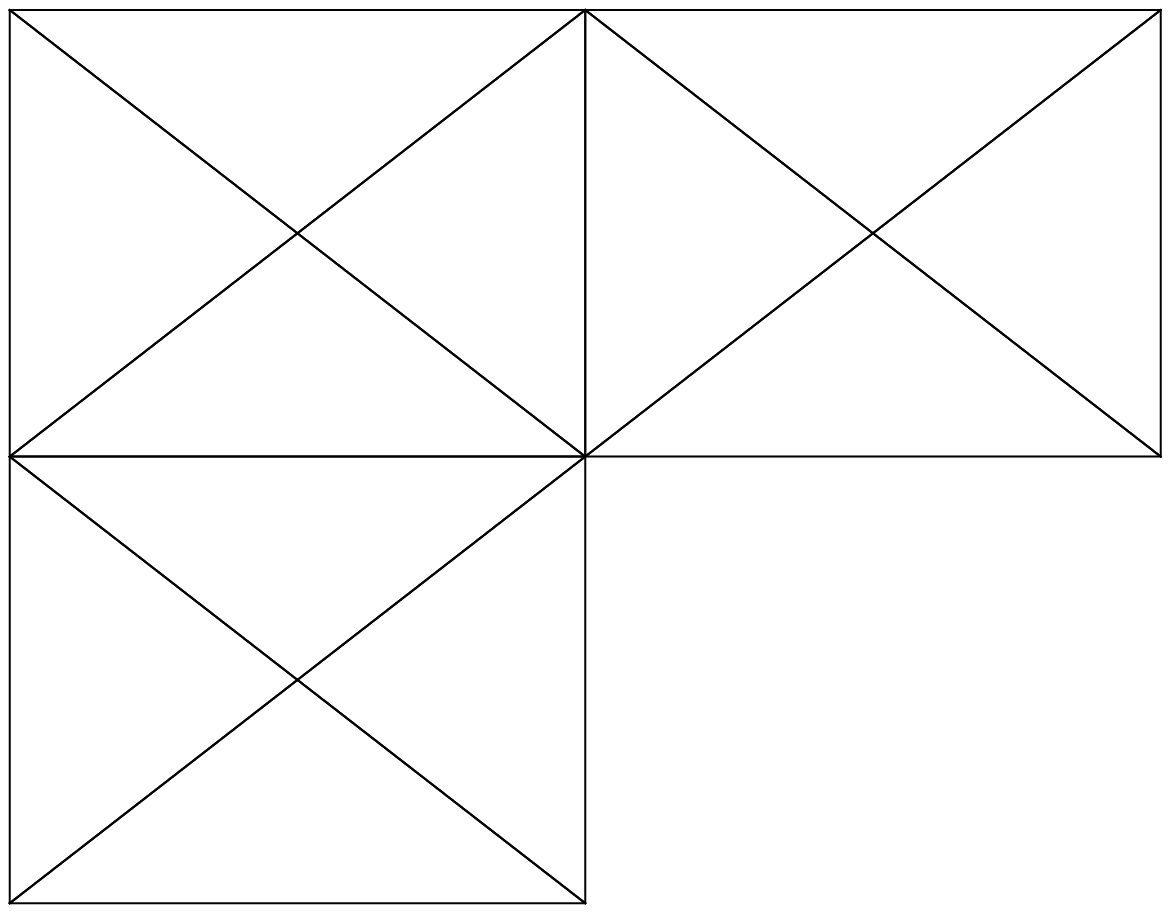}
\end{minipage}
\begin{minipage}{0.3\textwidth}\centering
\includegraphics[trim={0 0 0 0},clip,width=3.2cm,height=3.2cm,scale=0.2]{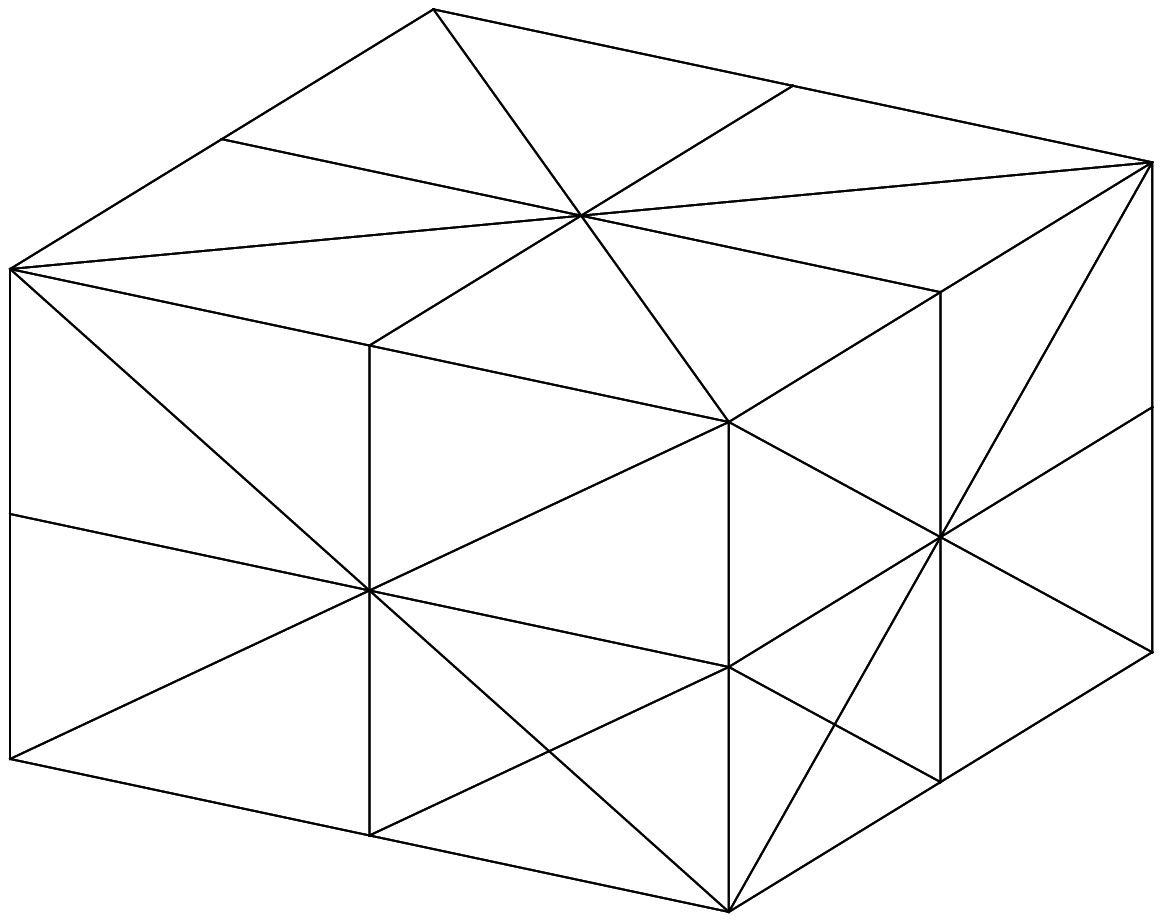}
\end{minipage}
\caption{The initial meshes used in the adaptive \textbf{Algorithm} \ref{Algorithm} when the domain $\Omega$ is a square (Example 1), a two dimensional L--shape (Examples 2 and 4), and a cube (Examples 3 and 5).}
\label{fig:initial_meshes}
\end{figure}

\begin{algorithm}[ht]
\caption{\textbf{Adaptive algorithm.}}
\label{Algorithm}
\SetKwInput{set}{Set}
\SetKwInput{ase}{Problem solution}
\SetKwInput{al}{Adaptive loop}
\SetKwInput{Input}{Input}
\Input{Initial mesh $\mathscr{T}_{0}$, subset $\mathcal{D}$, vectors $\{\boldsymbol{f}_t\}_{t\in\mathcal{D}}$, and stabilization parameters.}
\set{$i=0$.}
Solve the discrete system \eqref{eq:disc_Stokes_system} (\eqref{eq:stab_Stokes_system});
\\
For each $T\in\mathscr{T}_i$ compute the local error indicator $\eta_{p,T}$ ($\eta_{\text{stab},p,T}$) given as in \eqref{eq:local_indicator}--\eqref{eq:local_indicator2} (\eqref{eq:local_indicator_stab}--\eqref{eq:local_indicator_stab_not_vertex});
\\
Mark an element $T$ for refinement if 
\[
\eta_{p,T}^p > \displaystyle\frac{1}{2}\max_{T'\in \mathscr{T}}\eta_{p,T'}^p \quad
\left(\eta^p_{\text{stab},p,T}> \displaystyle\frac{1}{2}\max_{T'\in \mathscr{T}}\eta^p_{\text{stab},p,T'}\right);
\]
\\
From step $\boldsymbol{3}$, construct a new mesh, using a longest edge bisection algorithm. Set $i \leftarrow i + 1$, and go to step $\boldsymbol{1}$.
\\
\end{algorithm}
\normalsize

In the experiments that we perform we go beyond the presented theory and include a series of Dirac delta sources on the right--hand side of the momentum equation. To make matters precise, we will replace the momentum equation in \eqref{def:Stokes_singular_rhs} by
\begin{equation*}\label{eq:equation_combination_deltas}
-\Delta \boldsymbol{u} + \nabla \pi = \sum_{t \in \mathcal{D}}\boldsymbol{f}_{t} \delta_{t},
\end{equation*}
where $\mathcal{D}$ corresponds to a finite ordered subset of $\Omega$ with cardinality $\# \mathcal{D}$ and $\{\boldsymbol{f}_{t}\}_{t \in \mathcal{D}} \subset \mathbb{R}^{d}$. We thus propose the following a posteriori error estimator when Taylor--Hood
approximation is considered:
\begin{equation*}\label{eq:error_estimator_zeta}
\zeta_{p} := \left(\sum_{T \in \mathscr{T} }\zeta_{p,T}^p\right)^{\frac{1}{p}}.
\end{equation*}
For each $T\in\mathscr{T}$, the local error indicators are given by: If $t \in \mathcal{D} \cap T$ and \ref{i} or \ref{ii} hold, then
\begin{multline}\label{eq:local_indicator_zeta}
\zeta_{p,T}:=\Big(h_T^p \|\Delta\boldsymbol{u}_{\mathscr{T}}-\nabla\pi_{\mathscr{T}}\|_{\mathbf{L}^p(T)}^p + h_T\|\llbracket{(\nabla\boldsymbol{u}_{\mathscr{T}}-\mathbb{I}_{d}\pi_{\mathscr{T}})\cdot\boldsymbol{\nu}}\rrbracket \|_{\mathbf{L}^p(\partial T \setminus \partial \Omega)}^p  \\
+\|\text{div}\boldsymbol{u}_{\mathscr{T}}\|_{\mathrm{L}^p(T)}^p + \sum_{t \in \mathcal{D} \cap T}h_T^{d-p(d-1)} |\boldsymbol{f}_{t}|^p\Big)^{\frac{1}{p}}.
\end{multline}
If $t \in \mathcal{D} \cap T$ and \ref{i} or \ref{ii} do not hold, then
\begin{multline}\label{eq:local_indicator_zeta_2}
\zeta_{p,T}:=\Big(h_T^p \|\Delta\boldsymbol{u}_{\mathscr{T}}-\nabla\pi_{\mathscr{T}}\|_{\mathbf{L}^p(T)}^p + h_T\|\llbracket{(\nabla\boldsymbol{u}_{\mathscr{T}}-\mathbb{I}_{d}\pi_{\mathscr{T}})\cdot\boldsymbol{\nu}}\rrbracket \|_{\mathbf{L}^p(\partial T \setminus \partial \Omega)}^p \\
+\|\text{div}\boldsymbol{u}_{\mathscr{T}}\|_{\mathrm{L}^p(T)}^p\Big)^{\frac{1}{p}}.
\end{multline}
If $T \cap \mathcal{D} = \emptyset$, then the indicator is defined as in \eqref{eq:local_indicator_zeta_2}. Notice that, when $\#\mathcal{D}=1$, the total error estimator $\zeta_p$ coincides with $\eta_p$, which is defined in \eqref{eq:error_estimator}. 

Similarly, when the \emph{low--order stabilized approximation} scheme is considered, we propose the error estimator
\begin{equation*}\label{eq:estimator_stab_zeta}
\zeta_{\text{stab},p}:=\left( \sum_{T\in\mathscr{T}} \zeta_{\text{stab},p,T}^p\right)^{\frac{1}{p}}.
\end{equation*}
For each $T\in \mathscr{T}$, the local error indicators are given by: If $t \in \mathcal{D}\cap T$ is such that $t$ is not a vertex of $T$, then
\begin{multline}
\zeta_{\text{stab},p,T}:=\Big( 
h_T^p \|\Delta\boldsymbol{u}_{\mathscr{T}}-\nabla\pi_{\mathscr{T}}\|_{\mathbf{L}^p(T)}^p 
+
h_T \|\llbracket{(\nabla\boldsymbol{u}_{\mathscr{T}}-\mathbb{I}_{d}\pi_{\mathscr{T}})\cdot\boldsymbol{\nu}}\rrbracket \|_{\mathbf{L}^p(\partial T \setminus \partial \Omega)}^p 
\\
+
(1+\tau_{\text{div}}^p)\|\text{div}\boldsymbol{u}_{\mathscr{T}}\|_{\mathrm{L}^p(T)}^p+\sum_{t \in \mathcal{D} \cap T}h_T^{d-p(d-1)} |\boldsymbol{f}_t|^p \Big)^{\frac{1}{p}}.
\label{eq:local_indicator_stab_zeta}
\end{multline}
If $t \in \mathcal{D}\cap T$ is a vertex of $T$, then
\begin{multline}
\zeta_{\text{stab},p,T}:=\Big( 
h_T^p \|\Delta\boldsymbol{u}_{\mathscr{T}}-\nabla\pi_{\mathscr{T}}\|_{\mathbf{L}^p(T)}^p 
+
h_T \|\llbracket{(\nabla\boldsymbol{u}_{\mathscr{T}}-\mathbb{I}_{d}\pi_{\mathscr{T}})\cdot\boldsymbol{\nu}}\rrbracket \|_{\mathbf{L}^p(\partial T \setminus \partial \Omega)}^p 
\\
+
(1+\tau_{\text{div}}^p)\|\text{div}\boldsymbol{u}_{\mathscr{T}}\|_{\mathrm{L}^p(T)}^p\Big)^{\frac{1}{p}}.
\label{eq:local_indicator_stab_not_vertex_zeta}
\end{multline}
If $T \cap \mathcal{D} = \emptyset$, then the indicator is defined as in \eqref{eq:local_indicator_stab_not_vertex_zeta}.

In the following numerical examples, when it corresponds, we replace $\eta_{p,T}$ by $\zeta_{p,T}$ and $\eta_{\text{stab},T}$ by $\zeta_{\text{stab},T}$ in \textbf{Algorithm} \ref{Algorithm}.

We consider problems with homogeneous boundary conditions whose exact solutions are not known. We also consider problems with inhomogeneous Dirichlet boundary conditions whose exact solutions are known. Notice that this violates the assumption of homogeneous Dirichlet boundary conditions which is needed for the analysis that we have performed. In this case, we write the solution $(\boldsymbol{u},\pi)$ in terms of fundamental solutions of the Stokes equations \cite[Section IV.2]{MR2808162}:
\begin{equation}\label{def:fund_sol}
\boldsymbol{u}(x):= 
\sum_{t\in\mathcal{D}}
\sum_{i=1}^{d}
\widetilde{\mathbf{T}}_{t}(x)\cdot \mathbf{e}_{i} ,\qquad 
\pi(x):= 
\sum_{t\in\mathcal{D}}
\sum_{i=1}^{d}
\mathbf{T}_{t}(x)\cdot \mathbf{e}_{i},
\end{equation}
where, if $\mathbf{r}_{t} = x - t$ and $\mathbb{I}_d$ is the identity matrix in $\mathbb{R}^{d\times d}$, then 
\begin{align*}
\begin{array}{c}\displaystyle
\widetilde{\mathbf{T}}_{t}(x)
=
\left\{\begin{array}{ll}
-\dfrac{1}{4\pi}\bigg(\log|\mathbf{r}_{t}|\mathbb{I}^{}_2
-\dfrac{\mathbf{r}_{t}\mathbf{r}_{t}^{\intercal}}{|\mathbf{r}_{t}|^2}
\bigg), & \text{if }  d = 2, \\
\dfrac{1}{8\pi}\bigg(\dfrac{1}{|\mathbf{r}_{t}|}\mathbb{I}^{}_3
+\dfrac{\mathbf{r}_{t}\mathbf{r}_{t}^{\intercal}}{|\mathbf{r}_{t}|^3}
\bigg), & \text{if } d = 3;
\end{array}
\right. \displaystyle
\mathbf{T}_{t}(x)=\left\{\begin{array}{ll}
-\dfrac{\mathbf{r}_{t}}{2\pi|\mathbf{r}_{t}|^{2}}, & \text{if } d = 2, \\
-\dfrac{\mathbf{r}_{t}}{4\pi|\mathbf{r}_{t}|^{3}}, & \text{if } d = 3;
\end{array}
\right.
\end{array}
\end{align*} 
$\{ \mathbf{e}_{i} \}_{i=1}^d$ denotes the canonical basis of $\mathbb{R}^{d}$.

\subsection{Taylor--Hood approximation}\label{sec:T-H_approx}

We perform two and three dimensional examples on convex and nonconvex domains and with different number of source points. 
\\~\\
\textbf{Example 1 (Convex domain).} We consider $\Omega=(0,1)^2$ and 
\[
 \mathcal{D}=\{(0.25,0.25),(0.25,0.75),(0.75,0.25),(0.75,0.75).
\]
The solution $(\boldsymbol{u},\pi)$ is given as in \eqref{def:fund_sol}.

In this example we investigate the effect of varying the integrability index $p$. Notice that, since problem \eqref{eq:weak_Stokes_system} is well--posed for $p \in (4/3-\varepsilon,2)$, the solution $(\boldsymbol{u},\pi)$ belongs to $\mathbf{W}^{1,p}(\Omega) \times \mathrm{L}^p(\Omega)/\mathbb{R}$ for every $p<2$. In the particular setting of Example 1, we will thus consider $p \in \{1.2,1.4,1.6,1.8\}$. Finally, for each integrability index $p$, we compute the effectivity index $\mathscr{I}_{p} := \zeta_p / \|(\mathbf{e}_{\boldsymbol{u}},e_\pi)\|_\mathcal{X}$.

In Figures \ref{fig:ex-1.1} and \ref{fig:ex-1.2} we present the results obtained for Example 1. In particular, Figure \ref{fig:ex-1.1} presents, for different values of the integrability index $p \in \{1.2,1.4,1.6,1.8\}$, experimental rates of convergence for $\zeta_p$ and $\|(\mathbf{e}_{\boldsymbol{u}},e_\pi)\|_\mathcal{X}$, effectivity indices $\mathscr{I}_p$, and adaptively refined meshes. We observe, in subfigures (A.1)--(D.1), optimal experimental rates of convergence for the error estimators $\zeta_p$ and the total error $\|(\mathbf{e}_{\boldsymbol{u}},e_\pi)\|_\mathcal{X}$. In subfigures (A.3)--(D.3), we appreciate the effect of varying the integrability index $p$ on the adaptively refined meshes. In particular, we observe that the adaptive refinement is mostly concentrated on the points $t\in\mathcal{D}$ where the Dirac measures are supported. We also observe that the effectivity indices $\mathscr{I}_p$ decrease as the index $p$ increases; see subfigures (A.2)--(D.2). Finally, all the effectivity indices are stabilized around values between 6 and 13. This shows the accuracy of the proposed a posteriori error estimators $\zeta_p$ when used in the adaptive loop described in Algorithm \ref{Algorithm}. 
In Figure \ref{fig:ex-1.2} we present experimental rates of convergence for $\|(\mathbf{e}_{\boldsymbol{u}},e_\pi)\|_\mathcal{X}$ and $\zeta_p$ for uniform and adaptive refinement when $p = 1.05$. From subfigures (A)--(B) we observe that the devised adaptive loop outperforms uniform refinement. Moreover, adaptive refinement exhibits an optimal experimental rate of convergence. 

~\\
\textbf{Example 2 (L-shaped domain).} We let $\Omega=(0,1)^2 \setminus [0.5,1)\times (0,0.5]$, $p \in \{1.05,1.2,1.4,1.6,1.8\}$, $
\mathcal{D}=\{(0.25,0.25),(0.25,0.75),(0.75,0.75)\}$, and
\[
 \boldsymbol{f}_{(0.25,0.25)}=(4,4), \quad \boldsymbol{f}_{(0.25,0.75)}=(6,6), \quad \boldsymbol{f}_{(0.75,0.75)}=(-4,-4).
\]

In Figure \ref{fig:ex_2} we report the results obtained for Example 2. We present the finite element approximations of $|{\boldsymbol{u}}_\mathscr{T}|$ and $\pi_\mathscr{T}$, experimental rates of convergence for the error estimators $\zeta_p$, and adaptively refined meshes. We observe, in subfigure (A), that, for all the values of $p \in \{1.05,1.2,1.4,1.6,1.8\}$, optimal experimental rates of convergence for the total error estimators $\zeta_p$ are attained. We also observe, in the adaptively refined meshes (D)--(F), that the refinement is being concentrated around the re--entrant corner and the source points ($p=1.4$). 

\begin{figure}[!ht]
\centering
\psfrag{error}{{\huge $\|(\mathbf{e}_{\boldsymbol{u}},e_{\pi})\|_{\mathcal{X}}$}}
\psfrag{estimador}{{\Huge $\zeta_p$}}
\psfrag{O(Ndofs-1)}{\huge$\mathsf{Ndof}^{-1}$}
\psfrag{O(Ndofs-11)}{\huge$\mathsf{Ndof}^{-1}$}
\psfrag{O(Ndofs-111)}{\huge$\mathsf{Ndof}^{-1}$}
\psfrag{Ndofs}{{\huge $\mathsf{Ndof}$}}
\psfrag{IEE}{{\huge $\mathscr{I}_{p}$}}
\begin{minipage}[c]{0.35\textwidth}
\centering
\psfrag{Example 1 - p12}{\hspace{-0.7cm}\huge{$\|(\mathbf{e}_{\boldsymbol{u}},e_{\pi})\|_{\mathcal{X}}$ and $\zeta_p$ for $p = 1.2$}}
\includegraphics[trim={0 0 0 0},clip,width=3.7cm,height=3.5cm,scale=0.65]{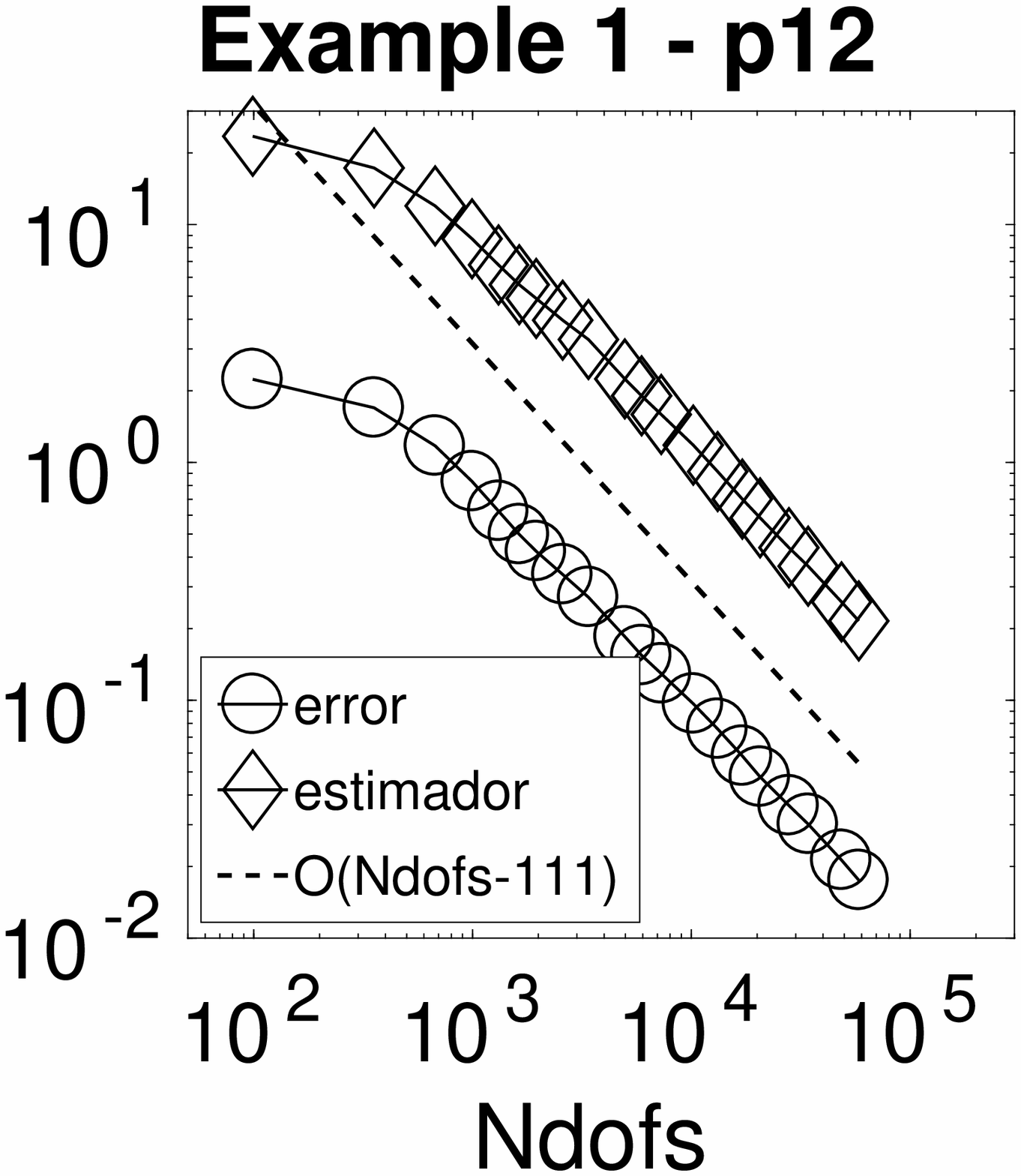} \\
\tiny{(A.1)}
\\~\\
\psfrag{Example 1 - p14}{\hspace{-0.7cm}\huge{$\|(\mathbf{e}_{\boldsymbol{u}},e_{\pi})\|_{\mathcal{X}}$ and $\zeta_p$ for $p = 1.4$}}
\includegraphics[trim={0 0 0 0},clip,width=3.7cm,height=3.5cm,scale=0.65]{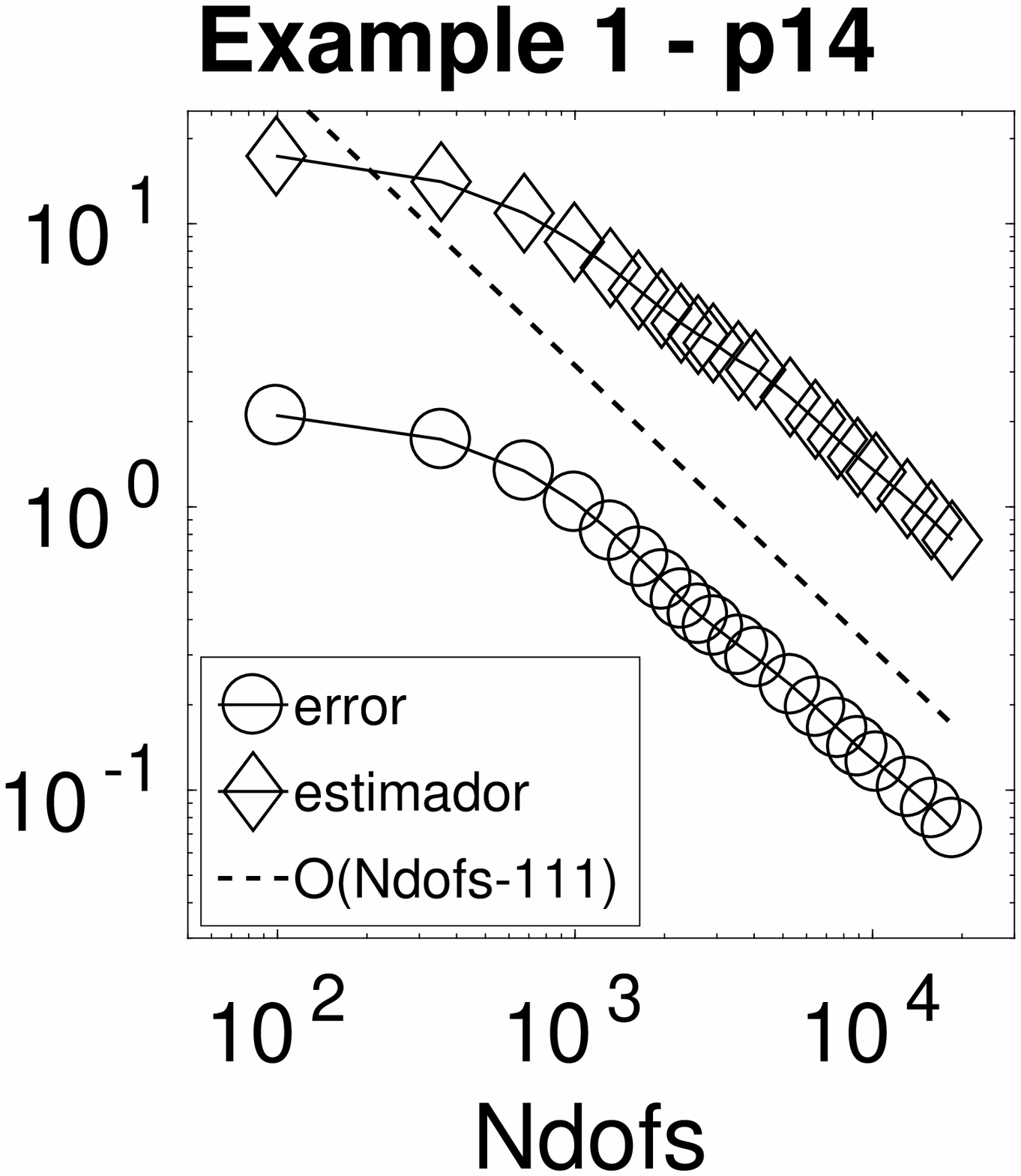} \\
\tiny{(B.1)}
\\~\\
\psfrag{Example 1 - p16}{\hspace{-0.7cm}\huge{$\|(\mathbf{e}_{\boldsymbol{u}},e_{\pi})\|_{\mathcal{X}}$ and $\zeta_p$ for $p = 1.6$}}
\includegraphics[trim={0 0 0 0},clip,width=3.7cm,height=3.5cm,scale=0.65]{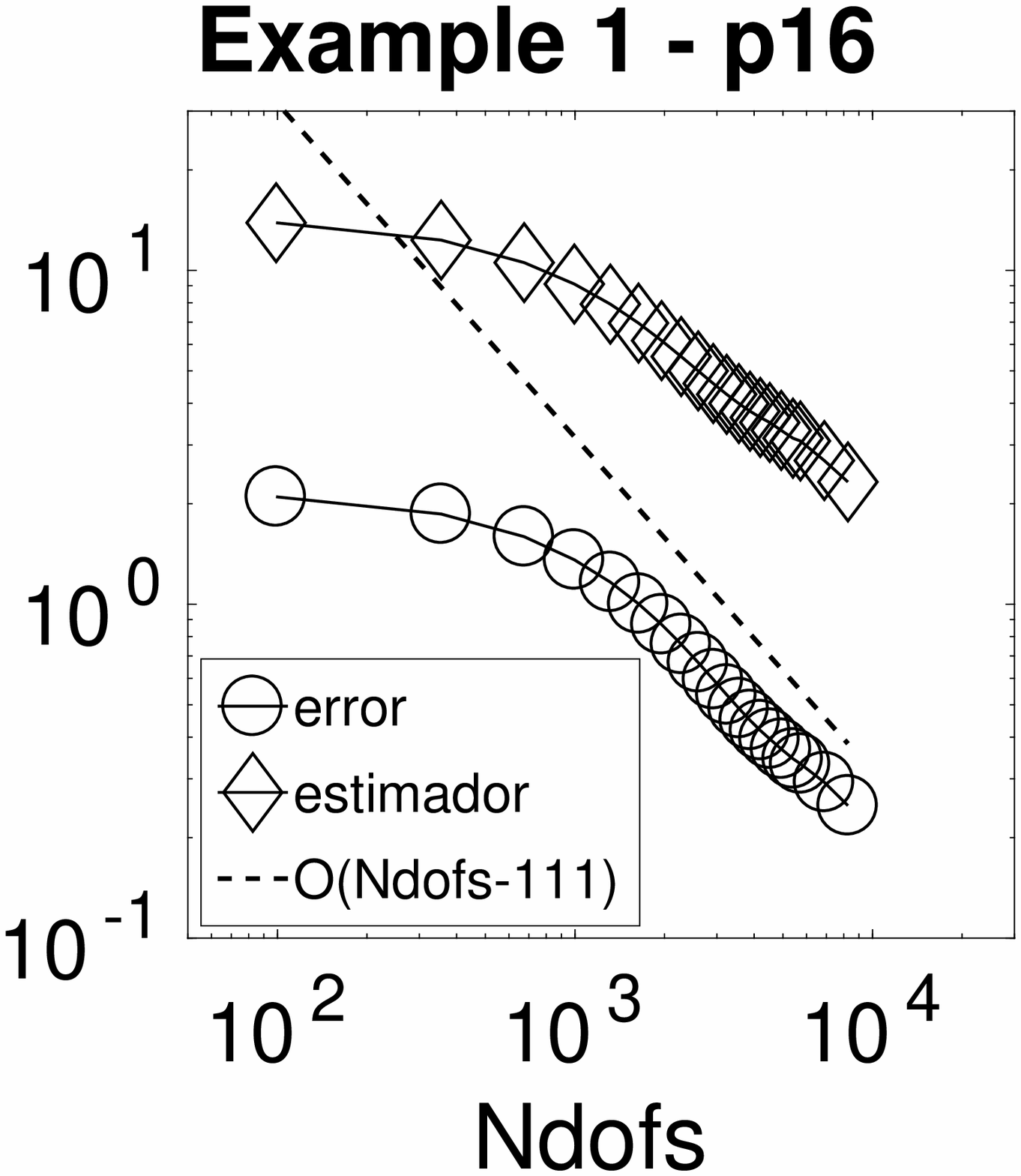} \\
\tiny{(C.1)}
\\~\\
\psfrag{Example 1 - p18}{\hspace{-0.7cm}\huge{$\|(\mathbf{e}_{\boldsymbol{u}},e_{\pi})\|_{\mathcal{X}}$ and $\zeta_p$ for $p = 1.8$}}
\includegraphics[trim={0 0 0 0},clip,width=3.5cm,height=3.5cm,scale=0.65]{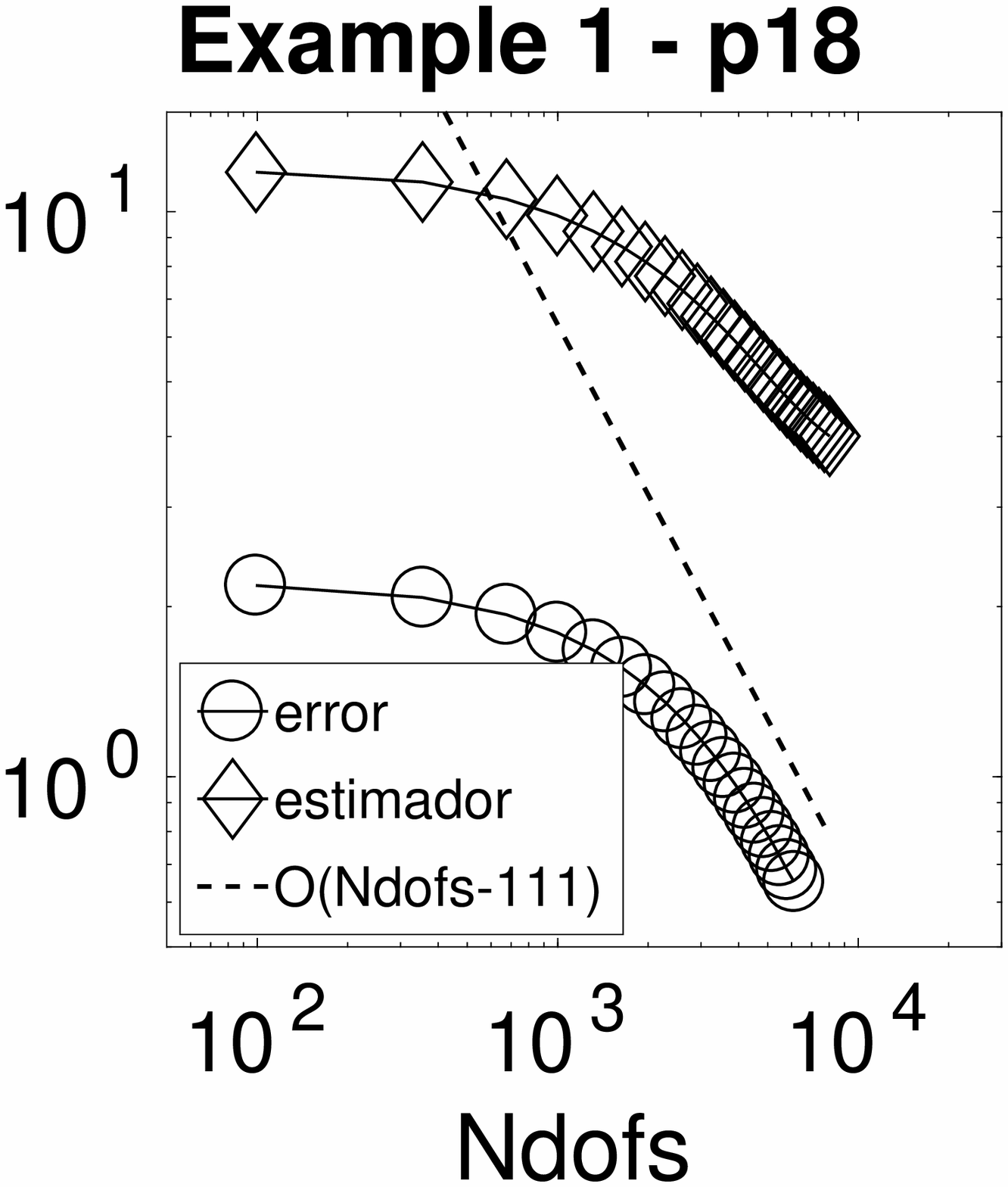} \\
\tiny{(D.1)}
\end{minipage}
\begin{minipage}[c]{0.35\textwidth}
\centering
\psfrag{indice ef}{\hspace{-1.8cm}\huge{Effectivity index for $p = 1.2$}}
\includegraphics[trim={0 0 0 0},clip,width=4.0cm,height=3.5cm,scale=0.65]{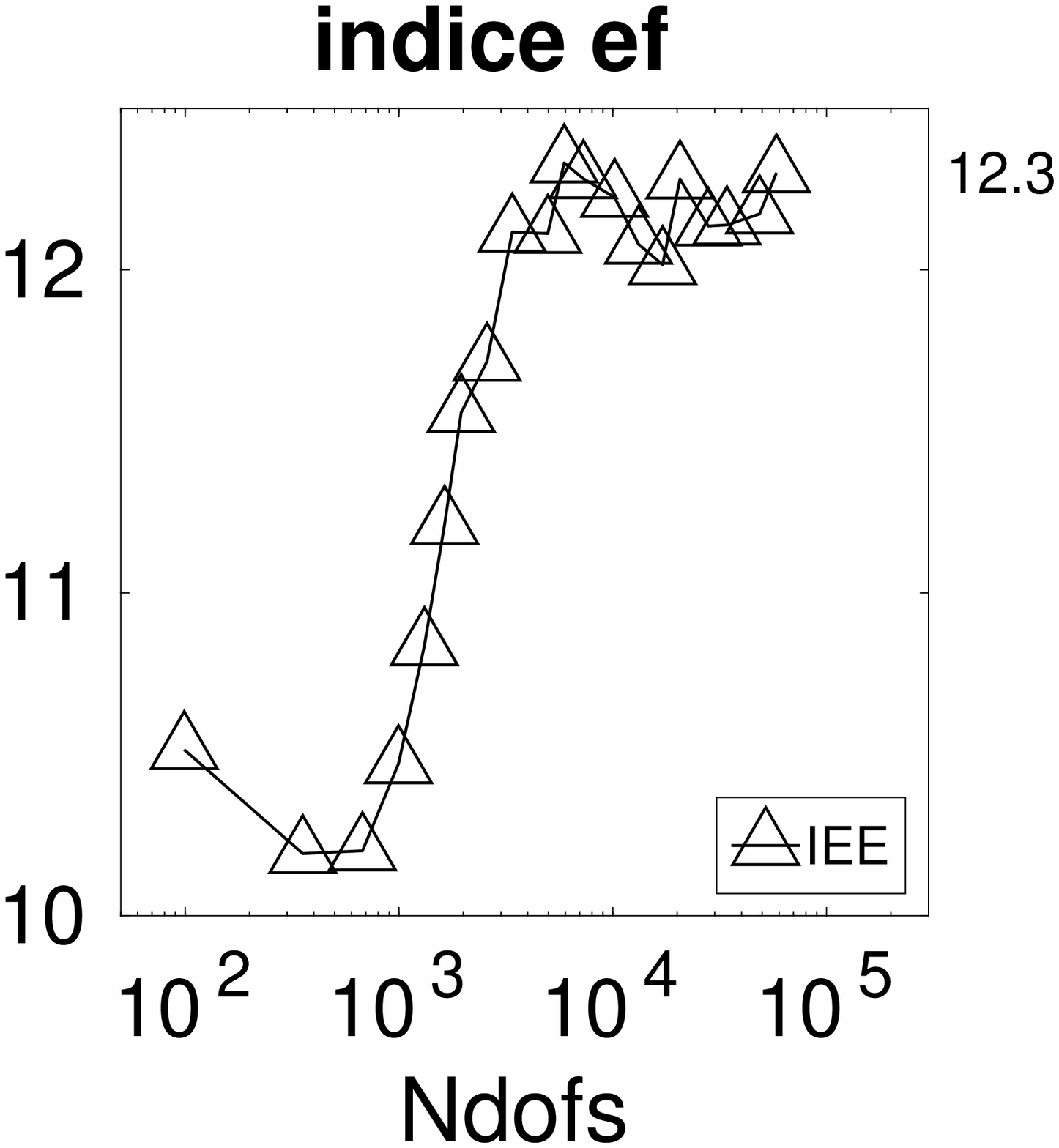} \\
\hspace{-0.8cm}\tiny{(A.2)}\\~\\
\psfrag{indice ef}{\hspace{-1.8cm}\huge{Effectivity index for $p = 1.4$}}
\includegraphics[trim={0 0 0 0},clip,width=4.0cm,height=3.5cm,scale=0.65]{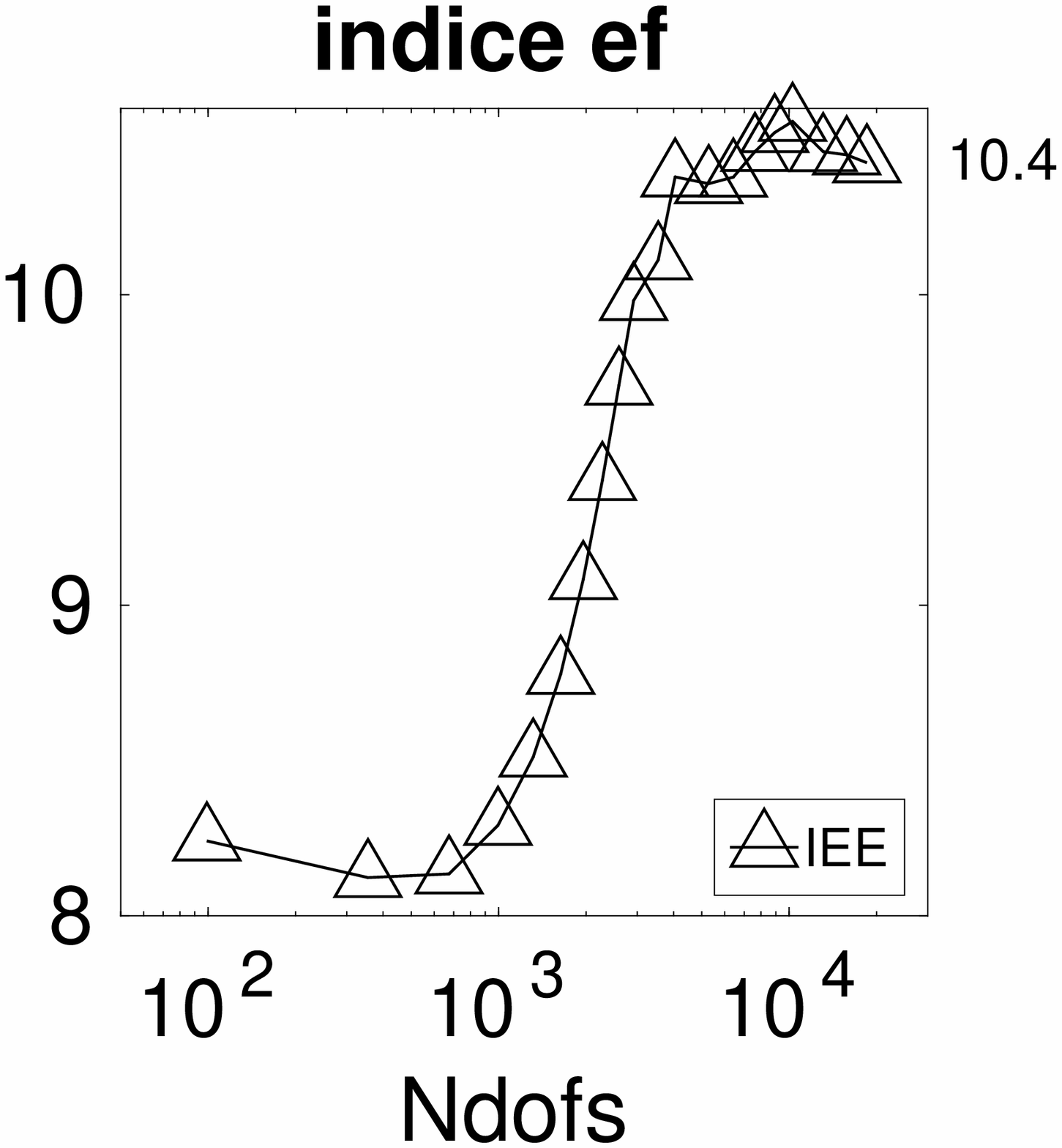} \\
\hspace{-0.8cm}\tiny{(B.2)}\\~\\
\psfrag{indice ef}{\hspace{-1.8cm}\huge{Effectivity index for $p = 1.6$}}
\includegraphics[trim={0 0 0 0},clip,width=3.8cm,height=3.5cm,scale=0.65]{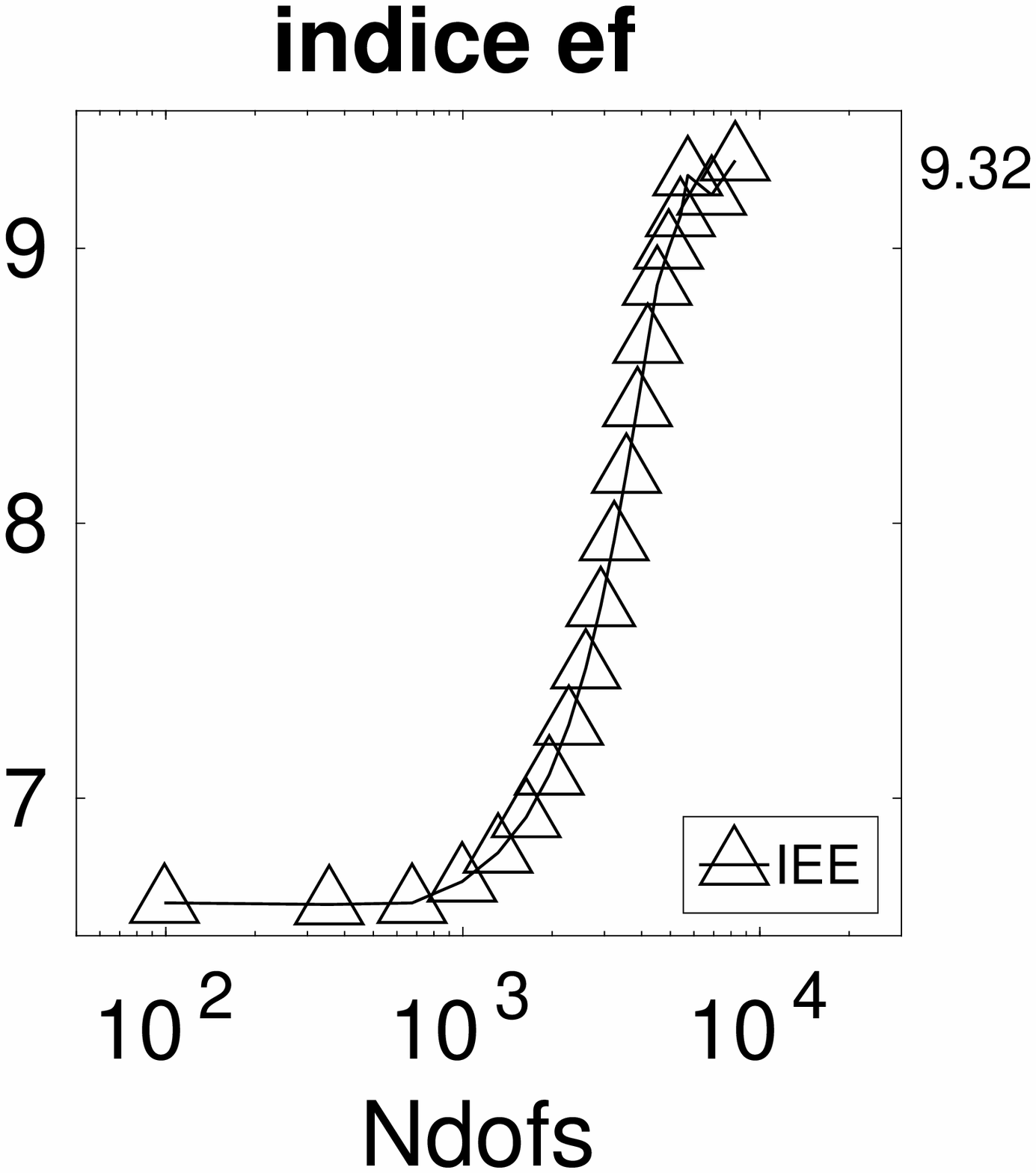} \\
\hspace{-0.8cm}\tiny{(C.2)}\\~\\
\psfrag{indice ef}{\hspace{-1.8cm}\huge{Effectivity index for $p = 1.8$}}
\includegraphics[trim={0 0 0 0},clip,width=3.8cm,height=3.5cm,scale=0.65]{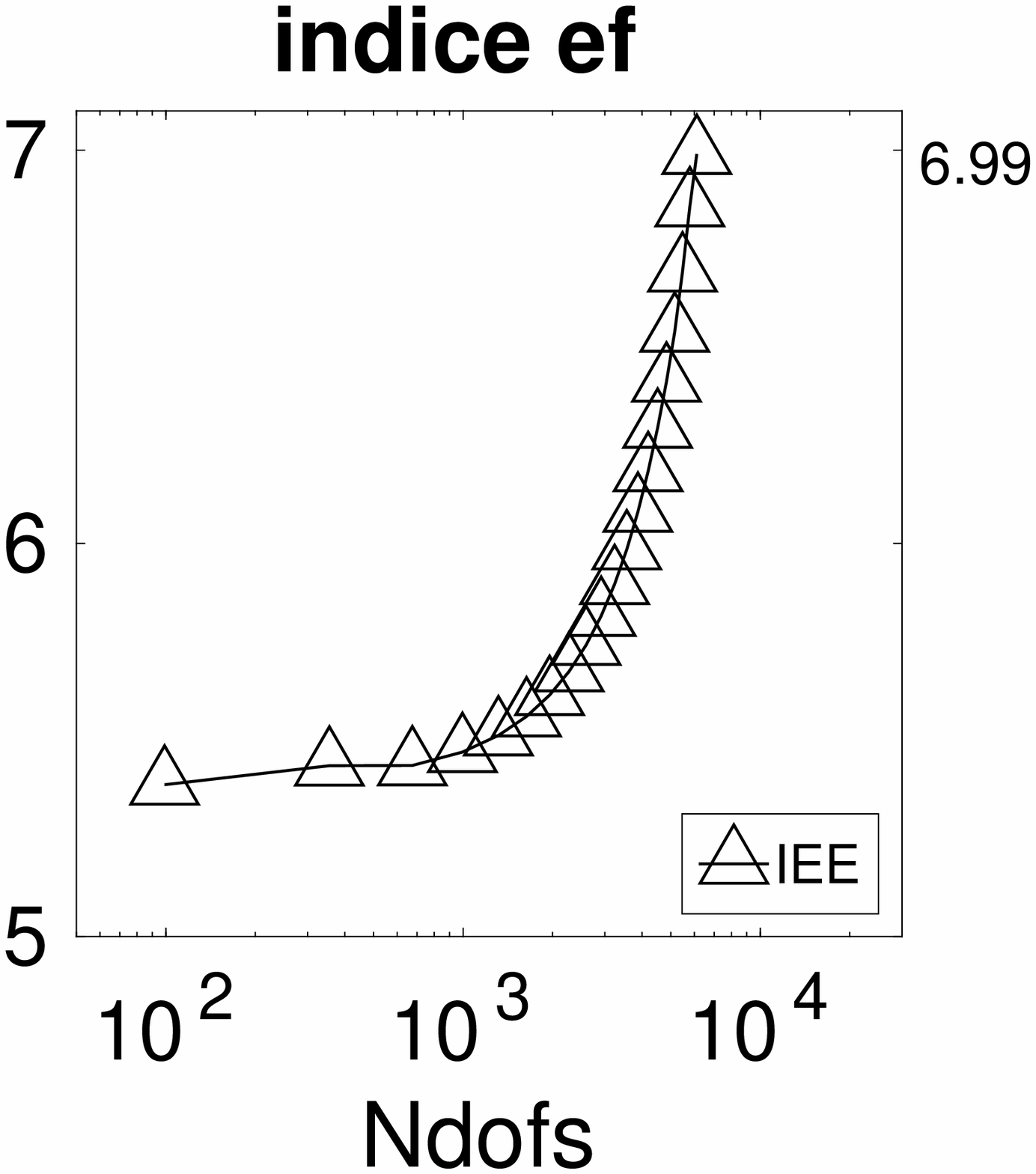} \\
\hspace{-0.8cm}\tiny{(D.2)}
\end{minipage}
\begin{minipage}[c]{0.25\textwidth}\centering
\vspace{-0.5em}
\includegraphics[trim={0 0 0 0},clip,width=2.8cm,height=2.5cm,scale=0.6]{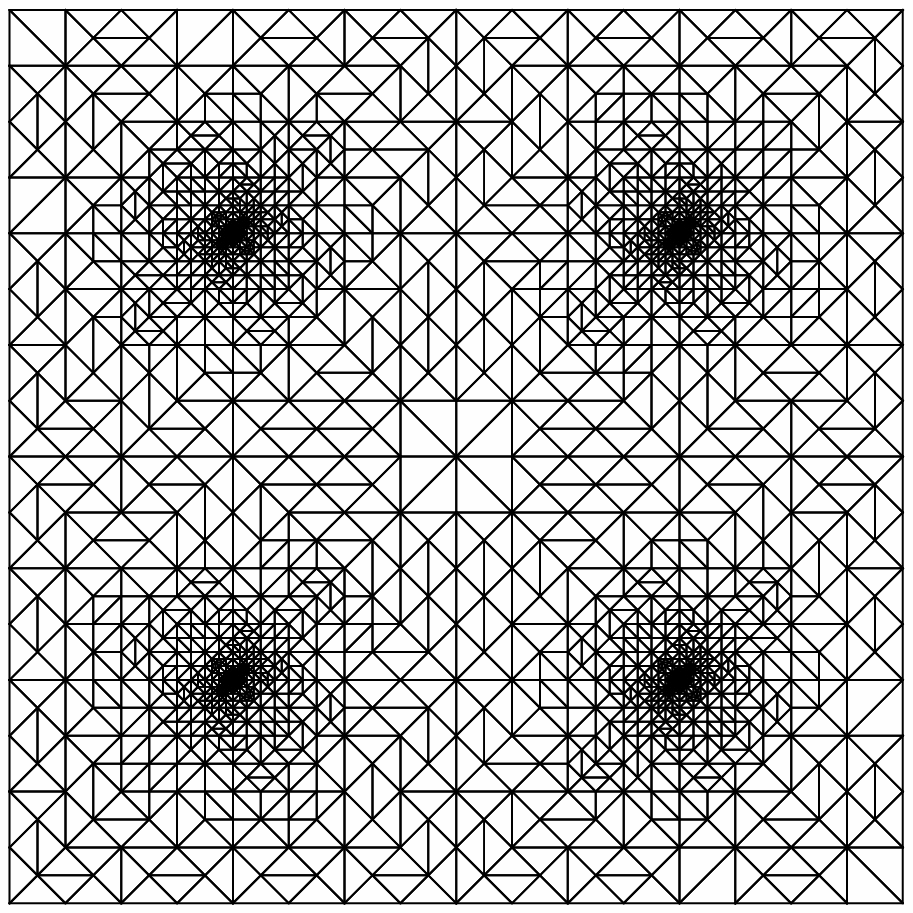}
\\~\\
\quad \tiny{(A.3)}\vspace{3.5em}
\includegraphics[trim={0 0 0 0},clip,width=2.8cm,height=2.5cm,scale=0.6]{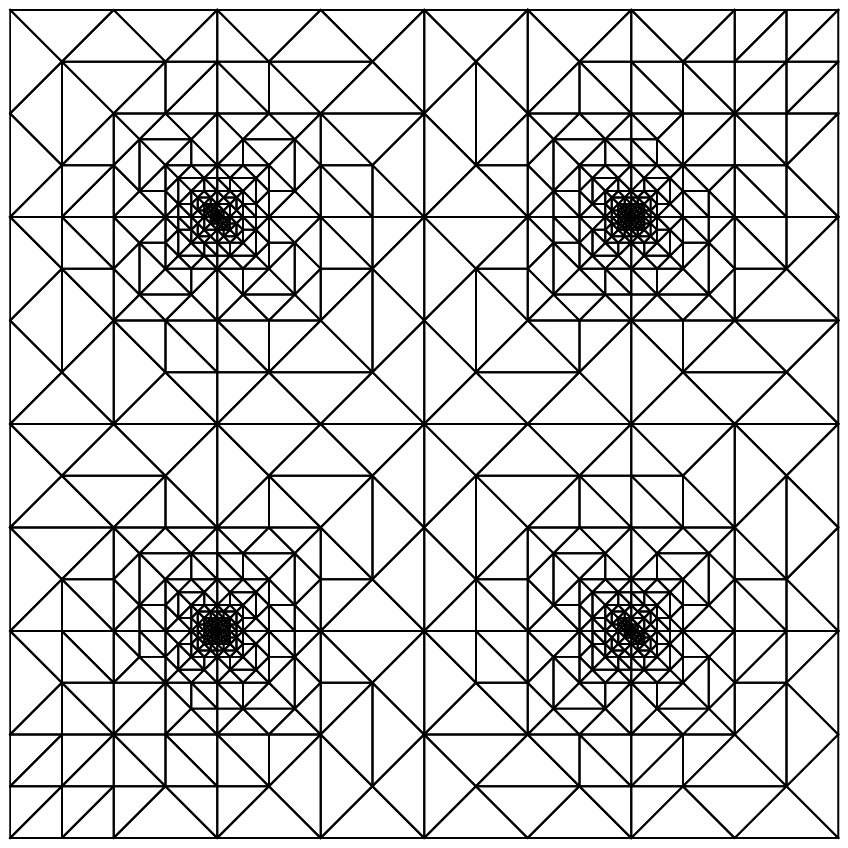} 
\\~\\
\quad \tiny{(B.3)}\vspace{3.5em}
\includegraphics[trim={0 0 0 0},clip,width=2.8cm,height=2.5cm,scale=0.6]{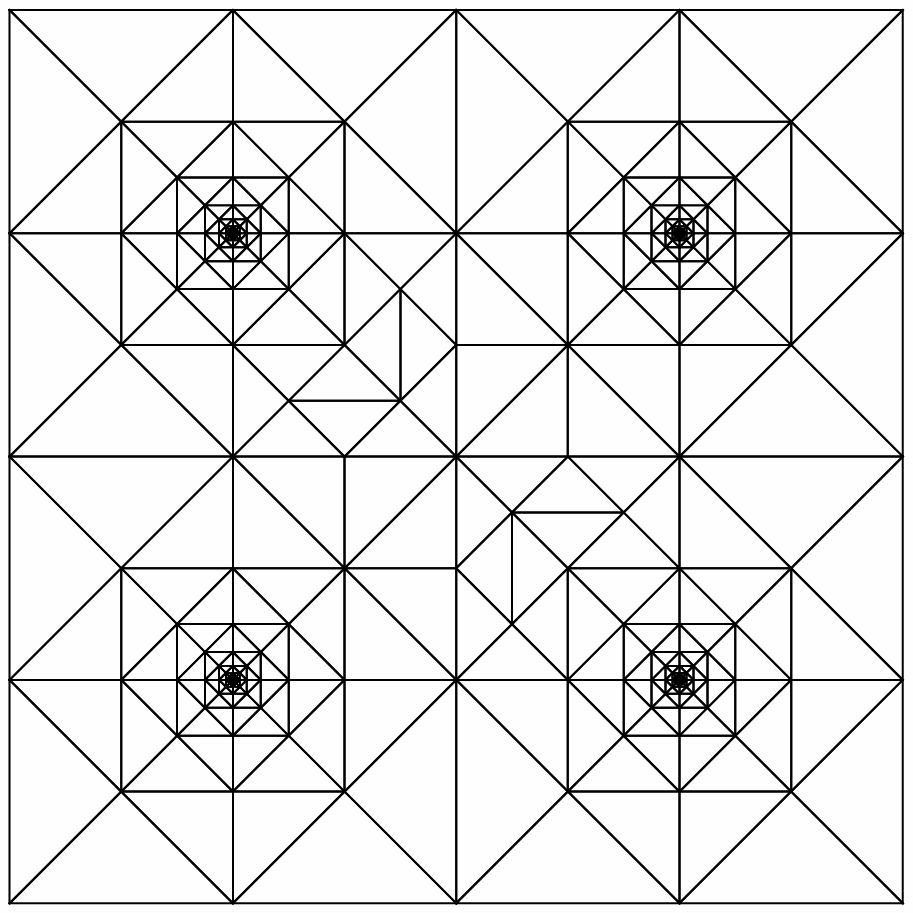}
\\~\\
\quad \tiny{(C.3)}\vspace{3.5em}
\includegraphics[trim={0 0 0 0},clip,width=2.8cm,height=2.5cm,scale=0.6]{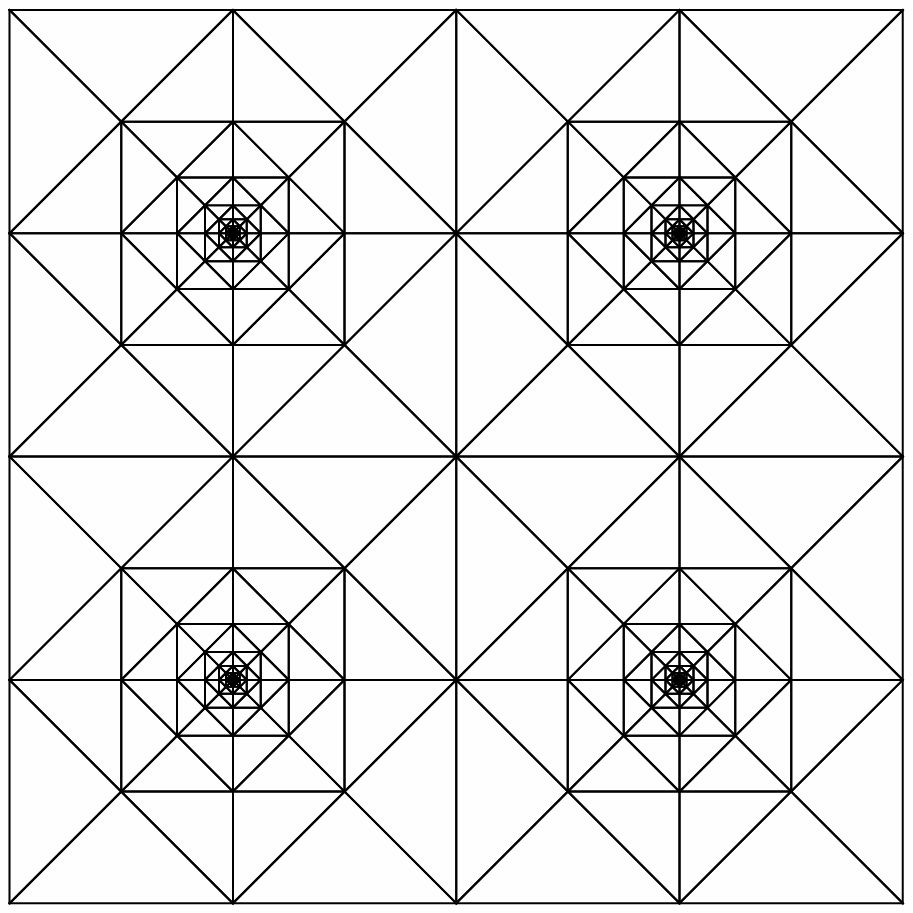}
\\~\\
\quad \tiny{(D.3)}
\end{minipage}
\caption{Example 1: Experimental rates of convergence for the error $\|(\mathbf{e}_{\boldsymbol{u}},e_\pi)\|_{\mathcal{X}}$ and error estimators $\zeta_p$ (A.1)--(D.1); effectivity indices $\mathscr{I}_p$ (A.2)--(D.2) and the 16th adaptively refined mesh (A.3)--(D.3) for $p \in \{1.2,1.4,1.6,1.8\}$.}
\label{fig:ex-1.1}
\end{figure}

\begin{figure}[!ht]
\centering
\psfrag{error}{{\huge $\|(\mathbf{e}_{\boldsymbol{u}},e_{\pi})\|_{\mathcal{X}}$}}
\psfrag{eta}{{\Huge $\zeta_p$}}
\psfrag{O(Ndofs-11)}{\huge$\mathsf{Ndof}^{-1}$}
\psfrag{O(Ndofs-45)}{\huge$\mathsf{Ndof}^{-0.45}$}
\psfrag{Ndofs}{{\huge $\mathsf{Ndof}$}}
\psfrag{IEE unifor}{{\huge $\mathscr{I}$ - unif.}}
\psfrag{IEE adaptiv}{{\huge $\mathscr{I}$ - adap.}}
\psfrag{IEE}{{\huge $\mathscr{I}$}}
\begin{minipage}[c]{0.35\textwidth}
\centering
\psfrag{Example 1 - p105}{\hspace{1.5cm}\huge{Uniform refinement}}
\includegraphics[trim={0 0 0 0},clip,width=3.7cm,height=3.5cm,scale=0.7]{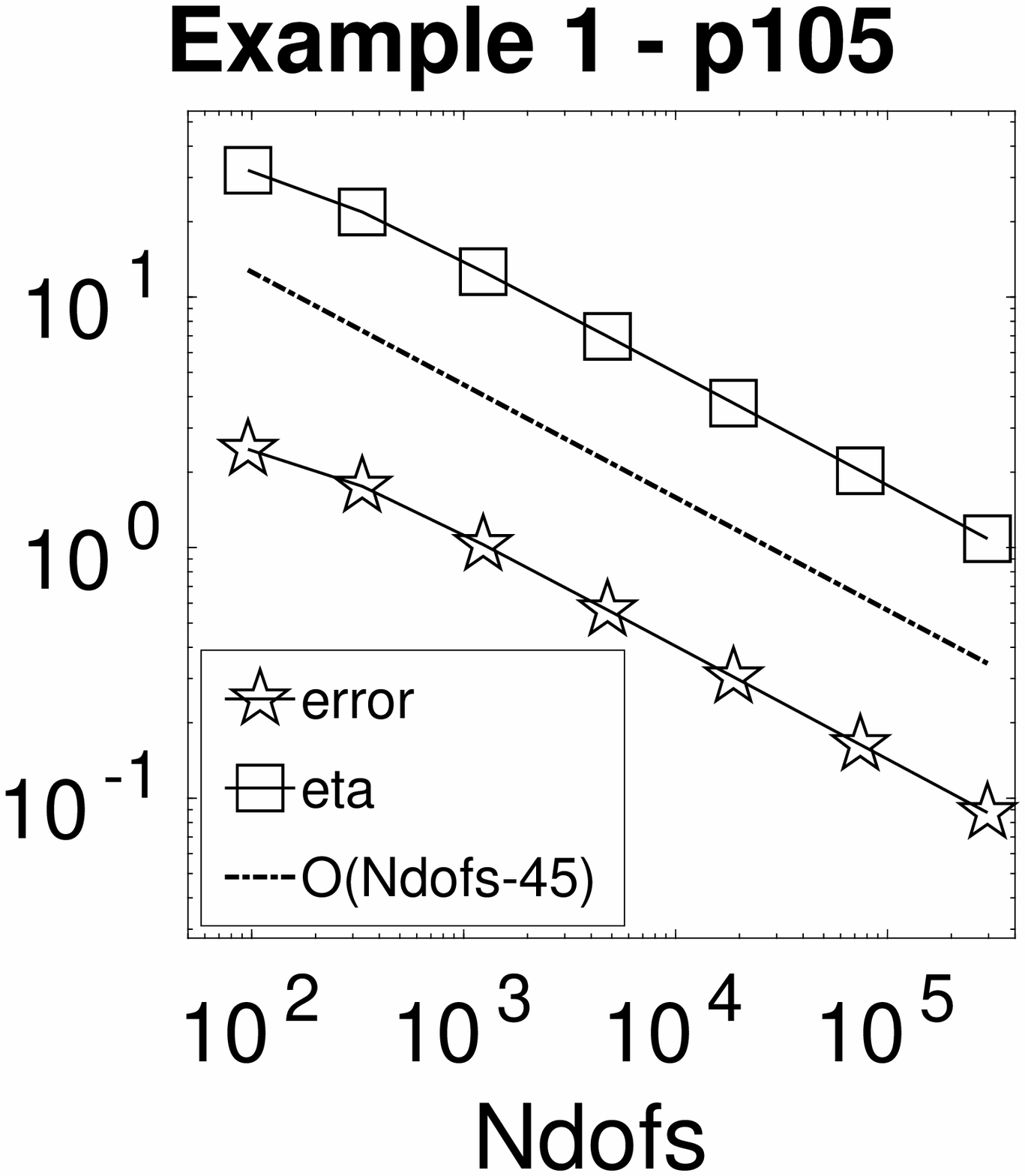} \\
\tiny{(A)}
\end{minipage}
\begin{minipage}[c]{0.35\textwidth}
\centering
\psfrag{Example 1 - p105}{\hspace{1.5cm}\huge{Adaptive refinement}}
\includegraphics[trim={0 0 0 0},clip,width=3.7cm,height=3.5cm,scale=0.7]{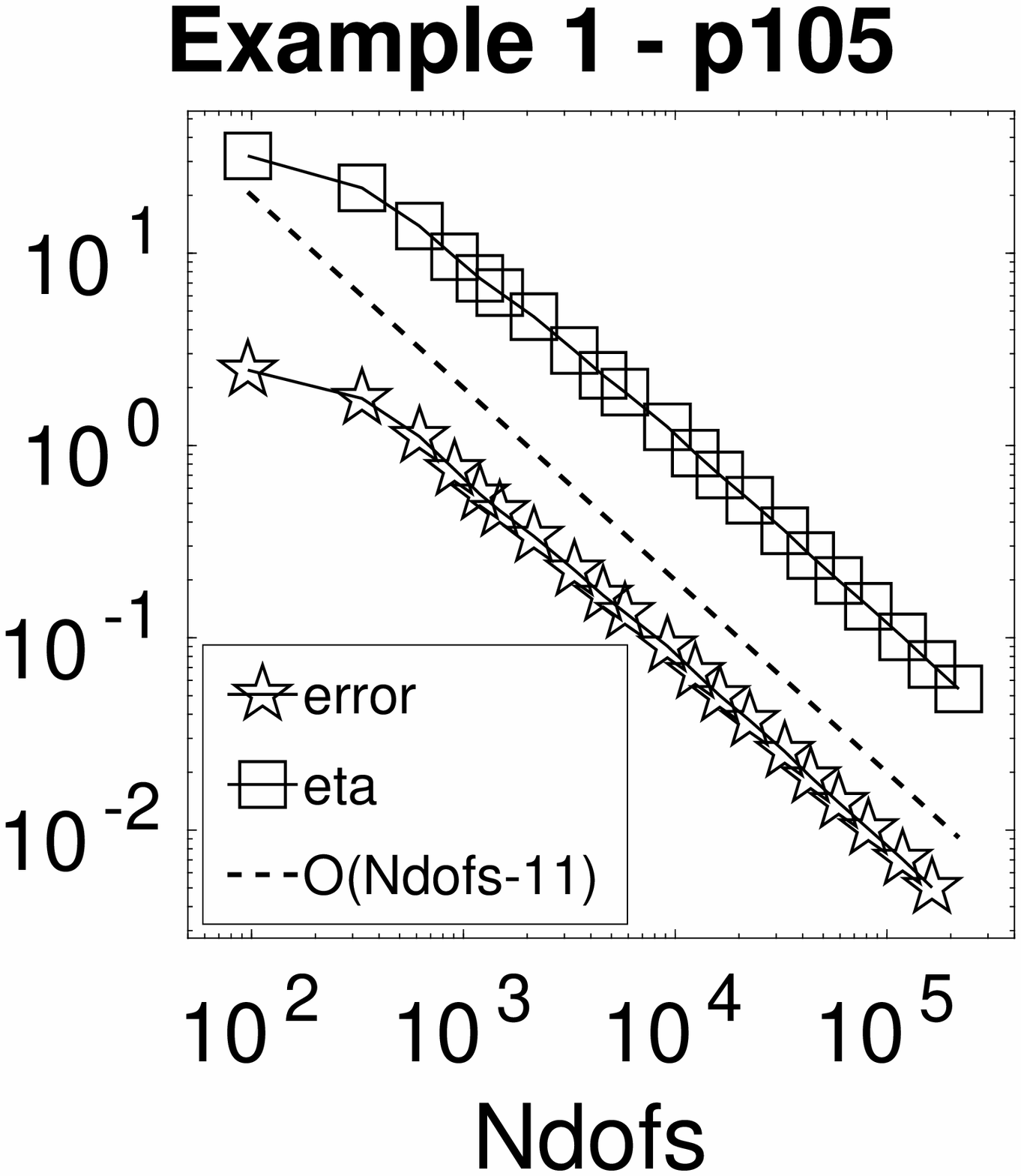} \\
\tiny{(B)}
\end{minipage}
\caption{Example 1: Experimental rates of convergence for the error $\|(\mathbf{e}_{\boldsymbol{u}},e_{\pi})\|_{\mathcal{X}}$ and the error estimator $\zeta_p$ for uniform refinement (A) and adaptive refinement (B).}
\label{fig:ex-1.2}
\end{figure}


\begin{figure}[!ht]
\centering
\psfrag{estimador}{\huge{$\zeta$}}
\psfrag{O(Ndofs-1)}{\huge$\mathsf{Ndof}^{-1}$}
\psfrag{O(Ndofs-11)}{\huge$\mathsf{Ndof}^{-1}$}
\psfrag{Ndofs}{\Huge{$\mathsf{Ndof}$}}
\psfrag{Example 1 - p}{\hspace{1.8cm}\huge{Estimators $\zeta_p$}}
\psfrag{p=1.05}{\huge{$p = 1.05$}}
\psfrag{p=1.2}{\huge{$p = 1.2$}}
\psfrag{p=1.4}{\huge{$p = 1.4$}}
\psfrag{p=1.6}{\huge{$p = 1.6$}}
\psfrag{p=1.8}{\huge{$p = 1.8$}}
\hspace{-1.0cm}
\begin{minipage}[b]{0.38\textwidth}\centering
\includegraphics[trim={0 0 0 0},clip,width=3.5cm,height=3.5cm,scale=0.65]{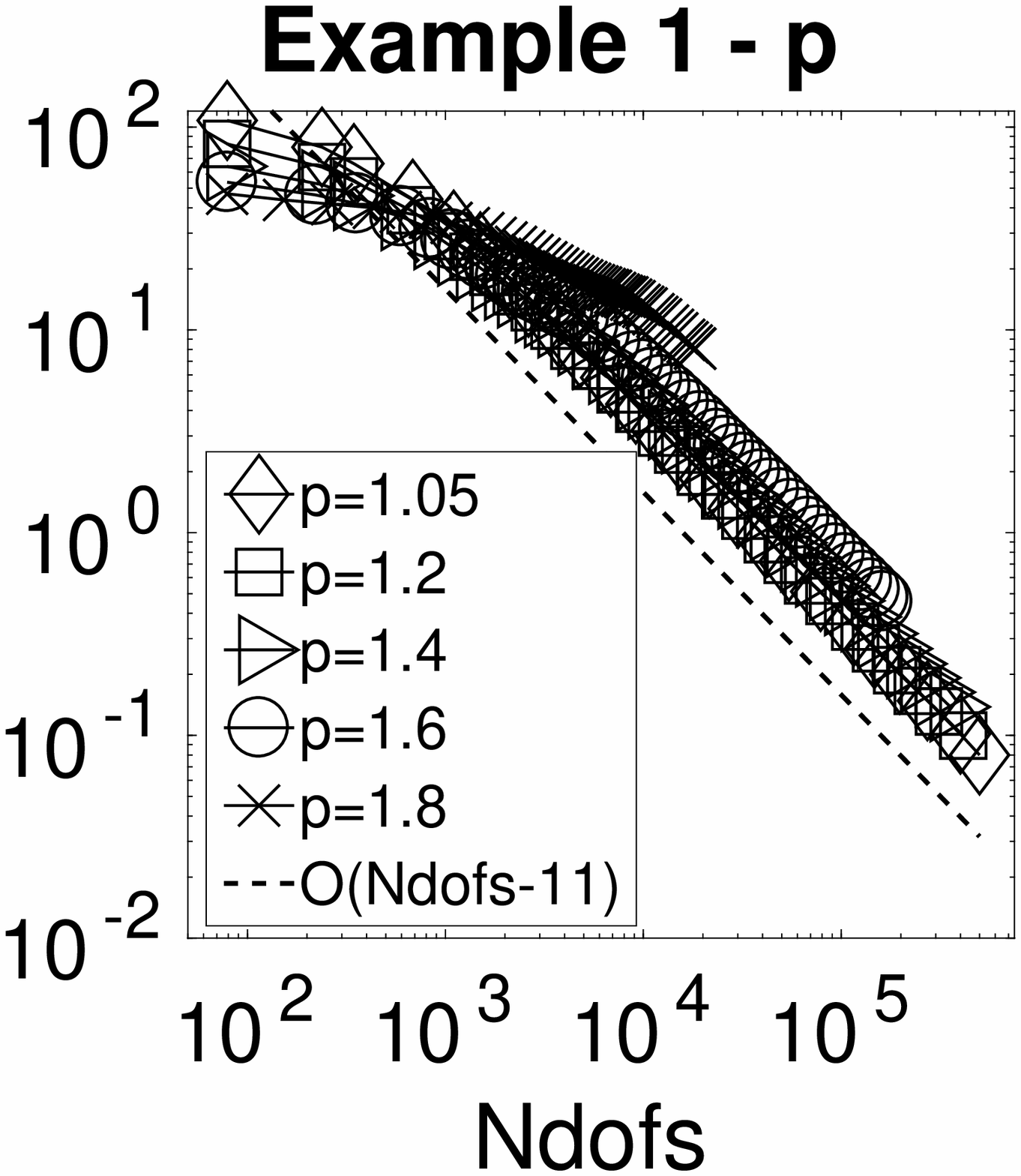} \\
\tiny{(A)}
\hspace{-1cm}
\end{minipage}
\begin{minipage}[b]{0.3\textwidth}\centering
\scriptsize{$|\boldsymbol{u}_{\mathscr{T}}|$}\\
\includegraphics[trim={0 0 0 0},clip,width=3.3cm,height=3.0cm,scale=0.5]{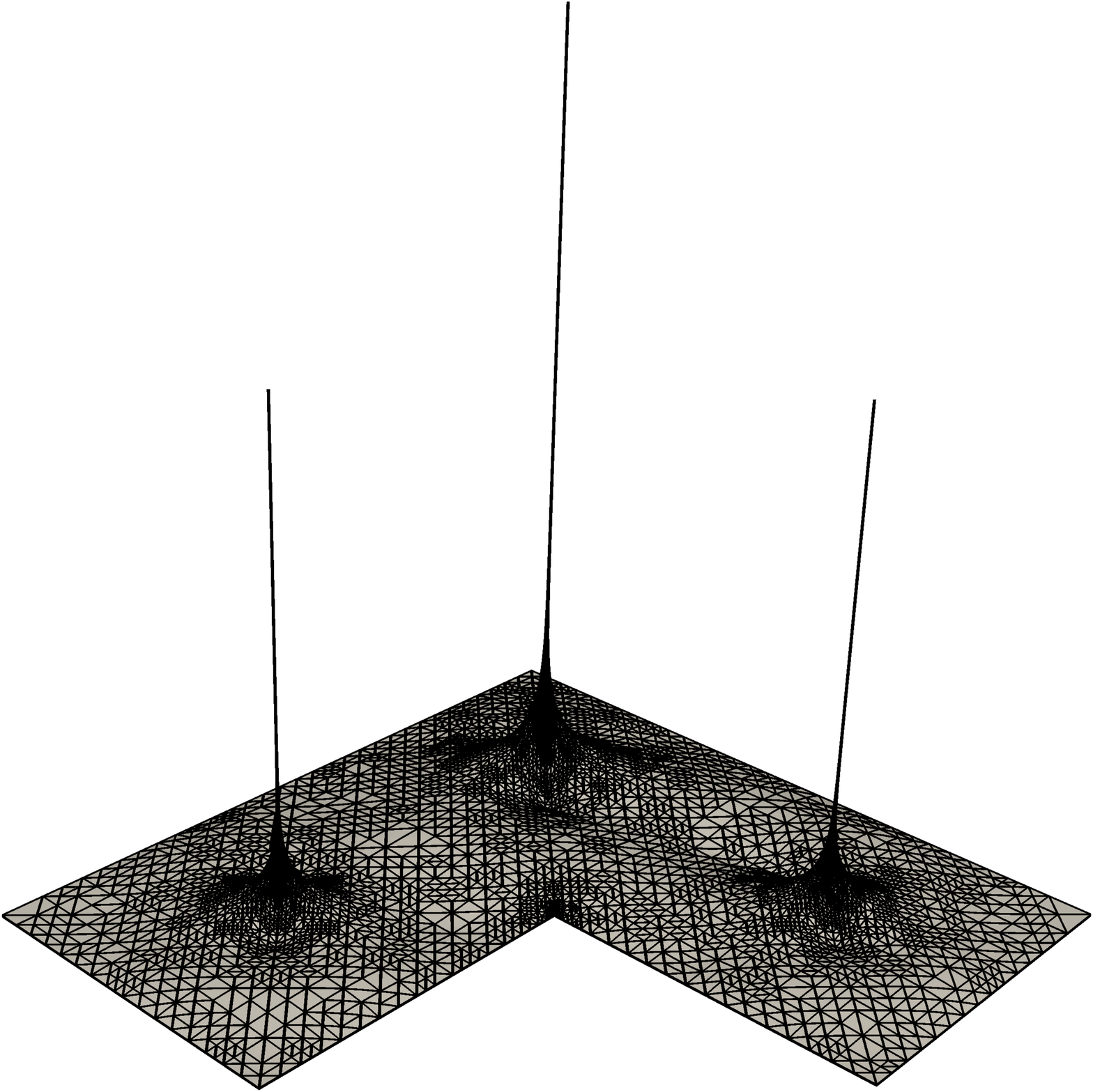} \\
\qquad \tiny{(B)}
\end{minipage}
\begin{minipage}[b]{0.3\textwidth}\centering
\scriptsize{$\pi_{\mathscr{T}}$}\\
\includegraphics[trim={0 0 0 0},clip,width=3.3cm,height=3.0cm,scale=0.5]{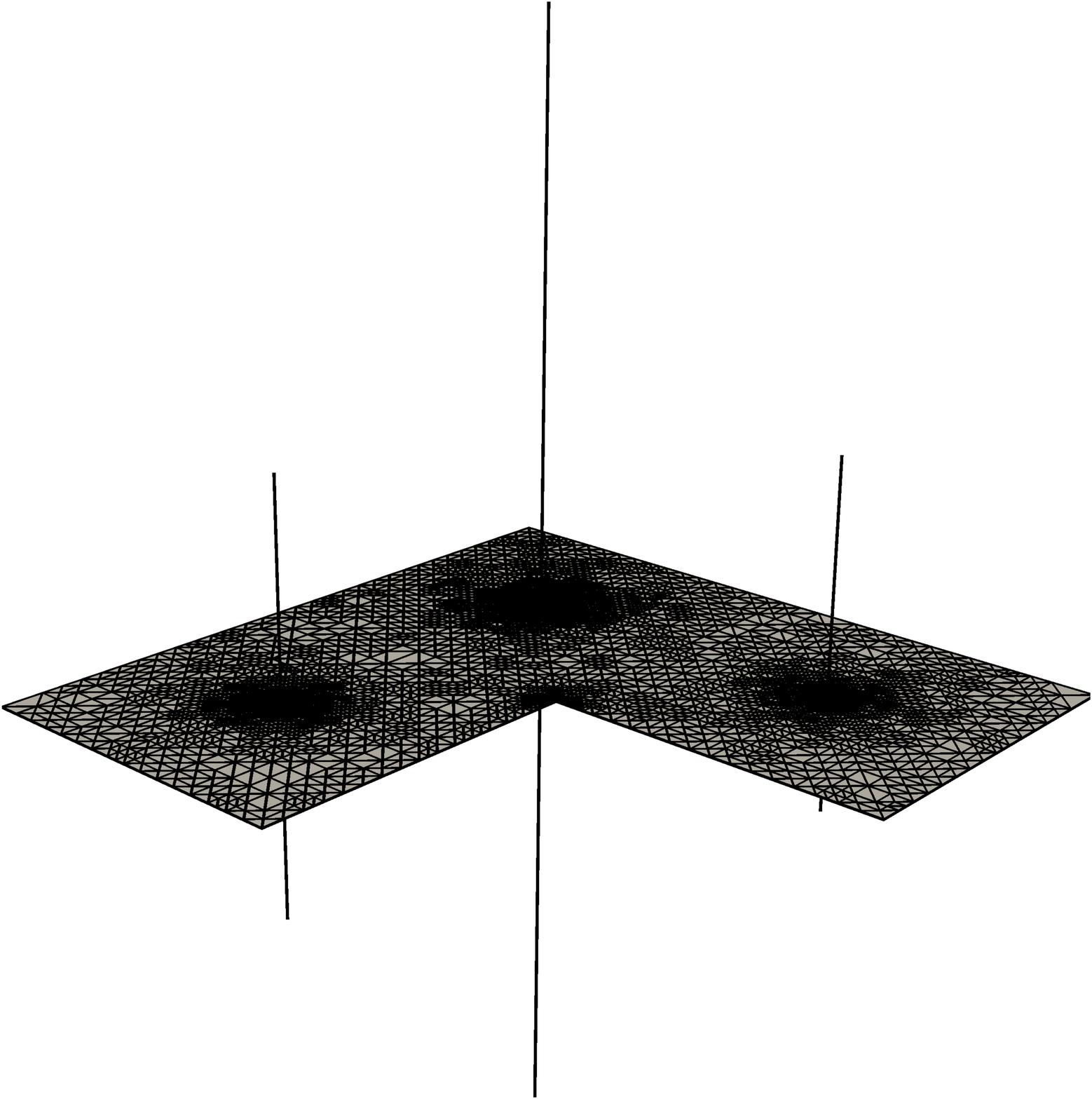} \\
\qquad \tiny{(C)}
\end{minipage}
\\~\\
\begin{minipage}[b]{0.3\textwidth}\centering
\includegraphics[trim={0 0 0 0},clip,width=3.2cm,height=3.5cm,scale=0.5]{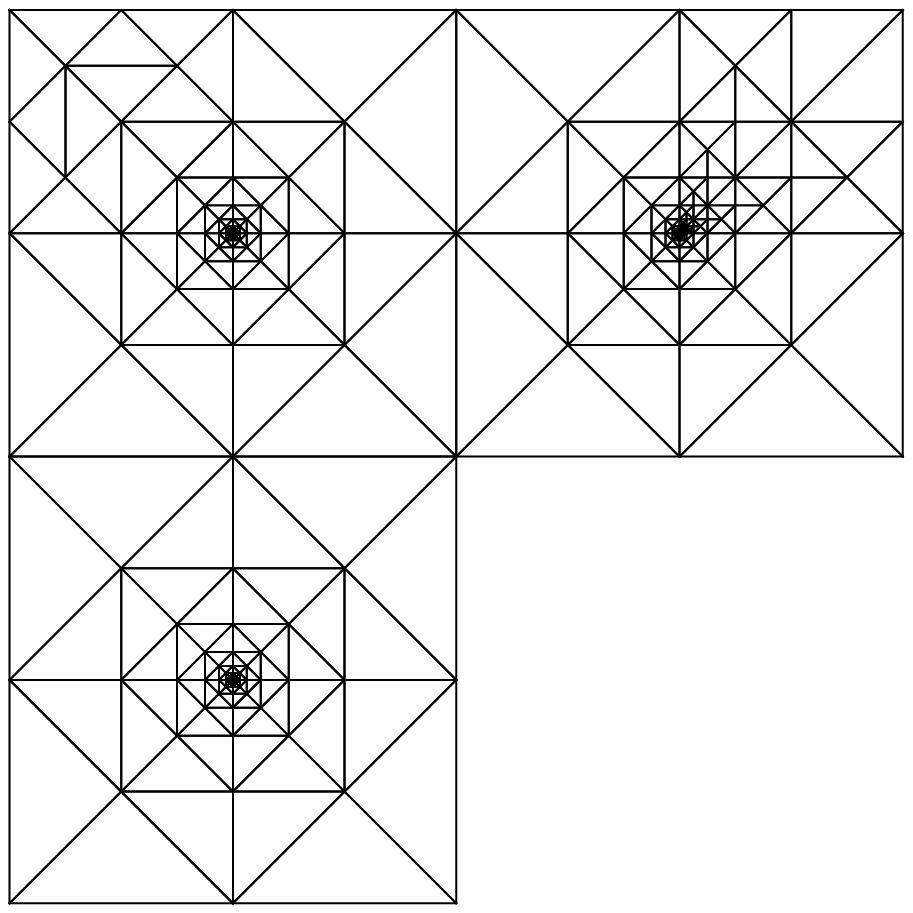}\\
\qquad \tiny{(D)}
\end{minipage}
\begin{minipage}[b]{0.3\textwidth}\centering
\includegraphics[trim={0 0 0 0},clip,width=3.2cm,height=3.5cm,scale=0.5]{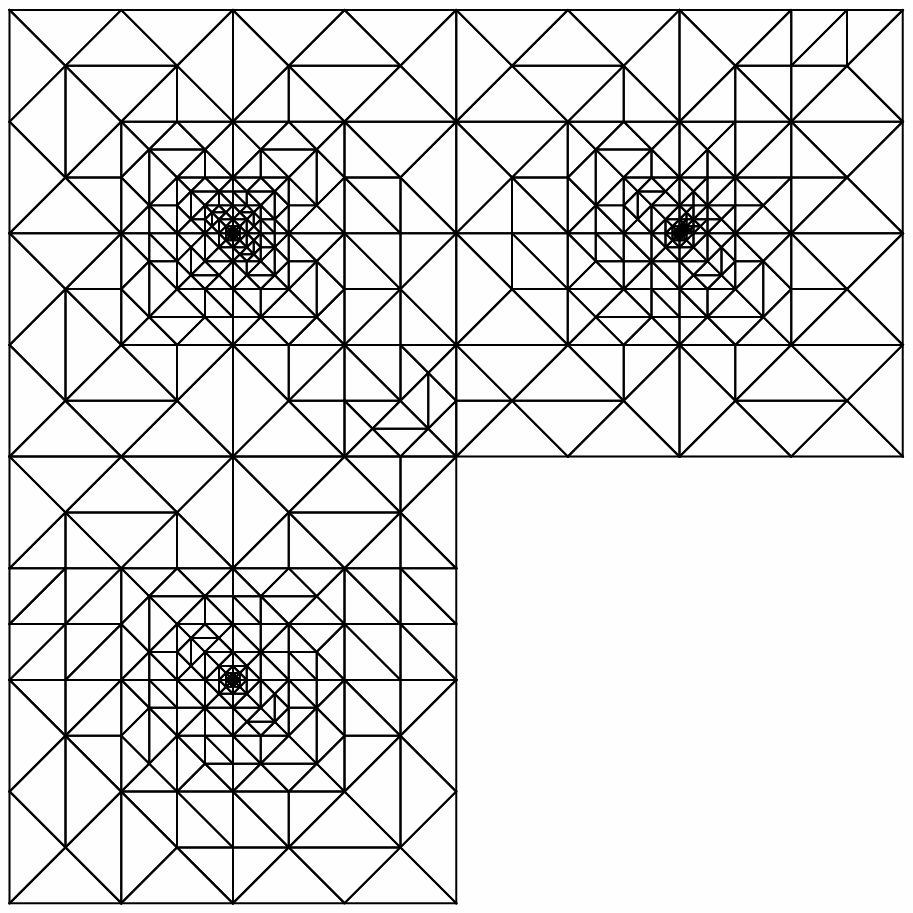}\\
\qquad \tiny{(E)}
\end{minipage}
\begin{minipage}[b]{0.3\textwidth}\centering
\includegraphics[trim={0 0 0 0},clip,width=3.2cm,height=3.5cm,scale=0.5]{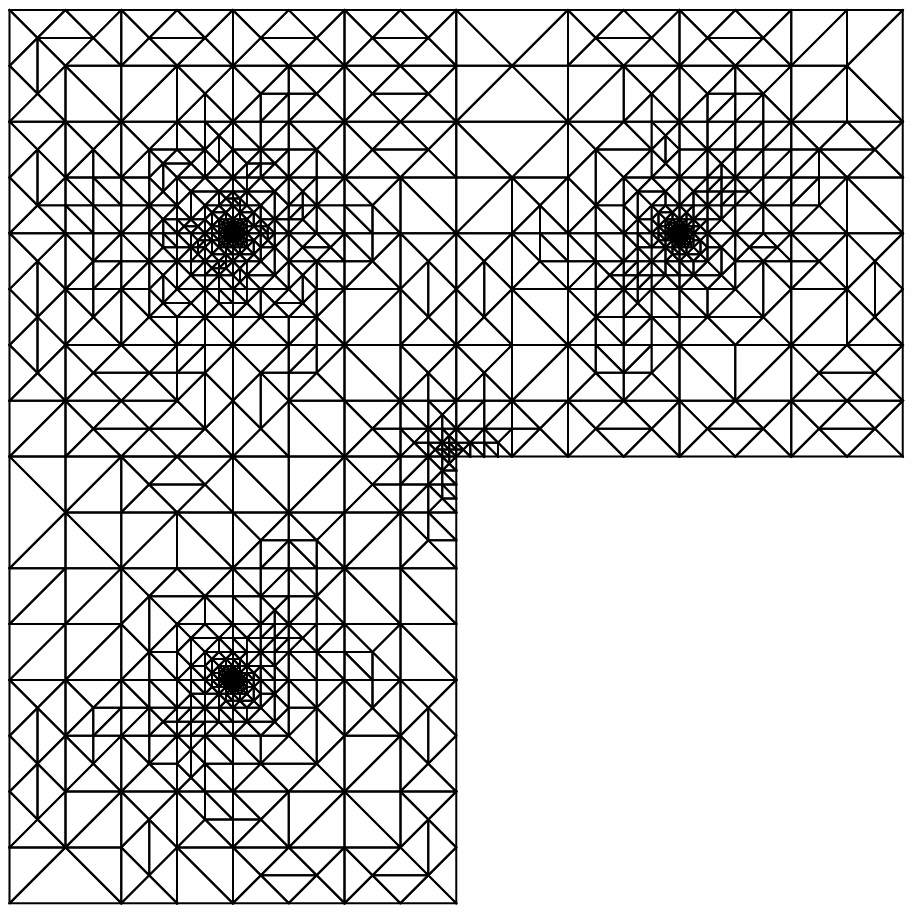}\\
\qquad \tiny{(F)}
\end{minipage}
\caption{Example 2: Experimental rates of convergence for the error estimators $\zeta_p$, for $p \in \{1.05,1.2,1.4,1.6,1.8\}$ (A), the finite element approximation of $|\boldsymbol{u}_{\mathscr{T}}|$ (B) and $\pi_{\mathscr{T}}$ (C) obtained on the 30th adaptively refined mesh and the meshes obtained after 10 (D), 20 (E) and 30 (F) iterations of the adaptive loop ($p = 1.4$).}
\label{fig:ex_2}
\end{figure}

\begin{remark}[Influence of $p$ on $\mathsf{Ndof}$]
In Figure \ref{fig:ex-1.1}, we present experimental rates of convergence for the estimators $\zeta_p$ and the total error $\|(\mathbf{e}_{\boldsymbol{u}},e_\pi)\|_{\mathcal{X}}$ for different values of the integrability index $p$. Notice that, each plot involves a different range of numbers of degrees of freedom. The reported numerical results suggest that this may be due to the fact that the adaptive refinement depends on the value of $p$: as $p$ increases, the refinement is mostly performed on the elements that are close to the Dirac measure points; see Figure \ref{fig:ex-1.1} (A.3)--(D.3). When $p$ gets close to $2$, after a certain number of adaptive iterations, there are elements $T\in \mathscr{T}$, around the singular points, such that $h_T\approx 10^{-16}$. As a consequence, the assembly calculations reach machine precision numbers and thus make impossible more computations within the adaptive procedure. The same behavior is observed in Figure \ref{fig:ex_2} (A), where for each value of $p$ the estimator $\zeta_p$ involves a different range of numbers of degrees of freedom.
\end{remark}
We now present a three dimensional example with inhomogeneous Dirichlet boundary conditions. 
~\\
\textbf{Example 3 (Convex domain).} In this case we consider $\Omega=(0,1)^3$ and
\[\mathcal{D} = \{ (0.25,0.25,0.25), (0.25,0.25,0.75), (0.75,0.75,0.25), (0.75,0.75,0.75) \}. \]

The solution for this example corresponds to the one described in \eqref{def:fund_sol}.

In Figure~\ref{fig:ex-3.1} we report the results obtained for Example 3. We present, for different values of the integrability index $p\in \{1.1, 1.2, 1.3, 1.4\}$, experimental rates of convergence for $\zeta_p$ and $\|(\mathbf{e}_{\boldsymbol{u}},e_{\pi})\|_{\mathcal{X}}$ and adaptively refined meshes. We observe in subfigures (A.1)--(D.1), optimal experimental rates of convergence for the error estimators $\zeta_p$ and the total error $\|(\mathbf{e}_{\boldsymbol{u}},e_{\pi})\|_{\mathcal{X}}$. In subfigures (A.2)--(D.2), we observe the effect of varying the integrability index $p$ on the adaptively refined meshes. In particular, we appreciate that the adaptive refinement is mostly concentrated on the points $t\in\mathcal{D}$ where the Dirac measures are supported.


\begin{figure}[!ht]
\centering
\psfrag{error total}{{\huge $\|(\mathbf{e}_{\boldsymbol{u}},e_{\pi})\|_{\mathcal{X}}$}}
\psfrag{estimador}{{\Huge $\zeta_p$}}
\psfrag{O(Ndofs-1)}{\huge$\mathsf{Ndof}^{-1}$}
\psfrag{O(Ndofs-233)}{\huge$\mathsf{Ndof}^{-2/3}$}
\psfrag{Ndofs}{{\huge $\mathsf{Ndof}$}}
\psfrag{IEE}{{\huge $\mathscr{I}$}}
\begin{minipage}[c]{0.47\textwidth}
\centering
\psfrag{Example 3 - p11}{\hspace{-0.3cm}\huge{$\|(\mathbf{e}_{\boldsymbol{u}},e_{\pi})\|_{\mathcal{X}}$ and $\zeta_p$ for $p = 1.1$}}
\includegraphics[trim={0 0 0 0},clip,width=2.8cm,height=3.0cm,scale=0.68]{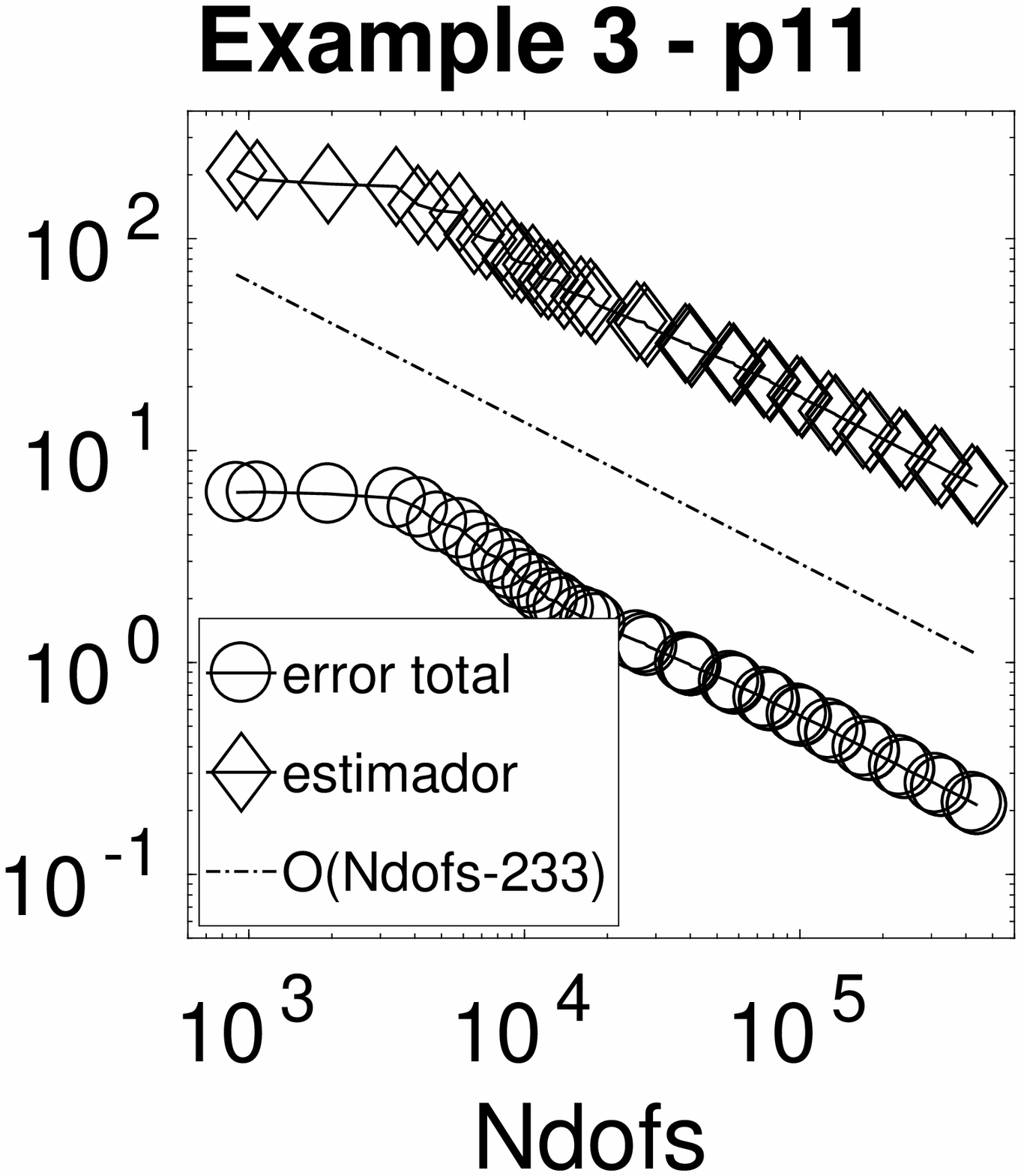} 
\includegraphics[trim={0 0 0 0},clip,width=2.6cm,height=2.5cm,scale=0.6]{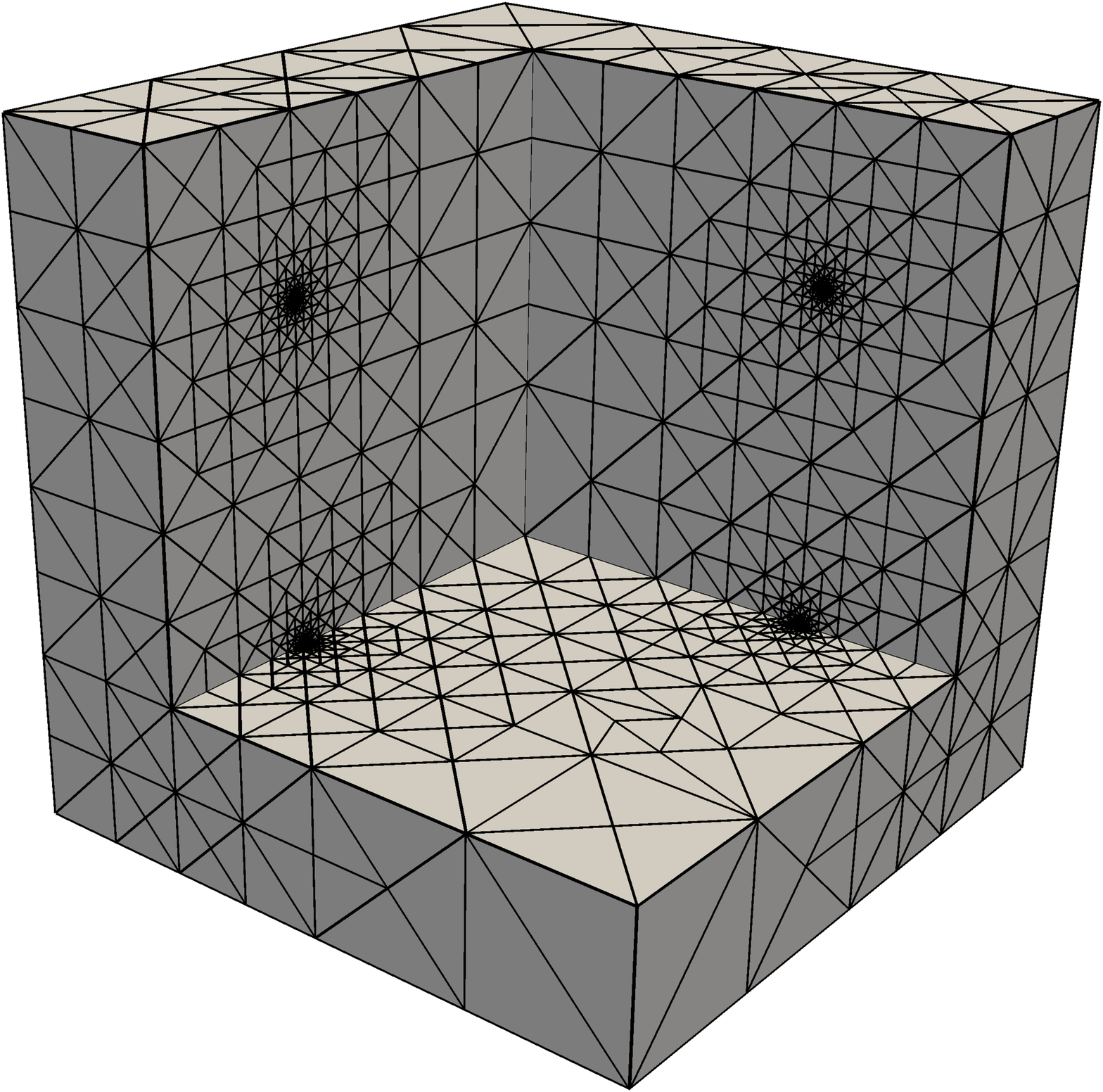}
\\
\tiny{(A.1)} \hspace{2cm} \tiny{(A.2)}\vspace{3.5em}
\end{minipage}
\begin{minipage}[c]{0.47\textwidth}
\centering
\psfrag{Example 3 - p12}{\hspace{-0.3cm}\huge{$\|(\mathbf{e}_{\boldsymbol{u}},e_{\pi})\|_{\mathcal{X}}$ and $\zeta_p$ for $p = 1.2$}}
\includegraphics[trim={0 0 0 0},clip,width=2.8cm,height=3.0cm,scale=0.68]{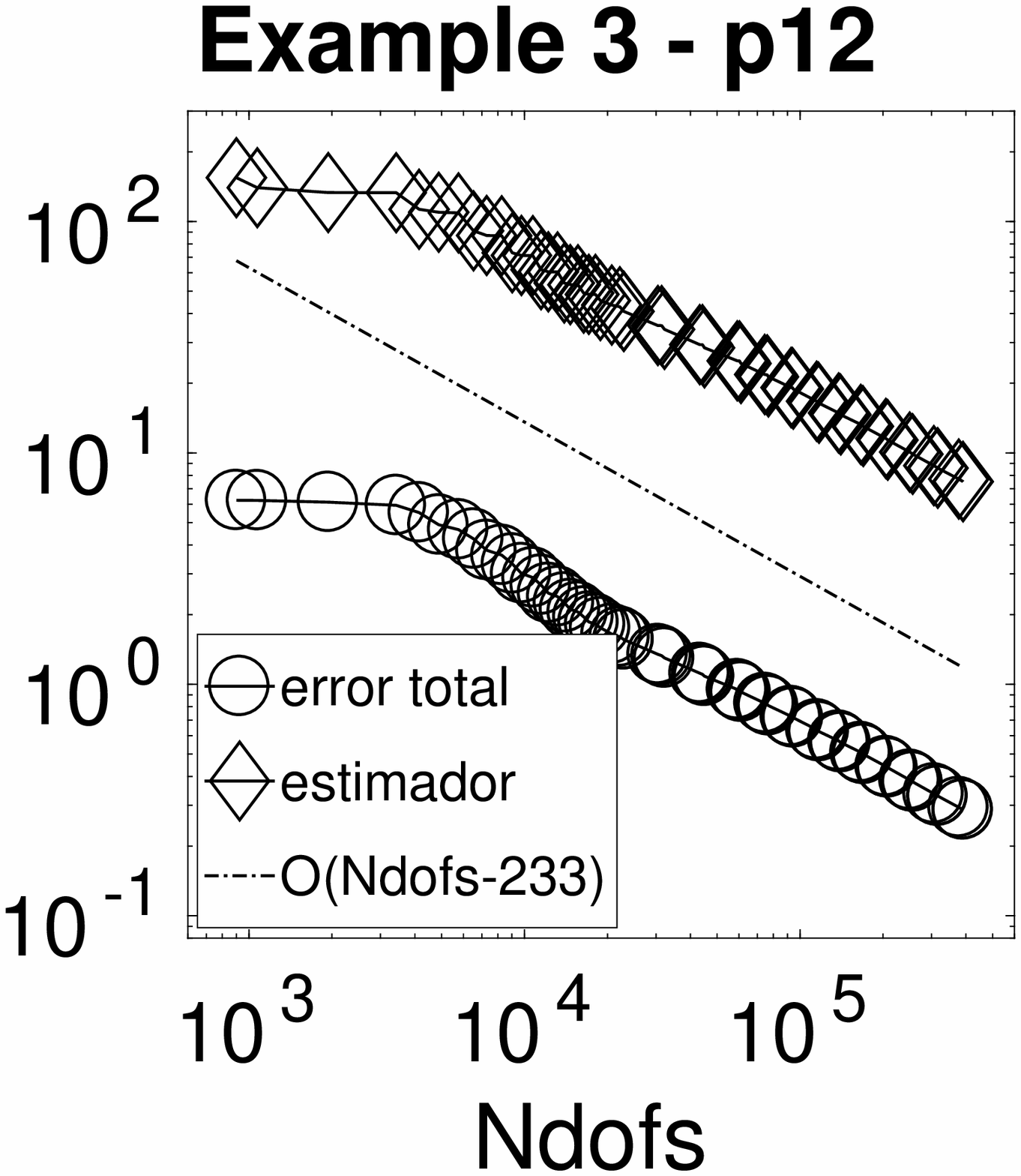} 
\includegraphics[trim={0 0 0 0},clip,width=2.6cm,height=2.5cm,scale=0.6]{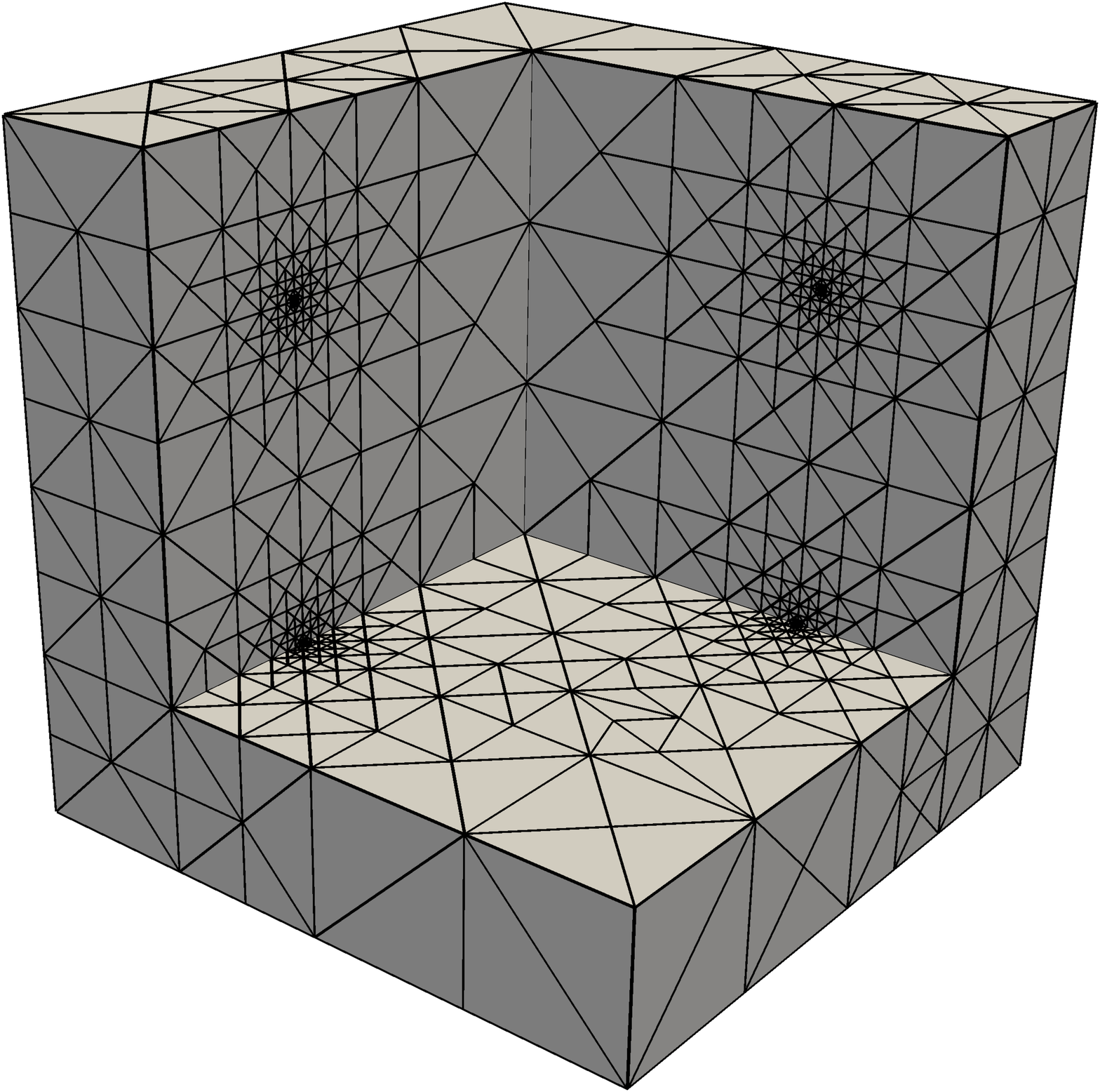}
\\
\tiny{(B.1)}  \hspace{2cm}\tiny{(B.2)}\vspace{3.5em}
\end{minipage}
\begin{minipage}[c]{0.47\textwidth}
\centering
\psfrag{Example 3 - p13}{\hspace{-0.3cm}\huge{$\|(\mathbf{e}_{\boldsymbol{u}},e_{\pi})\|_{\mathcal{X}}$ and $\zeta_p$ for $p = 1.3$}}
\includegraphics[trim={0 0 0 0},clip,width=2.8cm,height=3.0cm,scale=0.68]{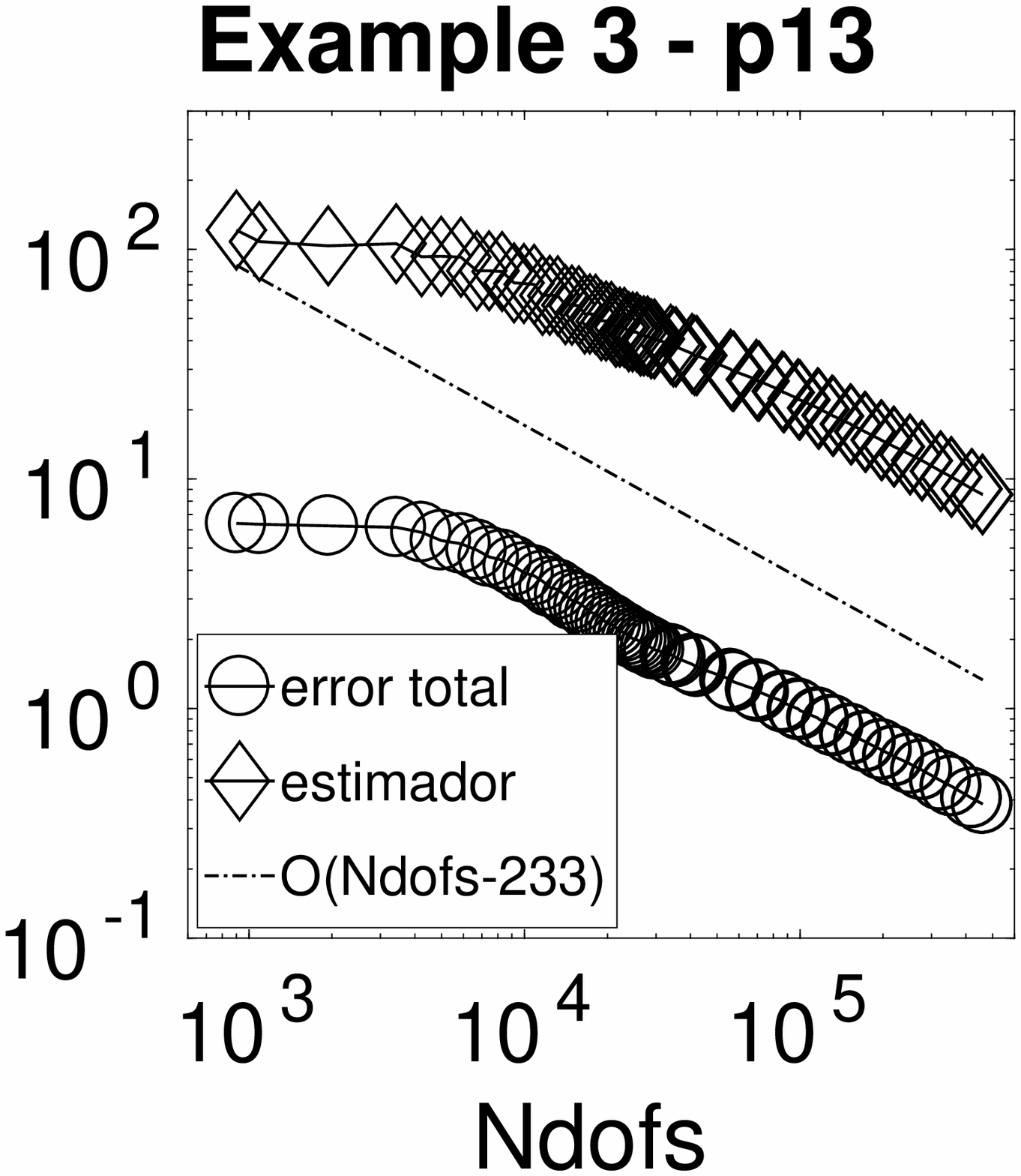} 
\includegraphics[trim={0 0 0 0},clip,width=2.6cm,height=2.5cm,scale=0.6]{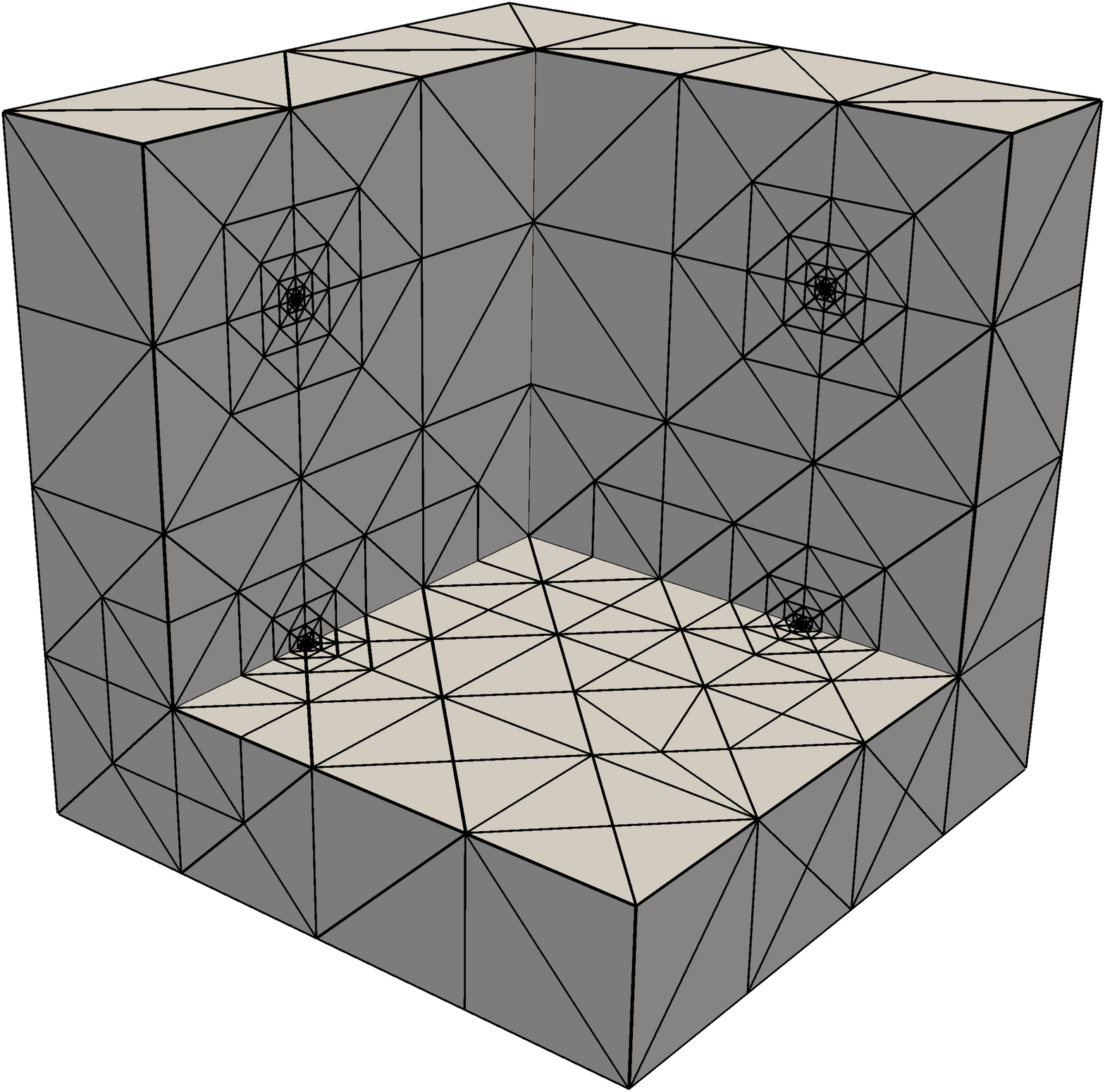}
\\
\tiny{(C.1)}  \hspace{2cm} \tiny{(C.2)}\vspace{3.5em}
\end{minipage}
\begin{minipage}[c]{0.47\textwidth}
\centering
\psfrag{Example 3 - p14}{\hspace{-0.3cm}\huge{$\|(\mathbf{e}_{\boldsymbol{u}},e_{\pi})\|_{\mathcal{X}}$ and $\zeta_p$ for $p = 1.4$}}
\includegraphics[trim={0 0 0 0},clip,width=2.8cm,height=3.0cm,scale=0.68]{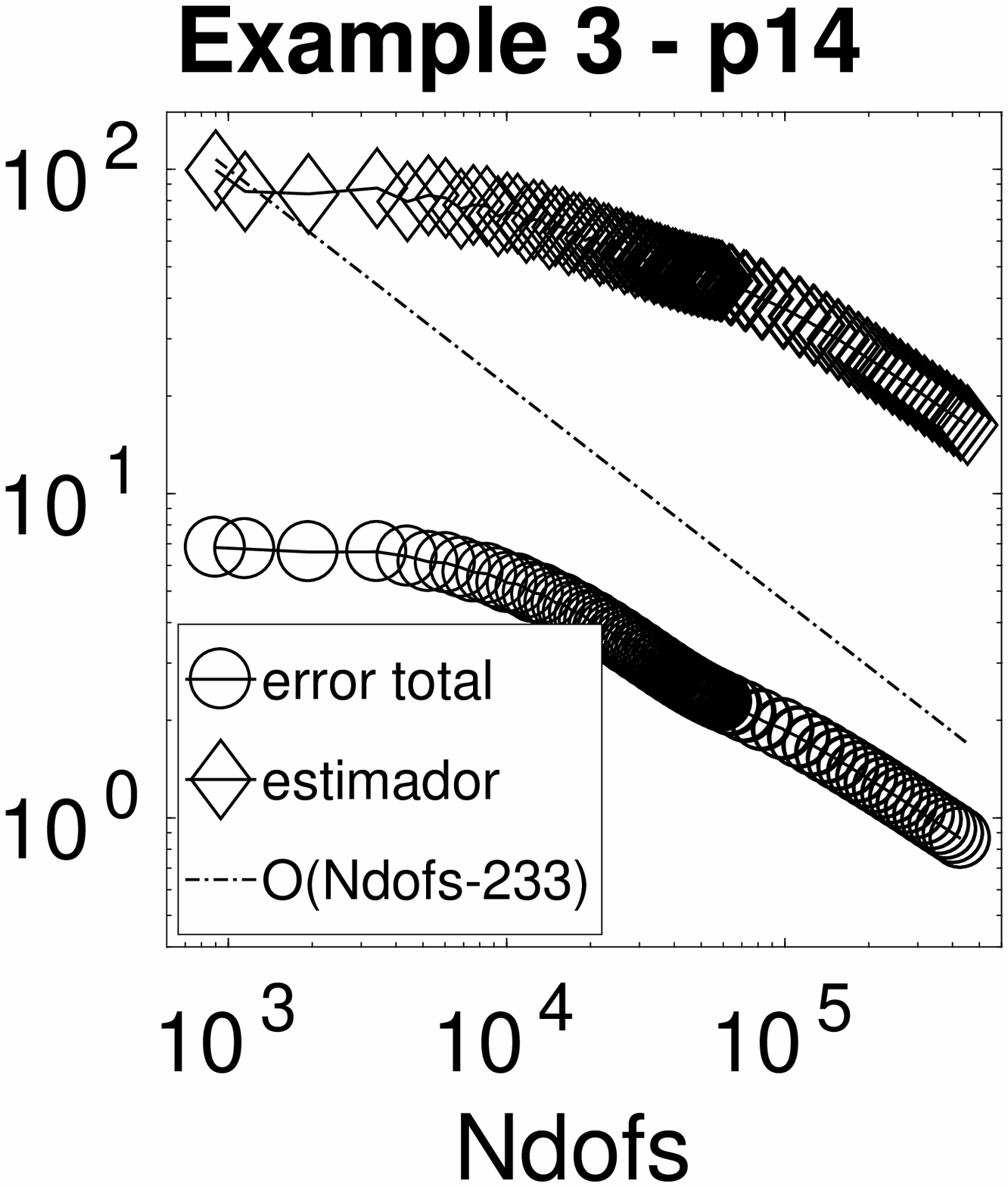} 
\includegraphics[trim={0 0 0 0},clip,width=2.6cm,height=2.5cm,scale=0.6]{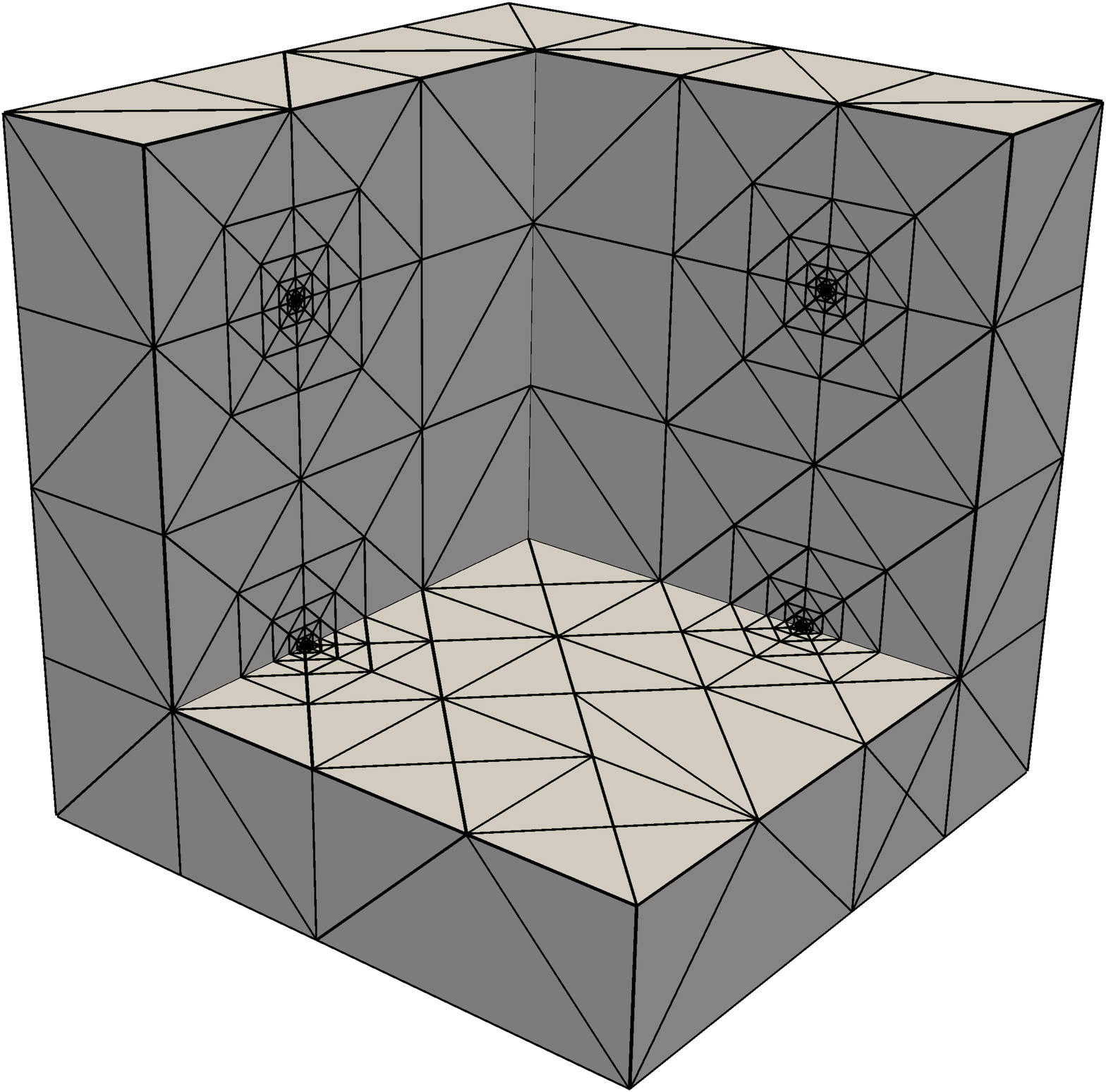}
\\
\tiny{(D.1)}  \hspace{2cm}\tiny{(D.2)}\vspace{3.5em}
\end{minipage}
\caption{Example 3: Experimental rates of convergence for the error $\|(\mathbf{e}_{\boldsymbol{u}},e_\pi)\|_{\mathcal{X}}$ and error estimator $\zeta_p$ (A.1)--(D.1) and the 37th adaptively refined mesh (A.2)--(D.2) for $p \in \{1.1,1.2,1.3,1.4\}$.}
\label{fig:ex-3.1}
\end{figure}

\subsection{A stabilized scheme}\label{sec:numer_stab}
We perform numerical experiments for \emph{low--order stabilized approximation} with the discrete spaces \eqref{eq:V_estab} and \eqref{eq:P_estab}, taking $\ell=0$, and the stabilization parameters $\tau_{\text{div}}=0$, $\tau_T=0$, and $\tau_S=1/12$.

~\\
\textbf{Example 4 (L-shaped domain).} We let $\Omega=(0,1)^2 \setminus [0.5,1)\times (0,0.5]$, $p = 1.4$, $\mathcal{D}=\{(0.75,0.75)\}$, and $\boldsymbol{f}_{(0.75,0.75)} = (1,1)$.

We report in Figure \ref{fig:ex_4} the results obtained for Example 4. We present the finite element approximations of $|{\boldsymbol{u}}_\mathscr{T}|$ and $\pi_\mathscr{T}$, experimental rates of convergence for the error estimators $\zeta_{\text{stab},p}$ and adaptively refined meshes. We observe in subfigure (A), that an optimal experimental rate of convergence for the total error estimator $\zeta_{\text{stab},p}$ is attained. We also observe in the adaptively refined meshes (D)--(E), that the refinement is being concentrated around the re--entrant corner and the source point.


\begin{figure}[!ht]
\centering
\psfrag{estimador}{\huge{$\zeta_{\text{stab},p}$}}
\psfrag{O(Ndofs-12)}{\huge$\mathsf{Ndof}^{-1/2}$}
\psfrag{Ndofs}{\Huge{$\mathsf{Ndof}$}}
\psfrag{Example 1 - p14}{\huge{Estimator $\zeta_{\text{stab},p}$ for $p = 1.4$}}
\hspace{-1.0cm}
\begin{minipage}[b]{0.32\textwidth}\centering
\includegraphics[trim={0 0 0 0},clip,width=3.3cm,height=3.3cm,scale=0.66]{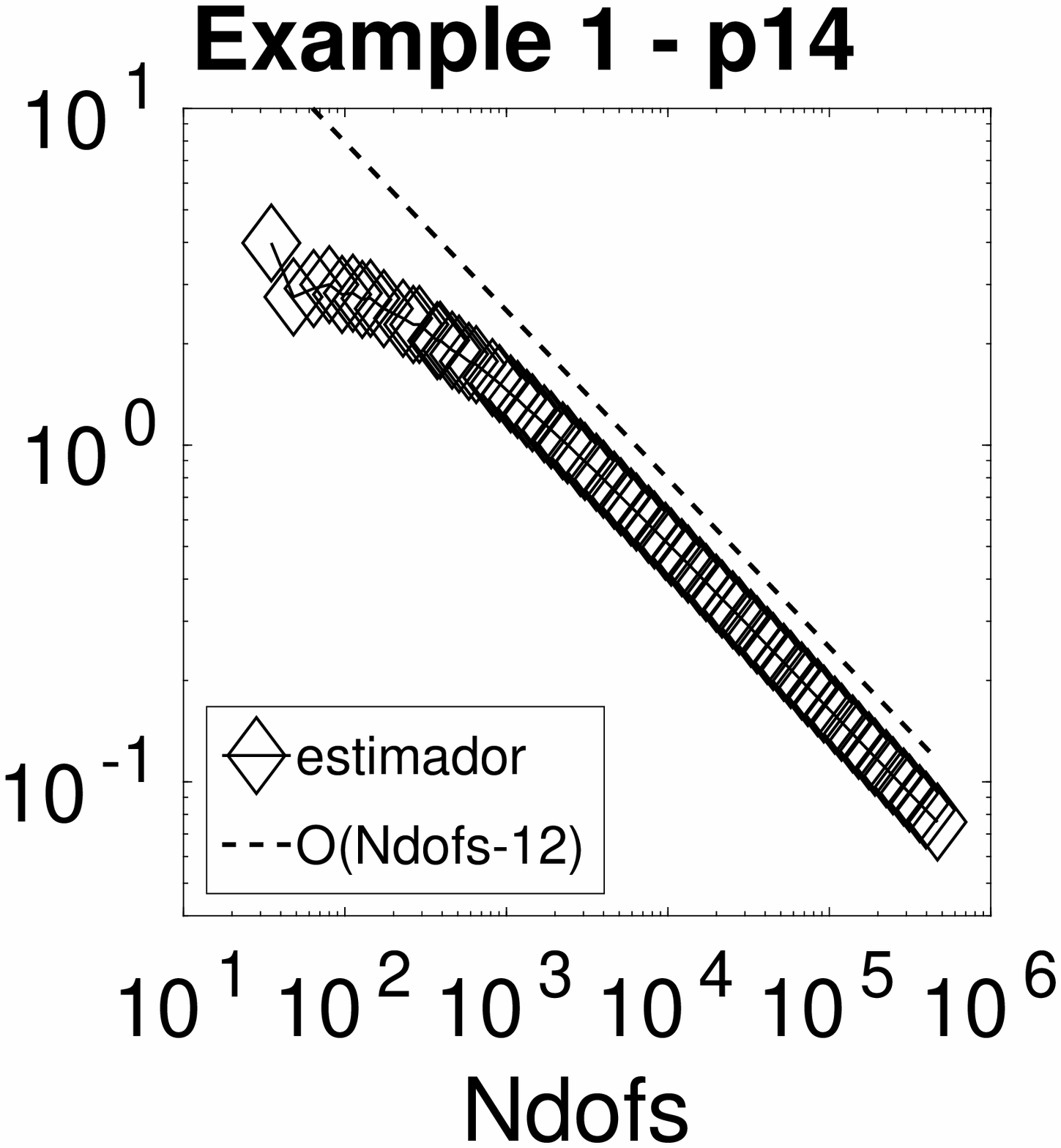} \\
\tiny{(A)}
\end{minipage}
\begin{minipage}[b]{0.27\textwidth}\centering
\scriptsize{$|\boldsymbol{u}_{\mathscr{T}}|$}\\
\includegraphics[trim={0 0 0 0},clip,width=2.8cm,height=2.6cm,scale=0.5]{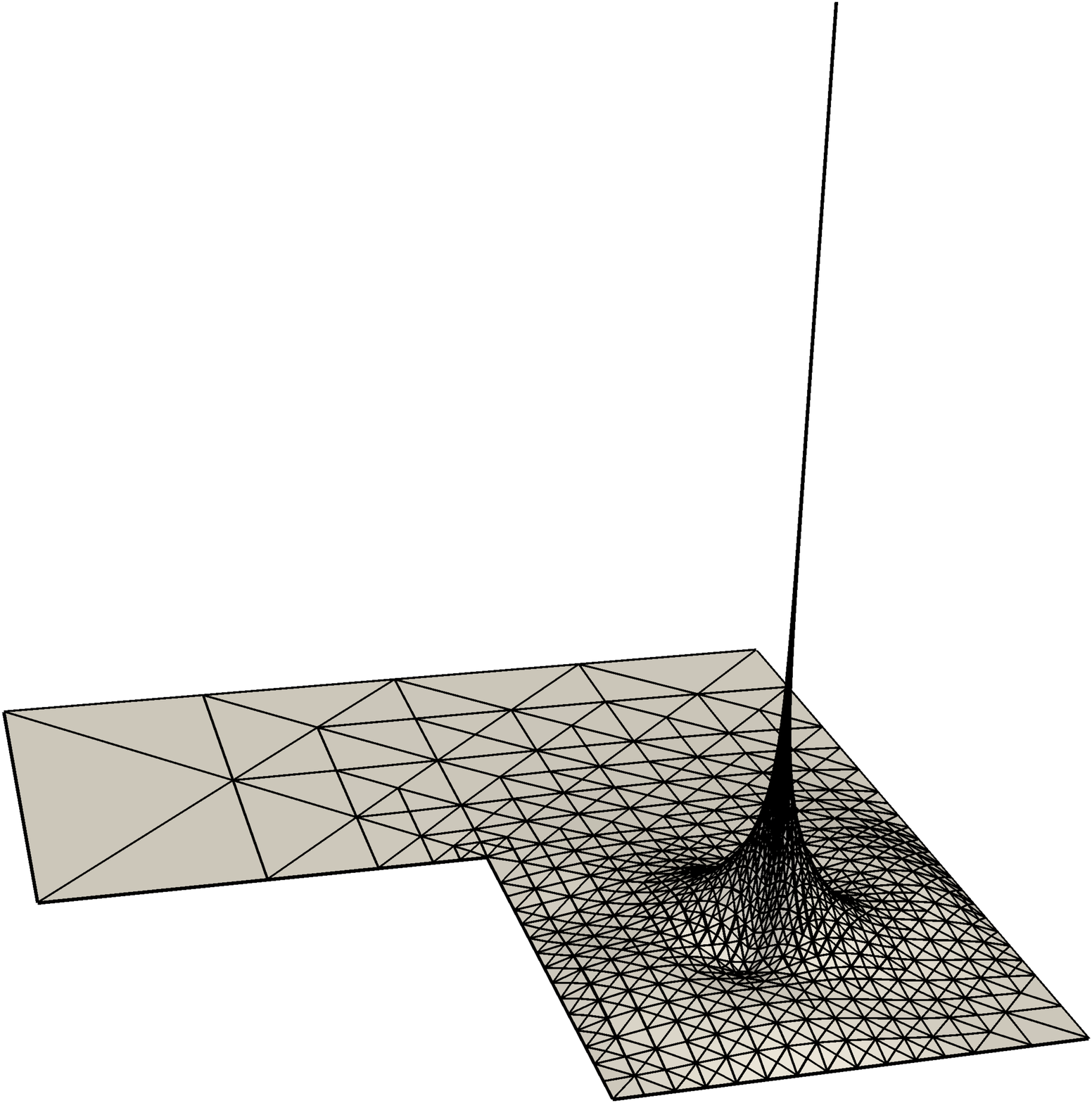} \\
\qquad \tiny{(B)}
\end{minipage}
\begin{minipage}[b]{0.27\textwidth}\centering
\scriptsize{$\pi_{\mathscr{T}}$}\\
\includegraphics[trim={0 0 0 0},clip,width=2.8cm,height=2.6cm,scale=0.5]{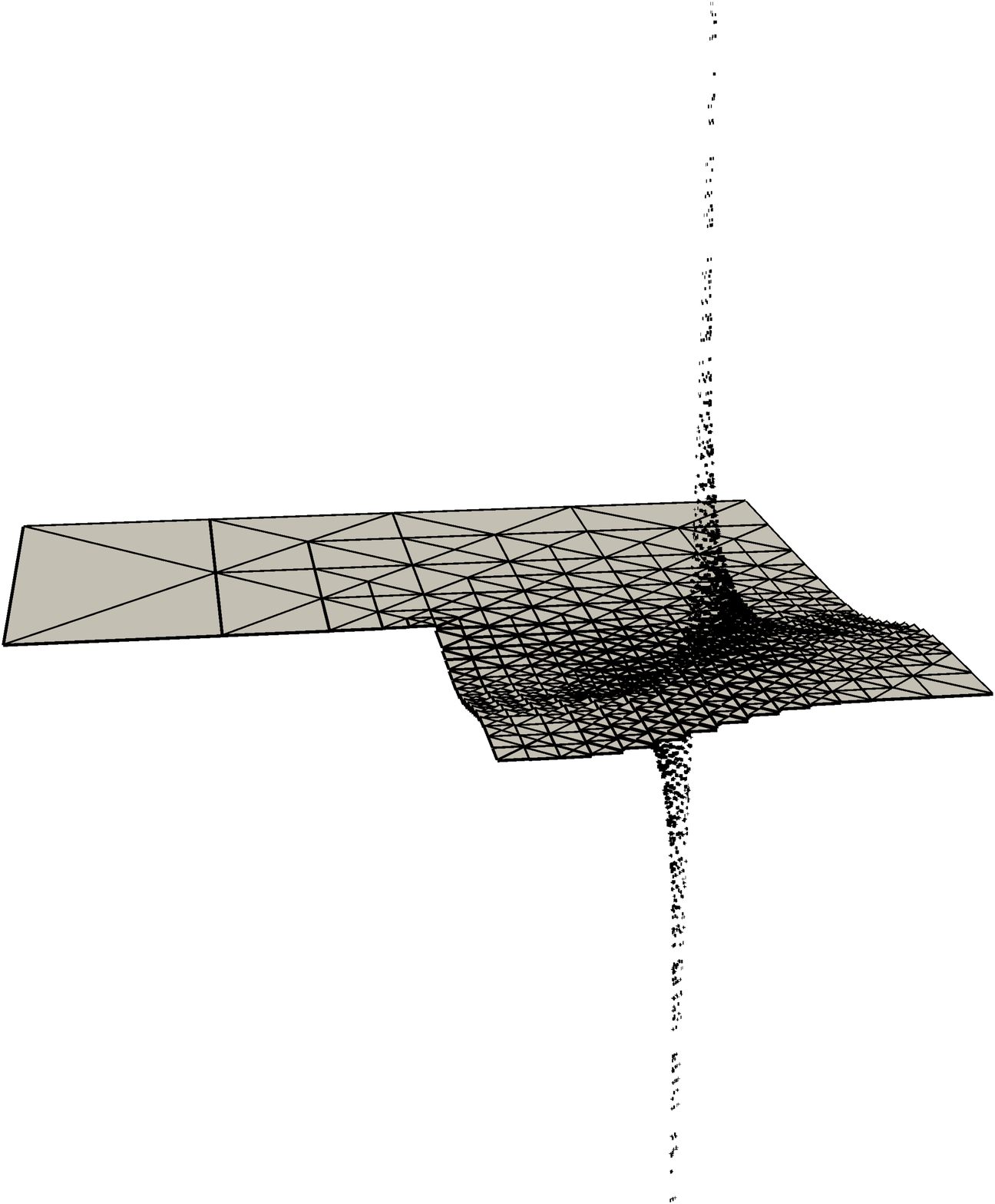} \\
\qquad \tiny{(C)}
\end{minipage}
\\~\\
\begin{minipage}[b]{0.3\textwidth}\centering
\includegraphics[trim={0 0 0 0},clip,width=3.0cm,height=2.8cm,scale=0.5]{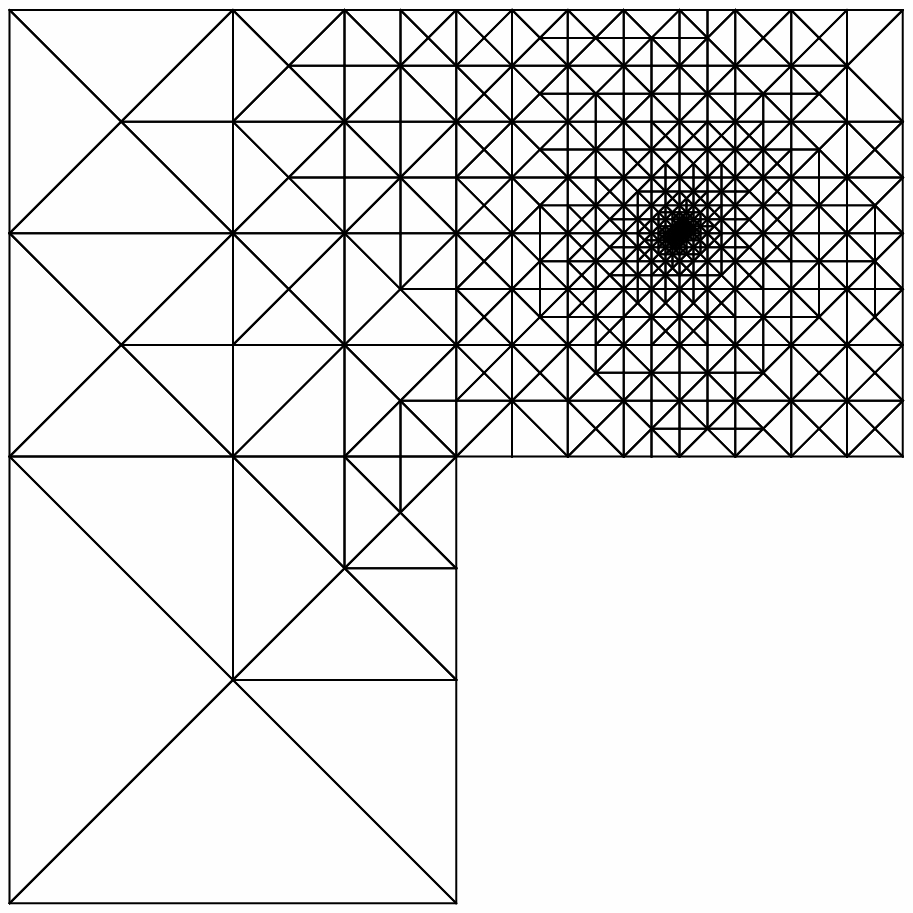} \\
\qquad \tiny{(D)}
\end{minipage}
\begin{minipage}[b]{0.3\textwidth}\centering
\includegraphics[trim={0 0 0 0},clip,width=3.0cm,height=2.8cm,scale=0.5]{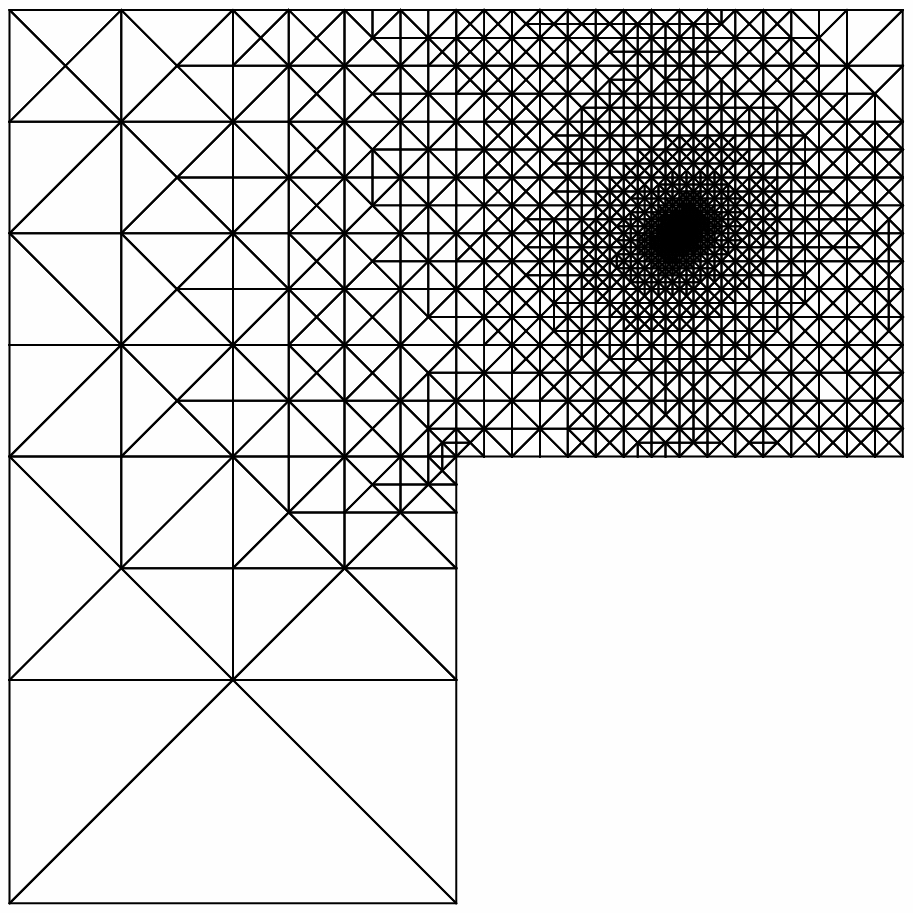} \\
\qquad \tiny{(E)}
\end{minipage}
\caption{Example 4: Experimental rate of convergence for the error estimator $\zeta_{\text{stab},p}$ (A), the finite element approximation of $|\boldsymbol{u}_{\mathscr{T}}|$ (B) and $\pi_{\mathscr{T}}$ (C) obtained on the 35th adaptively refined mesh and the meshes obtained after 30 (D) and 40 (E) iterations of the adaptive loop ($p = 1.4$).}
\label{fig:ex_4}
\end{figure}

~\\
\textbf{Example 5 (Convex domain).} We let $p=1.1$, $\Omega=(0,1)^3$,
\begin{align*}
\mathcal{D} = \left\{ (0.75,0.25,0.5), (0.25,0.75,0.5), (0.75,0.25,0.75), (0.25,0.75,0.75),\right. \\
\left. (0.25,0.25,0.75), (0.25,0.25,0.5), (0.5,0.5,0.5), (0.75,0.75,0.5) \right\},
\end{align*}
and
\begin{align*}
\boldsymbol{f}_{(0.75,0.25,0.5)}&=\boldsymbol{f}_{(0.25,0.75,0.75)}=\boldsymbol{f}_{(0.5,0.5,0.5)}=(1,1,1), \\
\boldsymbol{f}_{(0.25,0.75,0.5)}&=\boldsymbol{f}_{(0.75,0.25,0.75)}=\boldsymbol{f}_{(0.75,0.75,0.5)}=(-5,-5,-5),\\
\boldsymbol{f}_{(0.25,0.25,0.75)}&=(-1,-1,-1), \quad  \boldsymbol{f}_{(0.25,0.25,0.5)}=(5,5,5).
\end{align*}

In Figure \ref{fig:ex-5.1} we report the results obtained for Example 5. We observe in subfigure (A), that an optimal experimental rate of convergence for the total error estimator $\zeta_{\text{stab},p}$ is attained. On the other hand, it is clear in the adaptively refined mesh (B), that the adaptive refinement is mostly concentrated on the points $t\in\mathcal{D}$ where the Dirac measures are supported.

\begin{figure}[!ht]
\centering
\psfrag{error total}{{\huge $\|(\mathbf{e}_{\boldsymbol{u}},e_{\pi})\|_{\mathcal{X}}$}}
\psfrag{estimador}{{\huge $\zeta_{\text{stab},p}$}}
\psfrag{O(Ndofs-1)}{\huge$\mathsf{Ndof}^{-1}$}
\psfrag{O(Ndofs-1222)}{\huge$\mathsf{Ndof}^{-1/3}$}
\psfrag{Ndofs}{{\huge $\mathsf{Ndof}$}}
\psfrag{IEE}{{\huge $\mathscr{I}$}}
\begin{minipage}[c]{0.35\textwidth}
\centering
\psfrag{Example 3 - p11}{\huge{Estimator $\zeta_{\text{stab},p}$ for $p = 1.1$}}
\includegraphics[trim={0 0 0 0},clip,width=3.2cm,height=3.4cm,scale=0.62]{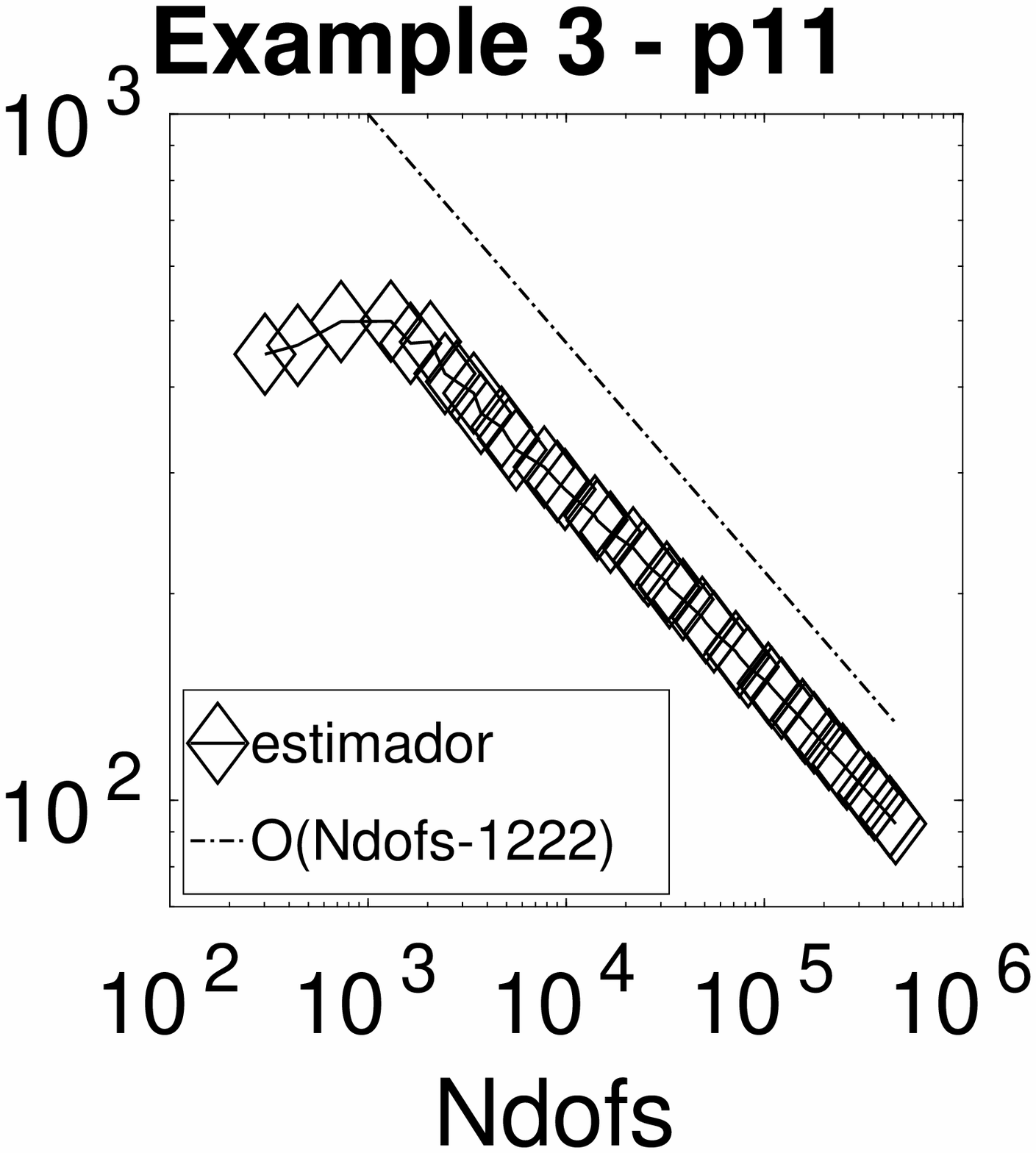} \\
\tiny{(A)}
\end{minipage}
\begin{minipage}[c]{0.35\textwidth}
\centering
\includegraphics[trim={0 0 0 0},clip,width=3.2cm,height=3.4cm,scale=0.57]{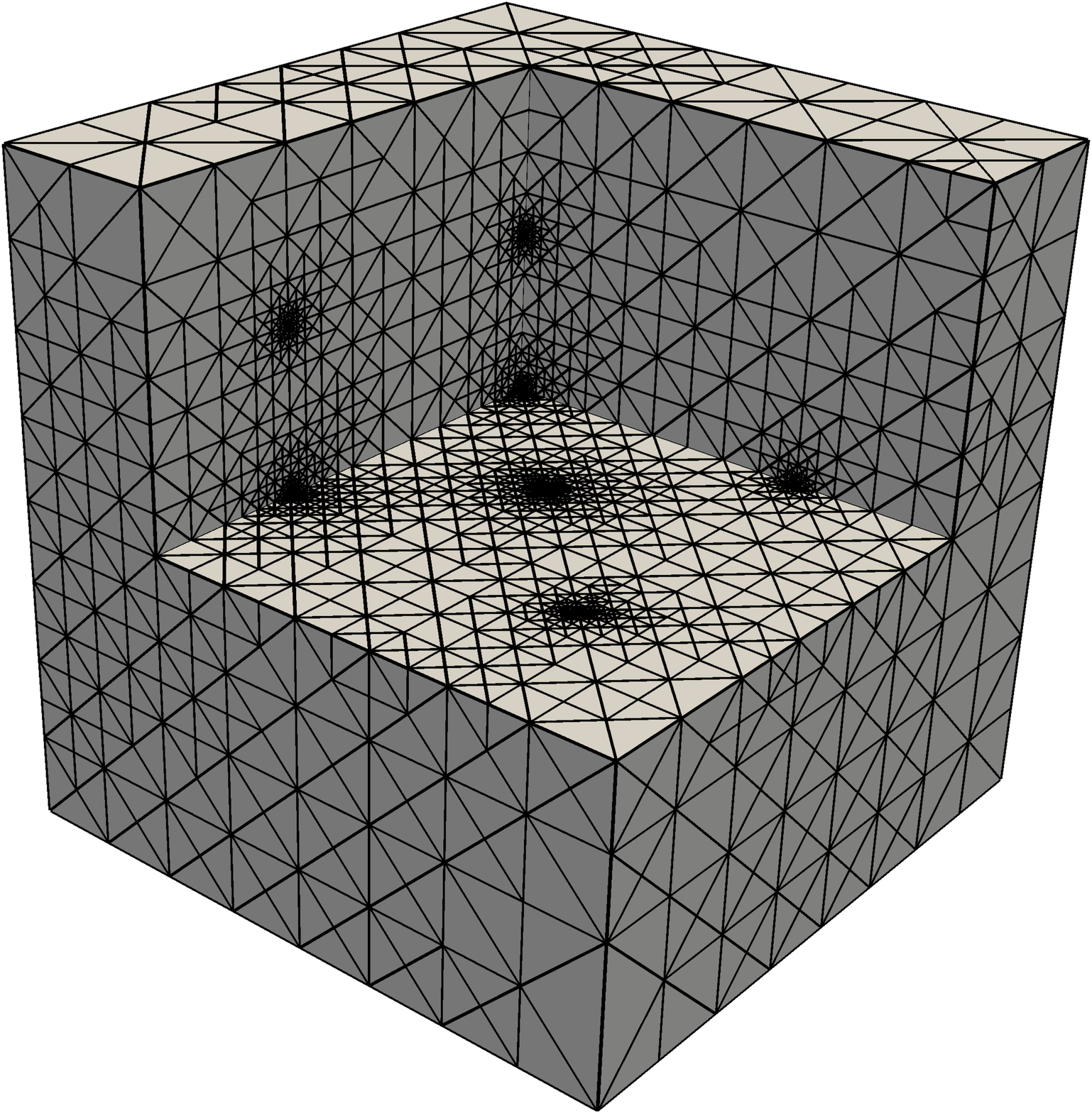} \\
\hspace{-0.8cm}\tiny{(B)}
\end{minipage}
\caption{Example 5: Experimental rates of convergence for the error estimator $\zeta_{\text{stab},p}$ (A) and the 35th adaptively refined mesh (B) ($p = 1.1$).}
\label{fig:ex-5.1}
\end{figure}

\subsection{Conclusions.}\label{sec:conclusions.}

We present the following conclusions.
    
\begin{itemize}
         
\item[$\bullet$] Most of the refinement occurs near to where the Dirac measures are located. This attests to the efficiency of the devised estimators. When the domain involves geometric singularities, refinement is also being performed in regions that are close to them. This shows a competitive performance of the a posteriori error estimators.
 
\item[$\bullet$] The numerical experiments suggest that a small value of $p$ delivers the best results.

\item[$\bullet$] In spite of the very singular nature of the problem \eqref{def:Stokes_singular_rhs}, our proposed estimators are able to deliver optimal experimental rates of convergence within an adaptive loop. 
\end{itemize}



%
%

\footnotesize
\bibliographystyle{siam} 
\bibliography{bib_FLOQ}   

\end{document}